\newtheorem{theo}{Theorem}
\newtheorem{lemme}[theo]{Lemma}
\newtheorem{remi}{Remark}
\def\e{{\mathbb E}}
\def\1{{\mathbb 1}}
\def\cov{\mathop{\rm Cov}\nolimits}    %covariance
\def\vec{\mathop{\rm vec}\nolimits}  %vec
\begin{document}
%\doublespacing
\begin{frontmatter}
%\title{Distribution of normalized residual autocorrelations in multivariate ARMA models with dependent errors\\[0.5cm]
%}
\title{Diagnostic checking in multivariate ARMA models with dependent errors using normalized residual autocorrelations\\[0.5cm]
}
%\title{Modified portmanteau tests for ARMA models with uncorrelated but non-independent error
%terms : a self-normalisation approach\\[0.5cm]
%}
%\runtitle{Diagnostic checking in weak multivariate ARMA models}
%\thankstext{T1}{Footnote to the title with the `thankstext' command.}

\author[Yacouba]{Y. Boubacar Ma{\"\i}nassara}
\address[Yacouba]{Universit\'e Bourgogne Franche-Comt\'e, \\
Laboratoire de math\'{e}matiques de Besan\c{c}on, \\ UMR CNRS 6623, \\
16 route de Gray, \\ 25030 Besan\c{c}on, France.}
\ead{mailto:yacouba.boubacar\_mainassara@univ-fcomte.fr}

\author[Yacouba]{B. Saussereau}
\ead{mailto:bruno.saussereau@univ-fcomte.fr}

%
%
%%%%  For blind version
%\begin{aug}
%\author{....}
%\runauthor{...}
%\affiliation{...}
%
%\address{...}
%\end{aug}
%\vspace{0.5cm}

%
\begin{abstract}
In this paper we derive the asymptotic distribution of normalized
residual empirical autocovariances and autocorrelations under weak
assumptions on the noise. We propose  new portmanteau statistics for vector autoregressive moving-average (VARMA) models with uncorrelated but non-independent innovations by using a self-normalization approach. We establish the asymptotic distribution of the proposed statistics. This asymptotic distribution is quite different from the usual chi-squared approximation  used under the independent and identically distributed assumption on the noise,  or the weighted sum of independent chi-squared random variables obtained under nonindependent innovations.  A set of Monte Carlo experiments and an application to the daily returns of the CAC40 is presented.
\end{abstract}
%\begin{keyword}[class=AMS]
%\kwd[Primary ]{62M10}
%\kwd{62F03}
%\kwd{62F05}
%\kwd[; secondary ]{91B84}
%\kwd{62P05}
%\end{keyword}
\begin{keyword}
Goodness-of-fit test, quasi-maximum likelihood estimation, Box-Pierce and Ljung-Box portmanteau tests,
residual autocorrelation, self-normalization, weak (V)ARMA models
\end{keyword}
\end{frontmatter}
%************************************************
%************************************************
%************************************************
%************************************************
%
\section{Introduction}
In econometric application, the univariate autoregressive moving-average (ARMA) framework is very restrictive. Consequently the class of vector autoregressive moving-average (VARMA) models are commonly used in time series analysis and econometrics. It describes the possible
cross-relationships between the time series and not only the properties of the individual time series (see \cite{L2005}). Moreover,  the time series literature shows growing interest in non-linear models. Roughly speaking, these models are those on which the independence assumption on the noise is relaxed. This motivates the framework of our investigations that we further describe below.

We consider a $d$-dimensional stationary process $X=(X_t)_{t\in\mathbb Z}$
that satisfies a VARMA$(p,q)$ representation of the form
\begin{equation}
\label{VARMA}
A_{00}{X}_t-\sum_{i=1}^pA_{0i}{X}_{t-i}=B_{00}{\epsilon}_t-\sum_{i=1}^qB_{0i}{\epsilon}_{t-i},\quad
\forall t\in \mathbb Z.
\end{equation}
Without loss of generality, one may assume that $A_{0p}$ and $B_{0q}$ are not both equal to the null matrix.
The representation (\ref{VARMA}) is said to be a VARMA$(p,q)$ representation if the noise process ${\epsilon}=({\epsilon}_t)_{t\in\mathbb Z}$ with  ${\epsilon}_t=(\epsilon_{1t},\dots,\epsilon_{dt})'$ is a multivariate \emph{weak white noise}, that is, if it satisfies
\begin{itemize}
\item[\hspace*{1em} {\bf (A0):}]
\hspace*{1em} $\mathbb{E}({{\epsilon}}_t)=0$,
$\mbox{Var}\left({{\epsilon}}_t\right)=\Sigma_0$, and
$\mbox{Cov}\left({{\epsilon}}_t,{{\epsilon}}_{t-h}\right)=0$ for all
$t\in\mathbb{Z}$ and all $h\neq 0$.
\end{itemize}
We also assume that  $\Sigma_0$ is non singular.
It is customary to say that $({X}_t)_{t\in \mathbb Z}$ is a
strong VARMA$(p,q)$ model if $(\epsilon_t)_{t\in \mathbb Z} $ is  an  independent and identically distributed (iid for short) sequence of random vectors with zero mean and non singular variance matrix (\emph{i.e. strong white noise}). %A strong white noise is obviously  a weak white
%noise, because independence entails uncorrelatedness, but the
%reverse is not true. Between weak and strong VARMA$(p,q)$ representations, one can
%say that (\ref{VARMA}) is a semi-strong VARMA$(p,q)$ representation if $({{\epsilon}}_t)$ is a stationary martingale
%difference (\emph{i.e. semi-strong white noise}).
When $A_{00}=B_{00}=I_d$, where $I_d$ denotes the identity matrix of order $d$, the VARMA$(p,q)$ representation (\ref{VARMA}) is said to be in reduced form. Otherwise, it is said to be structural. The structural forms are mainly used in econometrics to introduce instantaneous relationships between economic variables.  Of course, constraints are necessary for the identifiability of these representations.

The estimation of ARMA models is however much more difficult in the multivariate case than in univariate case. A first difficulty is that non trivial constraints on the parameters must be imposed for the identifiability of the parameters (see \cite{L2005}). Secondly, the implementation of standard estimation methods (for instance the
Gaussian quasi-maximum likelihood estimation) is not obvious because
this requires a constrained high-dimensional optimization (see \cite{L2005} for further details).
% and Kascha, 2007, for a
%numerical comparison of alternative estimation methods of VARMA
%models).
These technical difficulties certainly explain why the vector autoregressive (VAR for short) models are much more used than VARMA in applied context. This is also the reason why the statistical analysis of weak ARMA model is mainly limited to the univariate framework (see \cite{fz05}, for a review on weak univariate ARMA models).

After estimating the (V)ARMA process, the next important step in the (V)ARMA modeling consists in checking if the estimated model fits satisfactorily the data. This adequacy checking step validates or invalidates the choice of the orders $p$ and $q$. In VARMA$(p,q)$ models, the choice of $p$ and $q$ is particularly important as the number of parameters quickly increases with $p$ and $q$, which entails statistical difficulties.

In 1970, Box and Pierce  have proposed  (see \cite{bp70}) a goodness-of-fit test, the so-called portmanteau test, for univariate
strong ARMA models. A modification of their test has been proposed by Ljung and Box (see \cite{lb}).  It is nowadays one of the
most popular diagnostic checking tools in ARMA modeling of time series. Both of these tests are based on the residual empirical autocorrelations $\hat\rho(h)$ (see Section \ref{notat} for a precise notation) and they are defined by
\begin{equation}\label{bp}
 Q^{\textsc{bp}}_m=n\sum_{h=1}^m\hat\rho^2(h) \text{ and }   {Q}_m^{\textsc{lb}}=n(n+2)\sum_{h=1}^m\frac{\hat\rho^2(h)}{n-h},
\end{equation}
where $n$ is the length of the series and $m$ is a fixed integer.
The  statistic ${Q}_m^{\textsc{lb}}$ has the same asymptotic distribution as $Q_m^{\mathrm{BP}}$ and has the reputation of doing better for small or medium sized sample (see \cite{lb}).

Being inspired by the univariate portmanteau statistics defined in \eqref{bp},  Chitturi (see \cite{C1974}) and Hosking (see \cite{H1980})
have introduced  the following multivariate versions of the  portmanteau
statistics
\begin{eqnarray}\label{bpmulti}
{{Q}}_m^{\textsc{c}}=n\sum_{h=1}^m\mbox{Tr}\left(\hat\Gamma'(h)\hat\Gamma^{-1}(0)
\hat\Gamma(h)\hat\Gamma^{-1}(0)\right)\text{ and
}{{Q}}_m^{\textsc{h}}=\sum_{h=1}^m\frac{n^2}{(n-h)}\mbox{Tr}\left(\hat\Gamma'(h)\hat\Gamma^{-1}(0)
\hat\Gamma(h)\hat\Gamma^{-1}(0)\right),
 \end{eqnarray}
 where $\hat\Gamma(h)$ is the residual autocovariances matrices function of
 the multivariate process $X$, $\mbox{Tr}(\cdot)$ denotes the trace of a matrix and  $A'$ the transpose of a matrix $A$.

Under the assumption that the  noise sequence is iid, the standard test procedure consists in rejecting the null hypothesis of a (V)ARMA$(p,q)$ model if the statistics (\ref{bp}) or (\ref{bpmulti}) are larger than a certain quantile of a chi-squared distribution.

Henceforth, we deal with some models with uncorrelated but dependent noise process $\epsilon$, the so-called weak noise. For such models, the asymptotic distributions of the statistics defined in (\ref{bp}) or (\ref{bpmulti})  are no longer chi-square distributions but a mixture of chi-squared distributions, weighted by eigenvalues of  the asymptotic covariance matrix of the vector of autocorrelations (see \cite{frz,yac}).
Consequently, in order to obtain the asymptotic distribution of the portmanteau statistics under weak assumptions on the noise, one needs a consistent estimator of the asymptotic covariance matrix of the residual autocorrelations vector.  In the econometric literature the nonparametric kernel estimator, also called heteroscedastic autocorrelation consistent  estimator (see \cite{a_econ,newey}), is widely used to estimate  covariance matrices. However, this causes serious difficulties regarding the choice of the sequence of weights. An alternative  method consists in using a parametric autoregressive  estimate of the spectral density of
a stationary process. This approach, which has been studied in \cite{berk,haan},  is also facing the problem of choosing the truncation  parameter. Indeed, this method is based on an infinite autoregressive representation of the stationary process. So the choice of the order of truncation is crucial and difficult.
The methodology employed in \cite{frz} (an extension is proposed in \cite{yac} for VARMA models) presents these difficulties: it supposes to weight appropriately some empirical fourth-order moments by means of a window and a truncation point.
Recently, Shao generalized in \cite{s2011ET}, the test statistic based on the kernel-based spectral proposed by \cite{H1996}.
%Its  asymptotic distribution depends on $m=m(n)$ and in which the fourth cumulant of the noise become %asymptotically negligible.
However, this test is also confronted to the problem of the choice of the bandwidth parameter.
Zhu and Li \cite{zl2015JE} also proposed a bootstrapped spectral test for checking the adequacy of weak ARMA models, in  which the limiting distribution depends on the unknown data generating process. They used a block-wise random weighting method to bootstrap their critical values which also  need the choice of the block size. When the noise process is observable, Lobato, Nankervis and Savin \cite{lobatoNS2001,lobatoNS2002} address the problem of testing the null hypothesis that a time series is uncorrelated up to some fixed order and propose an extension of the Box-Pierce statistic.
%Recently, \cite{s2011ET} proposed an extending test statistic as in \cite{H1996} whose  asymptotic distribution depends on $m=m(n)$ and in which the fourth cumulant of the noise become asymptotically negligible.
%Zhu and Li \cite{zl2015JE} also proposed a bootstrapped spectral test for checking the adequacy of ARMA models with dependent noise  which limiting distribution depends on the unknown data generating process. They used a block-wise random weighting method to bootstrap their critical values which also  need the choice of the block size.

In this work, we  propose an alternative method where we do not estimate an asymptotic covariance matrix. It is based on a self-normalization based approach to construct a new test-statistic which is asymptotically distribution-free under the null hypothesis. The idea comes from Lobato (see \cite{lobato}) and has been already extended by \cite{kl2006,s2010JRSSBa,s2010JRSSBb,shaox} to more general frameworks. See also \cite{s2016} for a review on some recent developments on the inference of time series data using the self-normalized approach.

In our opinion, there are two major contributions in this work. The first one is to show that the Lobato test statistic can be extended to the residuals of weak ARMA and VARMA models.
The second one is to improve the results concerning the statistical analysis of weak ARMA and  VARMA
models by considering the self-normalization approach for the adequacy problem. Notice that the new tests can replace the standard ones when testing strong ARMA and VARMA models.

We briefly give some details on the test statistic that we introduce in this article. We denote by $A\otimes B$ the Kronecker product of two matrices $A$ and $B$.
Our new test-statistics, for VARMA  models, are defined by
\begin{equation}
%& Q^{\mathrm{BP}}_m=n\sum_{h=1}^m\hat\rho^2(h)\ .
%& Q_m^{\textsc{bp}}=n\sum_{h=1}^m\hat\rho^2(h)\text{ and }  \\
%{Q}_m^{\textsc{sn}}= n\, \hat\sigma^4\,\hat\rho_m'\hat C_m^{-1}\hat\rho_m %\text{ and }
{{Q}}_{m}^{\textsc{sn}}=
n\,\hat{{\rho}}_m'\left\{I_m\otimes(\hat S_e\otimes \hat S_e)\right\}
\hat C_{md^2}^{-1}\left\{I_m\otimes(\hat S_e\otimes \hat S_e)\right\}\hat{{\rho}}_m,  \label{bms}
\end{equation}
where $\hat C_{md^2}^{-1}$ is a normalization matrix (that is defined later in (\ref{hatcmVARMA})) and $\hat{S_e}$ is a diagonal matrix in which the $i$-th element is the sample estimate of the variance of the $i$-th coordinate of the multivariate noise process. The vector of the first $m$  sample autocorrelations is denoted by $\hat\rho_m$. We prove in Theorem \ref{sn2VARMA} that the asymptotic distribution of $Q_m^{\textsc{sn}}$ is the distribution of a random variable $\mathcal U_{md^2}$ depending on $md^2$ and is independent of all the parameters of the model. It has an explicit expression by means of Brownian bridges but its law is not explicitly known.
Nevertheless it can be easily tabulated by Monte-Carlo experiments
(see Table 1 in \cite{lobato}). We emphasize the fact that in
\cite{frz,yac} the authors have proposed some modified versions of
the Box-Pierce and Ljung-Box statistics that are more difficult to
implement because their critical values have to be computed from the
data.  In our case, the critical values are not
computed from the data since they are tabulated. In some sense,  our
method is finally closer to the standard method in which the critical values are simply
deduced from a $\chi^2$-table.

In Monte Carlo experiments, we illustrate that the proposed test statistics  have reasonable finite sample performance. Under nonindependent errors, it appears that the standard test statistics are generally non reliable, overrejecting or underrejecting severely, while the proposed tests statistics offer satisfactory levels in most cases. Even for independent errors, they seem  preferable to the standard ones, when the number $m$ of autocorrelations is small. Moreover, the error of first kind is well controlled.
Concerning the relative powers of the proposed tests, we also show
that the proposed tests have similar powers than the standard ones when the critical values are adjusted and when the sample size is large.
For all these reasons, we think that the modified versions that we propose in this paper are preferable to the standard ones for diagnosing VARMA models under nonindependent errors.

The article is organised as follows. In the next section, we briefly
present the models that we consider here and summarize the results
on the quasi-maximum likelihood estimator (QMLE) asymptotic distribution obtained by \cite{yac2}. Our methodology and the main results
%(Theorems \ref{sn1VARMA},\dots,\ref{sn2})
are given in Section \ref{result}.
Simulation studies and an illustrative application on real data are presented in  Section~\ref{ne} and Section~\ref{ill}. The numerical tables are gathered in Section \ref{table}, after the bibliography. The proofs of the main results are available in the extended online version of this paper.

%We are now able to state the following theorems, which are the main results of our work.
\section{Parametrization and assumptions}
\label{model}
The structural VARMA$(p,q)$ representation $(\ref{VARMA})$ can be rewritten in a reduced VARMA$(p,q)$ form
if the matrices $A_{00}$ and $B_{00}$ are non singular.
Indeed, premultiplying $(\ref{VARMA})$ by $A_{00}^{-1}$ and introducing
the innovation process $ {e}=({e}_t)_{t\in\mathbb Z}$ with ${e}_t=A_{00}^{-1}B_{00}{\epsilon}_t$, with non singular variance $\Sigma_{e0}=A_{00}^{-1}B_{00}\Sigma_0 B_{00}'A_{00}^{-1 '}$, we obtain the reduced VARMA representation
\begin{equation}
\label{VARMASTANDARD}
{X}_t-\sum_{i=1}^pA_{00}^{-1}A_{0i}{X}_{t-i}={e}_t-\sum_{i=1}^qA_{00}^{-1}B_{0i}B_{00}^{-1}A_{00}{e}_{t-i}.
\end{equation}

The structural form (\ref{VARMA}) allows to handle seasonal models, instantaneous economic relationships, VARMA in the so-called echelon form representation, and many other constrained VARMA representations (see \cite{L2005}, chap. 12). The reduced form (\ref{VARMASTANDARD}) is more practical from a statistical viewpoint as it gives the forecasts of each component of ${X}$ according to the past values of the set of the components.

Let $\left[A_{00}\dots A_{0p} B_{00}\dots B_{0q}\right]$ be the $d\times (p+q+2)d$ matrix of vector antoregressive and moving average
coefficients involved in %the VARMA equation
$(\ref{VARMA})$. The matrix $\Sigma_0$ is considered as a nuisance parameter.
The parameter of interest is denoted $\vartheta_0$, where $\vartheta_0$
belongs to the interior of the parameter space ${\Theta}\subset
\mathbb{R}^{k_0}$, and $k_0$ is the number of unknown parameters,
which is typically much smaller than $(p+q+2)d^2$. Without loss of generality, we assume that ${\Theta}$ is compact. The matrices
$A_{00},\dots A_{0p}, B_{00},\dots B_{0q}$ involved in
(\ref{VARMA}) and $\Sigma_0$ are specified by $\vartheta_0$. More
precisely, we write $A_{0i}=A_{i}(\vartheta_0)$ and $B_{0j}=B_{j}(\vartheta_0)$ for $i=0,\dots,p$ and $j=0,\dots,q$, and
$\Sigma_0=\Sigma(\vartheta_0)$. The VARMA model (\ref{VARMA}) can be written more compactly as $A_{\vartheta_0}(L){X}_t=B_{\vartheta_0}(L){e}_t$ where $A_{\vartheta_0}(L)=A_{00}-\sum_{i=1}^pA_{0i}L^i$ and $B_{\vartheta_0}(L)=B_{00}-\sum_{i=1}^qB_{0i}L^i$, where $L$ stands for the backshift operator. Thus for all $\theta\in\Theta$, we have
\begin{align}\label{e(vartheta)}
e_t(\vartheta) = B_{\vartheta}^{-1}A_{\vartheta} X_t = \sum_{i=0}^\infty c_i(\vartheta)X_{t-i}
\end{align}
with $c_0(\vartheta)=1$ and $e_t(\vartheta_0)=e_t$.

For the estimation of ARMA and VARMA models, the commonly used
estimation method is the quasi-maximum likelihood estimation (QMLE for short), which can be also viewed as a nonlinear least squares estimation (LSE). The asymptotic properties of the QMLE of VARMA models are well-known under the restrictive assumption that the noise $\epsilon$ is an iid sequence (see \cite{DufourJouini2014,L2005}). See also \cite{MelardRoySaidi2006} where a description of estimation of structured VARMA models allowing unit roots is proposed.

Given a realization ${X}_1,{X}_2,\dots,{X}_n$ satisfying (\ref{VARMA}), the variable ${e}_t(\vartheta)$ can be approximated, for $0<t\leq n,$ by $\tilde{e}_t(\vartheta)$  defined recursively by
\begin{align}\label{etilde}
\tilde{e}_t(\vartheta)& ={X}_t-\sum_{i=1}^pA_0^{-1}A_i{X}_{t-i}+
\sum_{i=1}^qA_0^{-1}B_iB_0^{-1}A_0\tilde{e}_{t-i}(\vartheta),
\end{align}
where the unknown initial values are set to zero:
$\tilde{e}_{0}(\vartheta)=\dots=\tilde{e}_{1-q}(\vartheta)={X}_0=\dots={X}_{1-p}=0$.
The Gaussian quasi-likelihood is given by
\begin{eqnarray*}
\tilde{\mathrm{L}}_n(\vartheta,\Sigma_e)=\prod_{t=1}^n\frac{1}{(2\pi)^{d/2}\sqrt{\det
\Sigma_e}}\exp\left\{-\frac{1}{2}\tilde{e}_t'(\vartheta)
\Sigma_e^{-1}\tilde{e}_t(\vartheta)\right\},\quad
\Sigma_e=A_0^{-1}B_0\Sigma B_0'A_0^{-1 '}.
\end{eqnarray*}
A QMLE of $(\vartheta,\Sigma_e)$ is a measurable solution
$(\hat{\vartheta}_n,\hat\Sigma_e)$ of
\begin{eqnarray*}(\hat{\vartheta}_n,\hat\Sigma_e)
=\arg\min_{\vartheta,\Sigma_e}\left\{
\log(\det\Sigma_e)+\frac{1}{n}\sum_{t=1}^n\tilde{e}_t(\vartheta)
\Sigma_e^{-1}\tilde{e}'_t(\vartheta)\right\}.
\end{eqnarray*}
In the univariate weak case, Francq and Zako\"{i}an  \cite{fz98} showed the asymptotic normality of the least squares estimator under mixing assumptions. The asymptotic behavior of the QMLE,  of structural VARMA models,  has been studied in a much wider context by \cite{yac2} who proved  consistency and asymptotic normality under weak noise process.
%To ensure the consistency and the asymptotic normality of the LSE,
%we need additional assumptions on the noise that are similar to those made %by \cite{fz98,fz05}.
To ensure the consistency and the asymptotic normality of the QMLE, we assume that the parametrization satisfies the following smoothness
conditions as in \cite{yac2}.
\begin{itemize}
\item[\hspace*{1em} {\bf (A1):}]
\hspace*{1em} The functions $\vartheta\mapsto A_{i}(\vartheta)$
$i=0,\dots,p$, $\vartheta\mapsto B_{j}(\vartheta)$ $j=0,\dots,q$ and
$\vartheta\mapsto \Sigma(\vartheta)$ admit continuous third order
derivatives for all $\vartheta\in {\Theta}$.
\end{itemize}
For simplicity, we now write $A_i$, $B_j$ and $\Sigma$ instead of
$A_i(\vartheta)$, $B_j(\vartheta)$ and $\Sigma(\vartheta)$. Let
$A_{\vartheta}(z)=A_0-\sum_{i=1}^pA_iz^i$ and
$B_{\vartheta}(z)=B_0-\sum_{i=1}^qB_iz^i$. We denote by $\det(A)$ the determinant of a matrix $A$ and by $\mbox{vec}(A)$ the vector obtained by stacking the columns of $A$. We assume that ${\Theta}$
corresponds to stable and invertible representations, namely
\begin{itemize}
\item[\hspace*{1em} {\bf (A2):}]
\hspace*{1em} for all $\vartheta\in{\Theta}$, we have $\det
A_{\vartheta}(z)\det B_{\vartheta}(z)\neq 0$ for all $|z|\leq 1$.
\end{itemize}
%To ensure the strong consistency of the QMLE, a compactness assumption is required.
%\begin{itemize}
%\item[\hspace*{1em} {\bf (B3):}]
%\hspace*{1em} We have $\vartheta_0\in\boldsymbol{\Theta}$, where
%$\boldsymbol{\Theta}$ is compact.
%\end{itemize}
%and where $L$ is the backward operator.
%\begin{itemize}
%\item[\hspace*{1em} {\bf (B4):}]
%\hspace*{1em} The process $(\boldsymbol{\epsilon}_t)$ is strictly stationary and ergodic.
%\end{itemize}
%Note that ${\bf (B4)}$ is entailed by the uncorrelated innovations, but not by the iid innovations. In view of (\ref{VARMAcompact}),
% $\mathbf{X}_t=A_{\theta_0}^{-1}(L)B_{\theta_0}(L)\boldsymbol{\epsilon}_t$
%and $\boldsymbol{\epsilon}_t=B_{\vartheta_0}^{-1}(L)A_{\vartheta_0}(L)\mathbf{X}_t$, $(\boldsymbol{\epsilon}_t)$ can be replaced by $(\mathbf{X}_t)$ in ${\bf (B4)}$.
In the structural VARMA model (\ref{VARMA}), the assumption {\bf (A2)} does not guarantee the identifiability of the parameter. Thus, we make the following  global assumption for all $\vartheta\in{\Theta}$.
\begin{itemize}
\item[\hspace*{1em} {\bf (A3):}]
\hspace*{1em} For all $\vartheta\in{\Theta}$ such that $\vartheta\neq
\vartheta_0$, either the transfer functions
$A_0^{-1}B_0B_{\vartheta}^{-1}(z)A_{\vartheta}(z)\neq A_{00}^{-1}B_{00}B_{\vartheta_0}^{-1}(z)A_{\vartheta_0}(z)$
for some $z\in \mathbb{C}$, or
$A_0^{-1}B_0\Sigma
B_0'A_0^{-1 '}\neq A_{00}^{-1}B_{00}\Sigma_0 B_{00}'A_{00}^{-1 '}.$
\end{itemize}
\begin{itemize}
\item[\hspace*{1em} {\bf (A4):}]
\hspace*{1em}  {The process $e=(e_t)_{t\in\mathbb Z}$ is  ergodic and strictly  stationary}. Moreover, $e_t$ has a positive density on some neighborhood of zero.
\end{itemize}
In the reduced VARMA representation (\ref{VARMASTANDARD}), the condition $A_0^{-1}B_0\Sigma
B_0'A_0^{-1 '}\neq A_{00}^{-1}B_{00}\Sigma_0 B_{00}'A_{00}^{-1 '}$ in {\bf (A3)} can be dropped but may be important for structural VARMA forms. In particular, {\bf (A3)}  is satisfied when we impose that
$A_0=B_0=I_d$, that {\bf (A2)} holds, that the common left divisors of $A_{\vartheta}(L)$ and $B_{\vartheta}(L)$ are unimodular ({\em i.e.} with nonzero constant determinant), and finally that the matrix $[A_{p}:B_{q}]$ is of full rank. In contrast, the echelon form guarantees uniqueness of the VARMA representation and is the most widely identified VARMA representation employed in the literature. The identifiability of VARMA processes has been studied in \cite{H1976} where the author has given several procedures ensuring identifiability.

For the asymptotic normality of the QMLE, additional assumptions are required. It is necessary to assume
that $\vartheta_0$ is not on the boundary of the parameter space
${\Theta}$.
\begin{itemize}
\item[\hspace*{1em} {\bf (A5):}]
\hspace*{1em} We have
$\vartheta_0\in\stackrel{\circ}{{\Theta}}$, where $\stackrel{\circ}{{\Theta}}$ denotes the interior of ${\Theta}$.
\end{itemize}
At last, we use the matrix $M_{\vartheta_0}$ of the coefficients of the
reduced form (\ref{VARMASTANDARD}), where
$$M_{\vartheta_0}=[A_{00}^{-1}A_{01}:\dots:A_{00}^{-1}A_{0p}:A_{00}^{-1}B_{01}
B_{00}^{-1}A_{00}:\dots:A_{00}^{-1}B_{0q}B_{00}^{-1}A_{00}].$$
We need a local identifiability assumption which completes {\bf (A3)} and specifies how the matrix $M_{\vartheta_0}$ depends on the parameter $\vartheta_0$.
\begin{itemize}
\item[\hspace*{1em} {\bf (A6):}] \hspace*{1em} The matrix $\mathbf{M}_{\vartheta_0}$, defined as ${\partial \mbox{vec}(M_{\vartheta})}/{\partial \vartheta'}$
evaluated at $\vartheta_0$, is of full rank $k_0$.
\end{itemize}
We recall that the strong mixing coefficients $\alpha_{e}(h)$ of the stationary process $e$ are defined by
$$\alpha_{e}\left(h\right)=\sup_{A\in\mathcal F^t_{-\infty},B\in\mathcal F_{t+h}^{+\infty}}\left|\mathbb{P}\left(A\cap
B\right)-\mathbb{P}(A)\mathbb{P}(B)\right| ,$$
where $\mathcal F_{-\infty}^t=\sigma (e_u, u\leq t )$ and $\mathcal F_{t+h}^{+\infty}=\sigma (e_u, u\geq t+h )$.
We denote $\|\cdot\|$ the Euclidian norm. We will make an integrability assumption on the moment of the noise and a summability condition on the strong mixing coefficients $(\alpha_{e}(k))_{k\ge 0}$.
\begin{equation}
\label{mixing}
\hspace{-3cm}{\bf (A7):}\hspace*{1.5em}
\text{We have }\mathbb{E}\big\|{e}_t\|^{4+2\nu}<\infty\text{ and }
\sum_{k=0}^{\infty}\left\{\alpha_{{e}}(k)\right\}^{\frac{\nu}{2+\nu}}<\infty\text{ for some } \nu>0.
\end{equation}
Under  the above assumptions,  Boubacar Ma\"{i}nassara and Francq \cite{yac2} showed
that $\hat{\vartheta}_n\to\vartheta_0\;a.s.$ as $n\to\infty$ %(the consistency)
and $\displaystyle \sqrt{n}(\hat{\vartheta}_n-\vartheta_0) \xrightarrow[n\to\infty]{\mathrm{d}} {\cal
N}(0,\mathrm{V}:=J^{-1}IJ^{-1})$ where
\begin{align*}
%& \sqrt{n}\left(\hat{\vartheta}_n-\vartheta_0\right) \xrightarrow[n\to\infty]{\mathrm{d}} {\cal
%N}(0,\mathrm{V}:=J^{-1}IJ^{-1}),\text{ where } \\
J& :=J(\vartheta_0,\Sigma_{e0})=2\mathbb{E}
\left\{\frac{\partial{e}_t'(\vartheta_0)}{\partial
\vartheta}\right\}\Sigma_{e0}^{-1}\left\{\frac{\partial
{e}_t(\vartheta_0)}{\partial \vartheta'}\right\}\text{ and } \\
I& :=I(\vartheta_0,\Sigma_{e0})=\sum_{h=-\infty}^{+\infty}\cov({\Upsilon}_t,{\Upsilon}_{t-h})\text{ with} \\
\Upsilon_t& :=\Upsilon_t(\vartheta_0,\Sigma_{e0})=\frac{\partial}{\partial \vartheta}\left\{\log \det \Sigma_{e0}+{e}_t'(\vartheta_0)\Sigma_{e0}^{-1}{e}_t(\vartheta_0)\right\}%_{\vartheta=\vartheta_0}
=2 \frac{\partial {e}'_{t}(\vartheta_0)}{\partial\vartheta}\Sigma_{e0}^{-1}{e}_{t}(\vartheta_0).
\end{align*}

\section{Main results} \label{result}

In order to state our results,  we need  further notations that will be used both in univariate and multivariate contexts. In all the sequel, we denote  by $\xrightarrow[]{\mathrm{d}}$, respectively by $\xrightarrow[]{\mathbb{P}}$ the convergence in distribution, respectively in probability.  The symbol $\mathrm{o}_{\mathbb P}(1)$ is used for a sequence of random variables that converges to zero in probability.

 Let $(B_K(r))_{r\ge 0}$ be a $K$-dimensional Brownian motion starting from $0$.  For $K\ge 1$, we denote  $\mathcal U_K$ the random variable defined by
\begin{align}
\label{um}
\mathcal U_K&={B}'_{K}(1){V}_{K}^{-1}{B}_{K}(1)
\end{align}
where
\begin{align}
\label{vm}
{V}_{K}=\int_0^1\left ( {B}_{K}(r)-r{B}_{K}(1)\right)\left({B}_{K}(r)-r{B}_{K}(1)\right)'dr.
\end{align}
The critical values for $\mathcal{U}_K$ have been tabulated by Lobato in \cite{lobato}.
The random variable $\mathcal U_K$ will intervene in the asymptotic behaviour of the laws of the statistics proposed in \eqref{bms}.
%We have produce our own table (see Table \ref{BMS} in Section \ref{table}).
%We remark that the critical values that we have computed are close to the one obtained by Lobato. Moreover, the difference between these two tables do not affect the results of the testing procedure (see the comments in Section \ref{ne}).
\subsection{Multivariate ARMA models}\label{notat}

When $p>0$ or $q>0$, we may define $\hat {{e}}_t= \tilde{e}_t(\hat\vartheta_n)$  for $ t\geq1$, the quasi-maximum likelihood residuals as
$$\hat {{e}}_t={X}_t-\sum_{i=1}^pA_{0}^{-1}(\hat\vartheta_n)A_{i}(\hat\vartheta_n)\hat{{X}}_{t-i}
+\sum_{i=1}^qA_{0}^{-1}(\hat\vartheta_n)B_{i}(\hat\vartheta_n)B_{0}^{-1}(\hat\vartheta_n)A_{0}(\hat\vartheta_n)\hat
{{e}}_{t-i},$$ for $t=1,\dots,n$, with $\hat{ {X}}_t=0$ for $ t\leq0$ and
$\hat {{X}}_t={X}_t$ for $ t\geq1$. We denote by
\begin{eqnarray*}
\Gamma_e(h)=\frac{1}{n}\sum_{t=h+1}^{n}{e}_t{e}'_{t-h}\quad \mbox{and} \quad
\hat\Gamma_e(h)=\frac{1}{n}\sum_{t=h+1}^{n}\hat {{e}}_t\hat {{e}}'_{t-h}
\end{eqnarray*}
the multivariate  white noise "empirical" autocovariances and residual
autocovariances. For a fixed integer $m\geq1$, let
$$\Gamma_m =\left(\left\{\mbox{vec}
\Gamma_e(1)\right\}',\dots,\left\{\mbox{vec} \Gamma_e(m)\right\}'
\right)',\;{\hat\Gamma}_m
=\left(\left\{\mbox{vec}\hat\Gamma_e
(1)\right\}',\dots,\left\{\mbox{vec}\hat\Gamma_e(m)\right\}'
\right)'.$$
%Let, $\hat\Sigma_{e0}=\hat\Gamma_e(0)=
%{n}^{-1}\sum_{t=1}^{n}\hat e_t\hat e'_{t}$.
Denoting the diagonal matrices $S_e$ and $\hat{S_e}$  by
$$S_e=\mbox{Diag}\left(\sigma_e(1),\dots,
\sigma_e(d)\right)\quad \mbox{and} \quad
\hat{S_e}=\mbox{Diag}\left(\hat\sigma_e(1),\dots,
\hat\sigma_e(d)\right),$$ where $\sigma_e^2(i)$ is the variance of
the $i$-th coordinate of ${e}_t$ and $\hat\sigma_e^2(i)$ is its sample
estimate ({\em i.e.} $\sigma^2_e(i)=\mathbb{E} e_{it}^2$ and
$\hat\sigma^2_e(i)=n^{-1}\sum_{t=1}^n\hat e_{it}^2$). The
theoretical and sample autocorrelations at lag $\ell$ are
respectively defined by $R_e(\ell)=S_e^{-1}\Gamma_e(\ell)S_e^{-1}$
and $\hat R_e(\ell)=\hat S_e^{-1}\hat\Gamma_e(\ell)\hat S_e^{-1},$
with ${\Gamma}_e(\ell):=\mathbb{E} {e}_t {e}'_{t-\ell}=0$ for all $\ell\neq0.$ In the sequel, we will also need the vector of the first $m$  sample autocorrelations
$$\hat{{\rho}}_m =\left(\left\{\mbox{vec}\hat R_e
(1)\right\}',\dots,\left\{\mbox{vec}\hat R_e(m)\right\}' \right)'.$$
In \cite{yac}, it is proved that
\begin{eqnarray}\label{GammaO_P}
\vec(\hat S_e^{-1}\hat\Gamma_e(h)\hat S_e^{-1})- \vec(
S_e^{-1}\hat\Gamma_e(h) S_e^{-1})=\mathrm{o}_\mathbb{P}(1).
\end{eqnarray}
Now using the elementary identities  $\vec(ABC)=(C'\otimes A)\vec(B)$
and $(A\otimes B)^{-1}=A^{-1}\otimes B^{-1}$ when A and B are
invertible, it follows from (\ref{GammaO_P}) that
\begin{eqnarray*}\hat{{\rho}}_m
&=&\left(\left\{\mbox{vec}\hat R_e
(1)\right\}',\dots,\left\{\mbox{vec}\hat R_e(m)\right\}'
\right)'%\\&=&\left(\left\{(\hat S_e\otimes\hat
%S_e)^{-1}\mbox{vec}\hat\Gamma_e(1)\right\}',\dots,\left\{(\hat
%S_e\otimes\hat S_e)^{-1}\mbox{vec}\hat\Gamma_e(m)\right\}'
%\right)'
\\&=&\left\{I_m\otimes(\hat S_e\otimes\hat S_e)^{-1}\right\}
\hat\Gamma_m =\left\{I_m\otimes(S_e\otimes S_e)^{-1}\right\}
\hat\Gamma_m+\mathrm{o}_\mathbb{P}(1).
\end{eqnarray*}
%$\hat{\boldsymbol{\Gamma}}$
Based on the residual empirical autocorrelations $\hat R_e(h)$, the
statistics \eqref{bpmulti} are usually used to test the following null  hypothesis
\begin{itemize}
\item[($\mathbf{H0}$) : ]  $({X}_t)_{t\in \mathbb Z}$ satisfies a VARMA$(p,q)$ representation;
\end{itemize}
against the alternative
\begin{itemize}
\item[($\mathbf{H1}$) :] $({X}_t)_{t\in \mathbb Z}$ does not admit a VARMA$(p,q)$ representation or $({X}_t)_{t\in \mathbb Z}$ satisfies a VARMA$(p',q')$ representation with $p'>p$ or $q'>q$.
\end{itemize}
As mentioned before, our strategy relies on new statistics and we need further notations to explain our purpose. We denote
$\Lambda$ the matrix in $\mathbb R^{md^2\times(k_0+md^2)}$ defined in block
formed by
\begin{equation*}%\label{Phi_m}
{\Lambda}=({\Phi}_m{J}^{-1} {\vert} I_{md^2})\text{ and }
{\Phi}_m=\mathbb{E}\left\{\left(\begin{array}{c}
{e}_{t-1}\\\vdots\\{e}_{t-m}\end{array}\right)\otimes \frac{\partial
{e}_{t}}{\partial\vartheta'}\right\}.
\end{equation*}
%where $J$ is defined in \eqref{exp_J}
%and $\Phi_m$ is defined by
%\begin{align}\label{Phim}
%\Phi_m& =\e\left\{\left(\begin{array}{c}
%\epsilon_{t-1}\\\vdots\\\epsilon_{t-m}\end{array}\right)\frac{\partial
%\epsilon_{t}(\theta_0)}{\partial\theta'}\right\},
%\end{align}
A Taylor expansion around $\vartheta_0$ gives
\begin{eqnarray*}
\sqrt{n}\, {\hat\Gamma}_m
=\frac{1}{\sqrt{n}}\sum_{t=1}^n{\Lambda}{U}_t+\mathrm{o}_\mathbb{P}(1),\text{ where }{U}_t =  \left (\left (-2\frac{\partial
{e}'_{t}}{\partial\vartheta}\Sigma_{e0}^{-1}{e}_{t}\right )' {,}
\left ({e}_{t-1}\otimes {e}_t \right )' {,}\dots{,}\left ( {e}_{t-m}\otimes {e}_t \right )' \right )'.
\end{eqnarray*}
One refers to the extended online version of this paper for further details about the above expressions. At this stage, we do not rely on the classical method that would consist in estimating the asymptotic covariance matrix of $\Lambda U_t$. We  rather try to apply Lemma 1 in \cite{lobato}. So we need to check that a functional central limit theorem holds for the process $U:=(U_t)_{t\ge 1}$.

Without the first entry in $U_t$, we would be in the context of Lobato and the functional central limit theorem would be clear thanks to the mixing condition on the noise process $\epsilon$. Unfortunately, it is more difficult to deal with the process $U$ itself. In order to prove that
$\frac{1}{\sqrt{n}}\sum_{j=1}^{\lfloor nr\rfloor }\Lambda U_j$ converges on the Skorokhod space to a Brownian motion,
we will employ a cutoff argument on the  representation of $\partial e_t(\vartheta_0)/\partial \vartheta$ as an infinite sum of the past of the noise. This difficulty is overcome in the proof of Theorem \ref{sn1} (see the extended online version of this paper for further details).

Finally, we define the normalization matrix $C_{md^2}\in\mathbb R^{md^2\times md^2}$  by
\begin{equation*}%\label{cmVARMA}
C_{md^2} =\frac{1}{n^2}\sum_{t=1}^{n}{S}_{t}{S}'_{t} \text{ where }
{S}_t =\sum_{j=1}^{t}\left({\Lambda}{U}_j-\Gamma_m\right).
\end{equation*}
%\begin{align*}
%%\label{st}
%\mathbf{S}_t& =\sum_{j=1}^{t}\left(\boldsymbol{\Gamma}\mathbf{U}_j-\Gamma_m\right).
%\end{align*}
The following theorem states the asymptotic distributions of the sample autocovariances and autocorrelations.
%We are now able to state the following theorem, which is an
%extension of Theorem~\ref{sn1}.
\begin{theo}\label{sn1VARMA}
We assume that $p>0$ or $q>0$. Under Assumptions {\bf (A0)}--{\bf (A7)} and  under the null hypothesis {\bf (H0)}, we have
\begin{align*}%\label{conv1VARMA}
n\, \hat\Gamma_m'C_{md^2}^{-1}\hat\Gamma_m & \xrightarrow[n\to\infty]{\mathrm{d}} \mathcal{U}_{md^2}.
\end{align*}
%where the normalization matrix $C_{md^2}\in\mathbb R^{md^2\times md^2}$ is defined by
%\begin{align}\label{cmVARMA}
%C_{md^2}& =\frac{1}{n^2}\sum_{t=1}^{n}\mathbf{S}_{t}\mathbf{S}'_{t} .
%\end{align}
The sample autocorrelations satisfy
\begin{align*}%\label{conv2VARMA}
n\,\hat{{\rho}}_m'\left\{I_m\otimes(S_e\otimes S_e)\right\}
C_{md^2}^{-1}\left\{I_m\otimes(S_e\otimes S_e)\right\}\hat{{\rho}}_m
& \xrightarrow[n\to\infty]{\mathrm{d}} \mathcal{U}_{md^2}
\end{align*}
\end{theo}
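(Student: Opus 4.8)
The plan is to reduce the statement to a self-normalization argument in the spirit of Lemma~1 of \cite{lobato}, applied to the $md^2$-dimensional centred process $(\Lambda U_t)_{t\ge1}$. Everything hinges on one analytic ingredient — a functional central limit theorem (FCLT) for the partial sums of $\Lambda U_t$; granted that, $\hat\Gamma_m$ plays the role of the asymptotically linear ``estimator'', $C_{md^2}$ is exactly the attached self-normalizer, and $\mathcal U_{md^2}$ comes out of the continuous mapping theorem with the unknown long-run covariance cancelling.

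First I would note that under \textbf{(H0)} one has $\mathbb{E}[\Lambda U_t]=0$: the lower blocks $e_{t-i}\otimes e_t$ of $U_t$ have expectation $\vec\,\mathbb{E}[e_te'_{t-i}]=0$ for $1\le i\le m$ because $e$ is a weak white noise, and by \eqref{e(vartheta)} the quantity $\partial e_t(\vartheta_0)/\partial\vartheta$ is a convergent series in $e_{t-1},e_{t-2},\dots$ with geometrically decaying coefficients, so the first block $-2(\partial e'_t/\partial\vartheta)\Sigma_{e0}^{-1}e_t$ also has zero mean (again because the lagged autocovariances of $e$ vanish). Then I would write $n\,\hat\Gamma_m'C_{md^2}^{-1}\hat\Gamma_m=(\sqrt n\,\hat\Gamma_m)'C_{md^2}^{-1}(\sqrt n\,\hat\Gamma_m)$, invoke the displayed Taylor expansion $\sqrt n\,\hat\Gamma_m=n^{-1/2}\sum_{t=1}^n\Lambda U_t+\mathrm{o}_\mathbb{P}(1)$, and observe that $C_{md^2}=n^{-1}\sum_{t=1}^n\bar S_n(t/n)\bar S_n(t/n)'$ coincides, up to a negligible endpoint term, with $\int_0^1\bar S_n(r)\bar S_n(r)'dr$, where $\bar S_n(r):=n^{-1/2}S_{\lfloor nr\rfloor}$. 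Both $\sqrt n\,\hat\Gamma_m$ and $C_{md^2}$ are thus continuous functionals of the single partial-sum process $r\mapsto n^{-1/2}\sum_{j\le nr}\Lambda U_j$, so an FCLT for the latter delivers everything jointly.

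The core step, which I expect to be the main obstacle, is the invariance principle
\[
n^{-1/2}\sum_{j=1}^{\lfloor nr\rfloor}\Lambda U_j\ \xrightarrow[n\to\infty]{\mathrm{d}}\ \Xi^{1/2}B_{md^2}(r)\qquad\text{in }D([0,1],\mathbb{R}^{md^2}),
\]
with $\Xi=\sum_{h\in\mathbb{Z}}\cov(\Lambda U_0,\Lambda U_h)$ nonsingular. The difficulty, flagged before the statement, is that $\partial e_t(\vartheta_0)/\partial\vartheta$ is an \emph{infinite-order} function of the past of $e$, so $\Lambda U_t$ is not a function of finitely many coordinates of the mixing sequence $e$ and the classical invariance principles for strongly mixing arrays do not apply directly. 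I would handle this by a cutoff argument: truncate $\partial e_t(\vartheta_0)/\partial\vartheta$ at lag $N$, obtaining $\Lambda U_t^{(N)}$, which is a measurable function of $(e_t,\dots,e_{t-N-m})$ and hence strongly mixing with a shifted rate; under \textbf{(A7)} the moment $\mathbb{E}\|e_t\|^{4+2\nu}<\infty$ provides the $(2+\delta)$-th moment of the quadratic entries of $U_t^{(N)}$ and $\sum_k\{\alpha_e(k)\}^{\nu/(2+\nu)}<\infty$ controls the dependence, so a standard mixing FCLT applies to $\Lambda U^{(N)}$. I would then establish the uniform negligibility
\[
\lim_{N\to\infty}\limsup_{n\to\infty}\mathbb{P}\Bigl(\sup_{0\le r\le1}\bigl\|n^{-1/2}\!\!\sum_{j\le nr}(\Lambda U_j-\Lambda U_j^{(N)})\bigr\|>\eps\Bigr)=0,
\]
using a maximal inequality for mixing sums together with the geometric decay of the coefficients of $\partial e_t(\vartheta_0)/\partial\vartheta$ (guaranteed by \textbf{(A1)}--\textbf{(A2)}, since $\det A_\vartheta$ and $\det B_\vartheta$ have no root in the closed unit disc), and conclude the FCLT by the usual ``$N$ first, then $n$'' approximation lemma. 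Nonsingularity of $\Xi$ I would argue as for the nonsingularity of $I$ in the QMLE analysis of \cite{yac2}.

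With the FCLT in hand, the conclusion follows Lemma~1 of \cite{lobato}. The centring by $\Gamma_m$ in the definition of $S_t$ is arranged precisely so that $\bar S_n(\cdot)$ converges, jointly with $\sqrt n\,\hat\Gamma_m$, to the scaled Brownian bridge of that same limiting motion:
\[
\bigl(\sqrt n\,\hat\Gamma_m,\ \bar S_n(\cdot)\bigr)\ \xrightarrow[n\to\infty]{\mathrm{d}}\ \bigl(\Xi^{1/2}B_{md^2}(1),\ \Xi^{1/2}\{B_{md^2}(\cdot)-(\cdot)B_{md^2}(1)\}\bigr),
\]
the bridge being independent of the endpoint; checking this reduces to identities relating $J$, $I$, $\Phi_m$ and the long-run covariances of the two blocks of $\Lambda U_t$. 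The continuous mapping theorem, applied to $(x,C)\mapsto x'C^{-1}x$ (a.s.\ continuous at the limit because $V_{md^2}$ is a.s.\ positive definite), then gives
\[
n\,\hat\Gamma_m'C_{md^2}^{-1}\hat\Gamma_m\ \xrightarrow[n\to\infty]{\mathrm{d}}\ B_{md^2}'(1)\,\Xi^{1/2\prime}\bigl\{\Xi^{1/2}V_{md^2}\Xi^{1/2\prime}\bigr\}^{-1}\Xi^{1/2}B_{md^2}(1)=B_{md^2}'(1)V_{md^2}^{-1}B_{md^2}(1)=\mathcal U_{md^2},
\]
the two copies of $\Xi^{1/2}$ cancelling. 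Finally, for the autocorrelation form I would substitute the relation already derived from \eqref{GammaO_P} and the Kronecker identities, $\{I_m\otimes(S_e\otimes S_e)\}\hat\rho_m=\hat\Gamma_m+\mathrm{o}_\mathbb{P}(n^{-1/2})$; since $\sqrt n\,\hat\Gamma_m=O_\mathbb{P}(1)$ and $C_{md^2}^{-1}=O_\mathbb{P}(1)$, the second statistic differs from $n\,\hat\Gamma_m'C_{md^2}^{-1}\hat\Gamma_m$ by $\mathrm{o}_\mathbb{P}(1)$ and therefore has the same limit $\mathcal U_{md^2}$.
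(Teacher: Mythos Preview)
Your approach is essentially the paper's: the Taylor expansion $\sqrt{n}\,\hat\Gamma_m=n^{-1/2}\sum_t\Lambda U_t+\mathrm{o}_\mathbb{P}(1)$, the FCLT for $(\Lambda U_t)$ obtained by truncating the infinite-order representation of $\partial e_t(\vartheta_0)/\partial\vartheta$ and passing to the limit in $k$, and then Lobato's self-normalization via the continuous mapping theorem, all appear in the same order in the paper's proof (written out in the univariate case). The one point you omit and the paper handles separately is the almost-sure non-singularity of $C_{md^2}$ for \emph{finite} $n$: the paper proves this as an independent lemma, and it is precisely where the density hypothesis in \textbf{(A4)} is used; your appeal to the a.s.\ positive-definiteness of $V_{md^2}$ secures continuity at the limit but does not by itself guarantee that the pre-limit statistic is defined.
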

The proof of this result is available in the extended online version of this paper.

Of course, the above theorem is useless for practical purpose,  because it does not involve any observable quantities. This gap will be fixed below (see Theorem \ref{sn2VARMA}). Right now, we make several important comments that are necessary to compare Theorem \ref{sn1VARMA} with the existing results.
\begin{remi}When $p=q=0$, we no longer need to estimate the unknown parameter $\vartheta_0$. Thus a careful reading of the proofs shows that the vector $U_t$ is replaced by
$$ \tilde U_t =\left (\left ({e}_{t-1}\otimes {e}_t \right )' {,}\dots{,}\left ( {e}_{t-m}\otimes {e}_t \right )' \right )'$$ and $\Lambda$ is replaced by the identity matrix. Then we obtain the result of Lobato (see Lemma 1 in \cite{lobato}) and we thus generalized his result to the VARMA model.
\end{remi}
%\begin{remi}
%Pourquoi la matrice est inversible? Je n'aime pas trop parler de ça
%\end{remi}
%\begin{remi}
%In \cite{shaox}, the author has proposed a self-normalization approach to the construction of confidence regions for the parameter vector $\theta$.
%The unknown parameter is estimated by the Whittle estimator, but this difference is not significant. The striking difference consists in the lists of
%assumptions that the author has used. Even if they are natural, they are quite strong (especially the 8th moment condition) whereas our work only supposes that assumption $\mathbf{(A1)}$, on the moments,  is satisfied. Moreover, it is not clear to  us that, in the framework of \cite{shaox}, a goodness-of-fit test can be obtained.
%\end{remi}
In practice, one has to replace the matrix $C_{md^2}$ and the variance of
the noise $\Sigma_{e0}$  by their empirical or observable counterparts.
For $C_{md^2}$, the idea is to use $\hat{e}_t=\tilde{e}_t(\hat\vartheta_n)$ instead of the unobservable noise $e_t$. The matrices
$J$ and  $\Phi_m$ can be easily estimated by their
empirical counterparts
%, respectively by
%\begin{align}\label{phim}
%\hat\Phi_m & =\frac{1}{n}
%\sum_{t=1}^n\left\{\left(\hat e_{t-1},\dots,\hat
%e_{t-m}\right)'\frac{\partial
% e_{t}(\theta)}{\partial\theta'}\right\}_{\theta=\hat\theta_n}.
%\end{align}
%The matrix $J$ can easily be estimated by its empirical counterpart
%(see Theorem 2 in \cite{fz00})
\begin{equation*}
\hat{{J}}=\frac{2}{n} \sum_{t=1}^n \left\{\frac{\partial
\tilde{e}_t'(\hat\vartheta_n)}{\partial
\vartheta}\hat{\Sigma}_{e0}^{-1}\frac{\partial
\tilde{e}_t(\hat\vartheta_n)}{\partial \vartheta'}\right\}%_{\vartheta=\hat\vartheta_n}
\text{ and }
\hat{{\Phi}}_m =\frac{1}{n}
\sum_{t=1}^n\left\{\left(\begin{array}{c}
\hat{{e}}_{t-1}\\\vdots\\
\hat{{e}}_{t-m}\end{array}\right)\otimes \frac{\partial
\tilde{e}_{t}(\hat\vartheta_n)}{\partial\vartheta'}\right\}, %_{\vartheta=\hat\vartheta_n},
\end{equation*}
where
$\hat{\Sigma}_{e0}=\hat\Gamma_e(0)={n}^{-1}\sum_{t=1}^{n}\hat {{e}}_t\hat {{e}}'_{t}$. Thus we define
\begin{equation*}
%\label{hatGamma}
 \hat\Lambda =  \Big ( \hat \Phi_m \hat J_n^{-1} {\vert} I_{md^2}  \Big ) \text{ and }
  \hat U_t = \left (\left (-2\frac{\partial
\tilde{e}'_{t}(\hat\vartheta_n)}{\partial\vartheta}\hat{\Sigma}_{e0}^{-1}\tilde{e}_{t}(\hat\vartheta_n) \right )' {,}
\left (  \hat{{e}}_{t-1}\otimes \hat{{e}}_t \right )' {,}\dots{,}
\left ( \hat{{e}}_{t-m}\otimes \hat{{e}}_t \right )' \right )'.
\end{equation*}
Finally we denote the normalization matrix $\hat C_{md^2}\in\mathbb R^{md^2\times md^2}$ as
 \begin{equation}
\label{hatcmVARMA}
\hat C_{md^2} =\frac{1}{n^2}\sum_{t=1}^{n}\hat{{S}}_{t}\hat{{S}}'_{t}
  \text{ where }\hat {{S}}_t =\sum_{j=1}^{t}\left(\hat{{\Lambda}}\hat{{U}}_j-\hat\Gamma_m\right).
\end{equation}
The above quantities are all observable and we are able to state our second
theorem which is the applicable counterpart of Theorem \ref{sn1VARMA}.
\begin{theo}\label{sn2VARMA}
Under the assumptions of Theorem \ref{sn1VARMA}, we have
\begin{align*}%\label{hatconv1VARMA}
n\, \hat\Gamma_m'\hat C_{md^2}^{-1}\hat\Gamma_m & \xrightarrow[n\to\infty]{\mathrm{d}} \mathcal{U}_{md^2}.
\end{align*}
%where the normalization matrix $\hat C_{md^2}\in\mathbb R^{md^2\times md^2}$ is defined by
%\begin{align}\label{hatcmVARMA}
%\hat C_{md^2}& =\frac{1}{n^2}\sum_{t=1}^{n}\hat{\mathbf{S}}_{t}\hat{\mathbf{S}}'_{t} .
%\end{align}
The sample autocorrelations satisfy
\begin{align*}%\label{hatconv2VARMA}
 {Q}_m^\textsc{sn}:=n\,\hat{{\rho}}_m'\left\{I_m\otimes(\hat S_e\otimes \hat S_e)\right\}
\hat C_{md^2}^{-1}\left\{I_m\otimes(\hat S_e\otimes \hat S_e)\right\}\hat{{\rho}}_m
& \xrightarrow[n\to\infty]{\mathrm{d}} \mathcal{U}_{md^2}
\end{align*}
\end{theo}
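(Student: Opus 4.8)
The plan is to derive Theorem~\ref{sn2VARMA} from Theorem~\ref{sn1VARMA} by showing that replacing every theoretical quantity by its observable counterpart produces only an asymptotically negligible perturbation. Concretely, since Theorem~\ref{sn1VARMA} already gives $n\,\hat\Gamma_m'C_{md^2}^{-1}\hat\Gamma_m \xrightarrow{\mathrm d}\mathcal U_{md^2}$, it suffices to prove that $\hat C_{md^2}-C_{md^2}=\mathrm o_{\mathbb P}(1)$ (entrywise), together with the fact that $C_{md^2}$ converges in distribution to the a.s. invertible matrix $V_{md^2}$ built from Brownian motion (this invertibility is part of what underlies Theorem~\ref{sn1VARMA} and Lobato's Lemma~1); then $\hat C_{md^2}^{-1}-C_{md^2}^{-1}=\mathrm o_{\mathbb P}(1)$ by continuity of matrix inversion on the set of invertible matrices, and combined with the tightness of $\sqrt n\,\hat\Gamma_m=O_{\mathbb P}(1)$ (from the Taylor expansion displayed before Theorem~\ref{sn1VARMA}) we get $n\,\hat\Gamma_m'(\hat C_{md^2}^{-1}-C_{md^2}^{-1})\hat\Gamma_m=\mathrm o_{\mathbb P}(1)$, hence the first convergence. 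The autocorrelation version then follows from the first one exactly as in Theorem~\ref{sn1VARMA}: using $\hat\rho_m=\{I_m\otimes(\hat S_e\otimes\hat S_e)^{-1}\}\hat\Gamma_m$, the factors $\{I_m\otimes(\hat S_e\otimes\hat S_e)\}$ cancel the $(\hat S_e\otimes\hat S_e)^{-1}$ in $\hat\rho_m$, leaving exactly $n\,\hat\Gamma_m'\hat C_{md^2}^{-1}\hat\Gamma_m$, which we have just handled.

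The heart of the argument is therefore the estimate $\hat C_{md^2}-C_{md^2}=\mathrm o_{\mathbb P}(1)$. Recall $\hat C_{md^2}=n^{-2}\sum_{t=1}^n \hat S_t\hat S_t'$ with $\hat S_t=\sum_{j=1}^t(\hat\Lambda\hat U_j-\hat\Gamma_m)$, and similarly for $C_{md^2}$ with $S_t=\sum_{j=1}^t(\Lambda U_j-\Gamma_m)$. I would first control the difference of the partial sums. Write $\hat\Lambda\hat U_j-\Lambda U_j=(\hat\Lambda-\Lambda)\hat U_j+\Lambda(\hat U_j-U_j)$ and treat the two pieces separately. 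For the "$\Lambda$" block, one uses that $\hat J_n\to J$ and $\hat\Phi_m\to\Phi_m$ a.s. (these are standard ergodic-theorem / consistency facts of the same flavour as those established in \cite{yac2}, and the continuity/third-order differentiability assumption {\bf (A1)} together with {\bf (A5)} guarantees the derivative processes behave well), so $\hat\Lambda-\Lambda=\mathrm o_{\mathbb P}(1)$, while $\|\hat U_j\|$ is controlled on average by a quantity that is $O_{\mathbb P}(1)$ thanks to the moment assumption {\bf (A7)}. For the "$U$" block, $\hat U_j-U_j$ involves (i) the difference $\tilde e_j(\hat\vartheta_n)-e_j$, which is handled by a Taylor expansion in $\vartheta$ around $\vartheta_0$ plus the a.s. vanishing of the initial-value truncation error (the series in \eqref{e(vartheta)} converges geometrically by {\bf (A2)}), and (ii) the difference of the derivative terms $\partial\tilde e_j(\hat\vartheta_n)/\partial\vartheta-\partial e_j/\partial\vartheta$ and $\hat\Sigma_{e0}^{-1}-\Sigma_{e0}^{-1}$, each again a consistency estimate. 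The Kronecker-product terms $\hat e_{j-1}\otimes\hat e_j-e_{j-1}\otimes e_j$ are split the usual way, $\hat a\otimes\hat b-a\otimes b=(\hat a-a)\otimes \hat b+a\otimes(\hat b-b)$. Collecting these, one obtains a bound of the type $\max_{1\le t\le n}n^{-1/2}\|\hat S_t-S_t\|=\mathrm o_{\mathbb P}(1)$, uniformly in $t$; here the key points are that $\hat\Gamma_m-\Gamma_m=O_{\mathbb P}(n^{-1/2})$ and $\hat\vartheta_n-\vartheta_0=O_{\mathbb P}(n^{-1/2})$, so the accumulated error over $t\le n$ summands still divides out after normalizing by $\sqrt n$. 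Finally, from $\hat C_{md^2}-C_{md^2}=n^{-2}\sum_{t=1}^n\big(\hat S_t\hat S_t'-S_tS_t'\big)$ and $\hat S_t\hat S_t'-S_tS_t'=(\hat S_t-S_t)\hat S_t'+S_t(\hat S_t-S_t)'$, using $\max_t n^{-1/2}\|S_t\|=O_{\mathbb P}(1)$ (which follows from the functional CLT for $n^{-1/2}\sum_{j\le\lfloor nr\rfloor}(\Lambda U_j-\Gamma_m)$ proved for Theorem~\ref{sn1VARMA}) and the just-obtained uniform bound on $\hat S_t-S_t$, we conclude $\hat C_{md^2}-C_{md^2}=\mathrm o_{\mathbb P}(1)$.

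The main obstacle I anticipate is the uniform-in-$t$ control of $\hat S_t-S_t$, specifically the contribution of $\hat\Lambda-\Lambda$ accumulated over the partial sum: one needs that $(\hat\Lambda-\Lambda)\sum_{j=1}^t\hat U_j$ is $\mathrm o_{\mathbb P}(\sqrt n)$ uniformly in $t\le n$, which requires more than $\hat\Lambda-\Lambda=\mathrm o_{\mathbb P}(1)$ alone --- it needs $\hat\Lambda-\Lambda=O_{\mathbb P}(n^{-1/2})$ matched against $\max_t\|\sum_{j\le t}\hat U_j\|=O_{\mathbb P}(n)$, or a more delicate pairing using that $\sum_{j\le t}(U_j-\mathbb E U_j)=O_{\mathbb P}(\sqrt n)$ uniformly while the centering $\mathbb E U_t$ is absorbed by the $-\hat\Gamma_m$ term. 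This is precisely the kind of bookkeeping that the cutoff argument on the infinite-MA representation of $\partial e_t(\vartheta_0)/\partial\vartheta$ (invoked before Theorem~\ref{sn1VARMA}) is designed to handle, so I would reuse that machinery. Everything else is a routine --- if tedious --- chain of consistency estimates under {\bf (A0)}--{\bf (A7)}, and the details can be relegated to the extended online version as the authors do for Theorem~\ref{sn1VARMA}.
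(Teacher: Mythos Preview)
Your proposal is correct and follows the same route as the paper: show $\hat C_{md^2}-C_{md^2}=\mathrm o_{\mathbb P}(1)$ by controlling the building-block differences between $\hat S_t$ and $S_t$, then pass to the inverse by perturbation and combine with $\sqrt n\,\hat\Gamma_m=O_{\mathbb P}(1)$. Your treatment is in fact more careful than the paper's own proof (which is quite terse on precisely the uniform-in-$t$ step you flag as the main obstacle), and your observation that the autocorrelation statistic $Q_m^{\textsc{sn}}$ reduces identically to $n\,\hat\Gamma_m'\hat C_{md^2}^{-1}\hat\Gamma_m$ is exactly right.
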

The proof of this result is available in the extended online version.

Based on the above result, we propose a version of Hosking \cite{H1980} statistic when one uses the following one
\begin{equation}\label{bmsbis}
\tilde{ {Q}}_m^\textsc{sn}= n^2\, \hat{{\rho}}_m'\left\{I_m\otimes(\hat S_e\otimes \hat S_e)\right\}
{D}_{n,md^2}\hat C_{md^2}^{-1}\left\{I_m\otimes(\hat S_e\otimes \hat S_e)\right\}\hat{{\rho}}_m,
\end{equation}
where the matrix ${D}_{n,md^2}\in\mathbb R^{md^2\times md^2}$ is diagonal with $(n/(n-1),...,n/(n-m))$ as diagonal terms.
\subsection{Examples}
In order to make our presentation more readable, the results of the previous section are presented in the one dimensional case (\emph{i.e} when $d=1$) which requires less technical notation.
The first  example is on an univariate weak ARMA$(p,q)$ model. In the second example, we give an explicit expression of the normalized matrix $C_{m}$ for one dimensional  AR$(1)$ and MA$(1)$ models.

\subsubsection{Univariate ARMA models}
When $d=1$, in the representation \eqref{VARMASTANDARD} we have $\mathbb{E}(\epsilon_t^2)=\sigma_{e0}^2$ and $a_{00}=b_{00}=1$. So  \eqref{VARMASTANDARD} takes the following form
\begin{equation}
  \label{FZARMA}
X_t-\sum_{i=1}^{p}a_{0i}X_{t-i}=\epsilon_t+\sum_{j=1}^{q}a_{0j}\epsilon_{t-j}.
\end{equation}
Then with our notations, the unknown parameter
$\vartheta_0=(a_{01},\dots{},a_{0p},b_{01},\dots{},b_{0q})'$ is supposed to
belong to the interior  of the
parameter space
\begin{align*}
\Theta & :=  \Big
\{\theta=(\vartheta_1,\dots{},\vartheta_p,\vartheta_{p+1},\dots,\vartheta_{p+q})'
\in{\Bbb R}^{p+q}, \, A_\vartheta(z)=1-\hbox{$\sum_{i=1}^{p}$}\vartheta_iz^i\text{
and } B_\vartheta(z)=1+\hbox{$\sum_{i=p+1}^{p+q}$}\vartheta_iz^i \\
& \hspace{1cm} \text{ have all
their zeros outside the unit disk and have also no common zero}\Big \}\ .
\end{align*}
The theoretical, respectively the sample autocorrelations, at lag $\ell$ take
the simpler forms $R_e(\ell)=\Gamma_e(\ell)/\Gamma_e(0)$, respectively
$\hat R_e(\ell)=\hat \Gamma_e(\ell)/\hat \Gamma_e(0),$
with $\Gamma_e(0):=\sigma_{e0}^2$. The vector of the first $m$  sample autocorrelations is now written as
$\hat\rho_m =(\hat R_e
(1),\dots,\hat R_e(m))'$. The statistics defined in \eqref{bp} are based on the residual empirical autocorrelations $\hat R_e(h)$, and are used to test the null  hypothesis stated above. As mentionned before, we obtain  the block matrix
$\Lambda\in\mathbb R^{m\times(p+q+m)}$ formed by
\begin{align}\label{Gamma}
\Lambda =  \Big ( \Phi_m J^{-1} {\vert} I_m  \Big ), \text{ where } \Phi_m =\e\left\{\left(\begin{array}{c}
e_{t-1}\\\vdots\\e_{t-m}\end{array}\right)\frac{\partial
e_{t}(\vartheta_0)}{\partial\vartheta'}\right\}
\text{ and }
J=  \frac{2}{\sigma_{e0}^2}\mathbb E \left [ \frac{\partial e_t(\vartheta_0)}{\partial\vartheta}\frac{\partial e_t(\vartheta_0)}{\partial\vartheta'}  \right ].
\end{align}
%where $J$ is defined in \eqref{exp_J}
%and $\Phi_m$ is defined by
%\begin{align}\label{Phim}
%\Phi_m& =\e\left\{\left(\begin{array}{c}
%\epsilon_{t-1}\\\vdots\\\epsilon_{t-m}\end{array}\right)\frac{\partial
%\epsilon_{t}(\theta_0)}{\partial\theta'}\right\},
%\end{align}
Then one may write
\begin{equation*}%\label{U}
\sqrt{n}\ \hat\Gamma_m  = \frac{1}{\sqrt{n}}\sum_{t=1}^n\ \Lambda U_t+
\mathrm{o}_{\mathbb P}(1) ,\text{ with }
 U_t =
\left (-2\frac{\partial e_t(\vartheta_0)
}{\partial
\vartheta}\frac{1}{\sigma_{e0}^2}e_t,e_te_{t-1},\dots,e_te_{t-m}\right
)'.
\end{equation*}
Finally, the normalization matrix  $C_m\in\mathbb R^{m\times m}$ is now defined by
\begin{equation*}%\label{cm}
C_{m} =\frac{1}{n^2}\sum_{t=1}^{n}S_{t}S'_{t} \text{ where } S_t
=\sum_{j=1}^{t}\left(\Lambda U_j-\Gamma_m\right).
\end{equation*}
%\begin{align}
%\label{st}
%S_t& =\sum_{j=1}^{t}\left(\Gamma U_j-\gamma_m\right)
%\end{align}
%and where the process $U$ is defined by \eqref{U}.
We are now able to state the following theorem, which is clearly the univariate version of Theorem~\ref{sn1VARMA}.
\begin{theo}\label{sn1}
Assume that $p>0$ or $q>0$.
Under the assumptions of Theorem~\ref{sn1VARMA}, we have
\begin{align}\label{conv1}
n\, \hat\Gamma_m'C_m^{-1}\hat\Gamma_m & \xrightarrow[n\to\infty]{\mathrm{d}} \mathcal{U}_m.
\end{align}
%where the normalization matrix $C_m\in\mathbb R^{m\times m}$ is defined by
%\begin{align}\label{cm}
%C_{m}& =\frac{1}{n^2}\sum_{t=1}^{n}S_{t}S'_{t} .
%\end{align}
The sample autocorrelations satisfy
\begin{align}\label{conv2}
n\,\sigma_{e0}^4 \,\hat\rho_m'C_m^{-1}\hat\rho_m & \xrightarrow[n\to\infty]{\mathrm{d}} \mathcal{U}_m.
\end{align}
\end{theo}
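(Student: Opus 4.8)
\emph{Proof plan.} The approach is to recognize the left-hand side of \eqref{conv1} as a self-normalized statistic of Lobato type and to apply Lemma~1 of \cite{lobato} to the $\mathbb R^m$-valued sequence $Y_t:=\Lambda U_t$. First observe that $Y_t$ is centered: the last $m$ coordinates of $U_t$ are $e_te_{t-1},\dots,e_te_{t-m}$, with expectations $\Gamma_e(h)=0$ for $h\ge1$, and the first block of $U_t$ is $-2\sigma_{e0}^{-2}(\partial e_t(\vartheta_0)/\partial\vartheta)e_t$, where by \eqref{e(vartheta)} one has $\partial e_t(\vartheta_0)/\partial\vartheta=\sum_{k\ge1}\mu_ke_{t-k}$ (the $\mu_k$ decaying geometrically, since by {\bf (A2)} the relevant transfer functions and their $\vartheta$-derivatives are holomorphic in a neighborhood of the closed unit disk), so $\mathbb E[(\partial e_t/\partial\vartheta)e_t]=\sum_{k\ge1}\mu_k\Gamma_e(k)=0$. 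The Taylor expansion recalled above identifies $\sqrt n\,\hat\Gamma_m$ with $n^{-1/2}\sum_{t\le n}Y_t$ up to $\mathrm{o}_{\mathbb P}(1)$, and $C_m=n^{-2}\sum_{t\le n}S_tS_t'$ is the self-normalizer built from the recentered partial sums of $Y$. Lemma~1 of \cite{lobato} then yields \eqref{conv1} once its hypothesis, a functional central limit theorem (FCLT) for $Y$, is checked: if
\[
n^{-1/2}\sum_{j=1}^{\lfloor n\,\cdot\,\rfloor}Y_j\ \xrightarrow[n\to\infty]{\mathrm d}\ \Xi^{1/2}W_m(\cdot)\quad\text{in }D([0,1],\mathbb R^m),
\]
with $W_m$ a standard $m$-dimensional Brownian motion and $\Xi=\sum_{h\in\mathbb Z}\cov(Y_t,Y_{t-h})$ positive definite, then $\sqrt n\,\hat\Gamma_m\xrightarrow{\mathrm d}\Xi^{1/2}W_m(1)$ and, by the continuous mapping theorem applied to $x\mapsto\int_0^1(x(r)-rx(1))(x(r)-rx(1))'\,dr$ (continuous at continuous paths), $C_m\xrightarrow{\mathrm d}\Xi^{1/2}V_m\Xi^{1/2}$ jointly, $V_m$ as in \eqref{vm} and $\Xi^{1/2}V_m\Xi^{1/2}$ almost surely invertible; a further continuous mapping gives $n\,\hat\Gamma_m'C_m^{-1}\hat\Gamma_m\xrightarrow{\mathrm d}W_m(1)'V_m^{-1}W_m(1)=\mathcal U_m$, the long-run variance $\Xi$ cancelling in the ratio---the self-normalization effect.

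The heart of the proof, and the point where the situation departs from \cite{lobato}, is this FCLT. The autocovariance block $(e_te_{t-1},\dots,e_te_{t-m})'$ of $Y_t$ is a fixed measurable function of $(e_t,\dots,e_{t-m})$, hence strongly mixing with coefficients dominated by $\alpha_e(\cdot-m)$ and, by Cauchy--Schwarz and {\bf (A7)}, with finite moment of order $2+\nu$; for it a multivariate invariance principle for strongly mixing sequences holds under {\bf (A7)}. The obstruction is the remaining block, which through $\partial e_t/\partial\vartheta=\sum_{k\ge1}\mu_ke_{t-k}$ is an \emph{infinite} quadratic form in the noise, hence not a function of finitely many $e_s$. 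I would deal with it by a cutoff argument: replace $\partial e_t/\partial\vartheta$ by $\partial e_t^{(K)}/\partial\vartheta:=\sum_{k=1}^K\mu_ke_{t-k}$, let $Y_t^{(K)}$ be the corresponding truncated vector, which \emph{is} a function of $(e_t,\dots,e_{t-\max(m,K)})$ so that the mixing invariance principle gives $n^{-1/2}\sum_{j\le\lfloor n\,\cdot\,\rfloor}Y_j^{(K)}\xrightarrow{\mathrm d}\Xi_K^{1/2}W_m(\cdot)$ for each fixed $K$, and then let $K\to\infty$. The geometric decay of the $\mu_k$ gives $\Xi_K\to\Xi$, and combining a maximal inequality for partial sums of mixing sequences with the bound $\var(n^{-1/2}\sum_{j\le n}(Y_j-Y_j^{(K)}))\le C(\sum_{k>K}\|\mu_k\|)^2\to0$ yields $\lim_K\limsup_n\mathbb P(\sup_{r\in[0,1]}\|n^{-1/2}\sum_{j\le\lfloor nr\rfloor}(Y_j-Y_j^{(K)})\|>\varepsilon)=0$; a ``convergence together'' argument then upgrades this to the FCLT for $Y$. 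Positive definiteness of $\Xi$, needed to invert $\Xi^{1/2}V_m\Xi^{1/2}$, follows from the nondegeneracy of the asymptotic law of $\sqrt n\,\hat\Gamma_m$ established in \cite{yac}. I expect this FCLT---specifically the uniform control of the truncation error in the score block---to be the main obstacle; the appeals to Lemma~1 of \cite{lobato}, the continuous mapping theorem and Slutsky's lemma are otherwise routine.

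Finally, \eqref{conv2} follows from \eqref{conv1} by Slutsky's lemma. In the scalar case $\hat\rho_m=\hat\Gamma_m/\hat\Gamma_e(0)$ with $\hat\Gamma_e(0)\xrightarrow{\mathbb P}\sigma_{e0}^2$ (consistency of the residual variance, from the ergodicity in {\bf (A4)} and from $\hat\vartheta_n\to\vartheta_0$; see \cite{yac2}), so $\sqrt n\,\sigma_{e0}^2\hat\rho_m=(\sigma_{e0}^2/\hat\Gamma_e(0))\sqrt n\,\hat\Gamma_m=\sqrt n\,\hat\Gamma_m+\mathrm{o}_{\mathbb P}(1)$, using $\sqrt n\,\hat\Gamma_m=\mathrm{O}_{\mathbb P}(1)$. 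Since $C_m^{-1}=\mathrm{O}_{\mathbb P}(1)$ (its weak limit is a.s.\ invertible), expanding the quadratic form gives $n\,\sigma_{e0}^4\,\hat\rho_m'C_m^{-1}\hat\rho_m=n\,\hat\Gamma_m'C_m^{-1}\hat\Gamma_m+\mathrm{o}_{\mathbb P}(1)\xrightarrow{\mathrm d}\mathcal U_m$, which is \eqref{conv2}.
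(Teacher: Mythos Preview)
Your proposal is correct and follows essentially the same route as the paper: the Taylor expansion $\sqrt n\,\hat\Gamma_m=n^{-1/2}\sum_t\Lambda U_t+\mathrm o_{\mathbb P}(1)$, the FCLT for $\Lambda U_t$ obtained by truncating the infinite-sum representation of $\partial e_t/\partial\vartheta$ and controlling the remainder uniformly in $n$, and then the continuous mapping argument \`a la Lobato. The one point you pass over is the almost-sure invertibility of the \emph{finite-sample} matrix $C_m$ (not just of its weak limit); the paper devotes a separate lemma to this and uses the positive-density part of {\bf (A4)} in an essential way, so you should flag that assumption when you write the statistic $\hat\Gamma_m'C_m^{-1}\hat\Gamma_m$.
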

%Of course, the above theorem is useless for practical purpose,  because it does not involve any observable quantities. This gap will be fixed hereafter. Right now, we make several important comments that are necessary to compare Theorem \ref{sn1} with the existing results.
%\begin{remi}When $p=q=0$, we do no more  need to estimate the unknown parameter $\theta_0$. Thus a careful reading of the proofs shows that the vector $U_t$ is replaced by
%$$ \tilde U_t =\left (\epsilon_t\epsilon_{t-1},...,\epsilon_t\epsilon_{t-m}\right )' $$ and $\Gamma$ is replaced by the identity matrix. Then we obtain the result of Lobato (see Lemma 1 in \cite{lobato}) and we thus generalized his result to the ARMA model.
%\end{remi}
%\begin{remi}
%Pourquoi la matrice est inversible? Je n'aime pas trop parler de ça
%\end{remi}
%\begin{remi}
%In \cite{shaox}, the author has proposed a self-normalization approach to the construction of confidence regions for the parameter vector $\theta$.
%The unknown parameter is estimated by the Whittle estimator, but this difference is not significant. The striking difference consists in the lists of
%assumptions that the author has used. Even if they are natural, they are quite strong (especially the 8th moment condition) whereas our work only supposes that assumption $\mathbf{(A1)}$, on the moments,  is satisfied. Moreover, it is not clear to  us that, in the framework of \cite{shaox}, a goodness-of-fit test can be obtained.
%\end{remi}
As mentioned in the VARMA case, Theorem \ref{sn1} has also to be completed.
The matrices $J$, $\Phi_m$ and the scalar $\sigma_{e0}^2$  can be easily estimated by their empirical counterparts:
%\begin{align}\label{phim}
%\hat\Phi_m & =\frac{1}{n}
%\sum_{t=1}^n\left\{\left(\hat e_{t-1},\dots,\hat
%e_{t-m}\right)'\frac{\partial
% e_{t}(\theta)}{\partial\theta'}\right\}_{\theta=\hat\theta_n}.
%\end{align}
%The matrix $J$ can easily be estimated by its empirical counterpart
%(see Theorem 2 in \cite{fz00})
\begin{equation*}
%\label{Jestimator}
\hat{J}_n=\frac{1}{\hat\sigma_{e0}^2}\frac{2}{n}\sum_{t=1}^n\frac{\partial \tilde{e}_t(\hat{\vartheta}_n)
}{\partial \vartheta}\frac{\partial \tilde{e}_t(\hat{\vartheta}_n)
}{\partial \vartheta'}{,}\ \hat\Phi_m  =\frac{1}{n}
\sum_{t=1}^n\left\{\left(\hat e_{t-1},\dots,\hat
e_{t-m}\right)'\frac{\partial
 \tilde{e}_{t}(\hat\vartheta_n)}{\partial\vartheta'}\right\}\text{ and }\hat\sigma_{e0}^2=\frac{1}{n}\sum_{t=1}^n\tilde{e}^2_t(\hat{\vartheta}_n).
\end{equation*}
Thus we define
\begin{equation*}
%\label{hatGamma}
 \hat\Lambda =  \Big ( \hat \Phi_m \hat J_n^{-1} {\vert} I_m  \Big ) \text{ and }
  \hat U_t = \left (-2\frac{\partial e_t(\hat\vartheta_n) }{\partial \vartheta}\frac{1}{\hat\sigma_{e0}^2}\hat e_t,\hat e_t\hat e_{t-1},\dots,\hat e_t\hat e_{t-m}\right )'.
\end{equation*}
Finally we denote the normalization matrix $\hat C_m\in\mathbb R^{m\times m}$ by
\begin{equation*}%\label{hatcm}
\hat C_{m} =\frac{1}{n^2}\sum_{t=1}^{n}\hat S_{t}\hat S'_{t}  \text{  where  }
\hat S_t =\sum_{j=1}^{t}\left(\hat\Lambda\hat U_j-\hat \Gamma_m\right).
\end{equation*}
The above quantities are all observable and the following result is the applicable counterpart of Theorem \ref{sn1}.
\begin{theo}\label{sn2}
Assume that $p>0$ or $q>0$. Under Assumptions of Theorem \ref{sn1}, we have
\begin{align*}%\label{hatconv1}
n\, \hat\Gamma_m'\hat C_m^{-1}\hat\Gamma_m & \xrightarrow[n\to\infty]{\mathrm{d}} \mathcal{U}_m.
\end{align*}
%where the normalization matrix $\hat C_m\in\mathbb R^{m\times m}$ is defined by
%\begin{align}\label{hatcm}
%\hat C_{m}& =\frac{1}{n^2}\sum_{t=1}^{n}\hat S_{t}\hat S'_{t} .
%\end{align}
The sample autocorrelations satisfy
\begin{align*}%\label{hatconv2}
Q_m^\textsc{sn}=n\,\hat\sigma_{e0}^4 \,\hat\rho_m'\hat C_m^{-1}\hat\rho_m & \xrightarrow[n\to\infty]{\mathrm{d}} \mathcal{U}_m.
\end{align*}
%where $\hat\sigma^2:=\hat\gamma(0)$ is a consistent estimator of the common variance $\sigma^2$ of the noise process $\epsilon$.
\end{theo}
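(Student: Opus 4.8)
\medskip
\noindent\textbf{Proof proposal.} I would deduce Theorem~\ref{sn2} from Theorem~\ref{sn1} by a feasibilisation argument. First, since the sample autocorrelations obey $\hat\rho_m=\hat\sigma_{e0}^{-2}\hat\Gamma_m$, one has the identity $n\,\hat\sigma_{e0}^4\,\hat\rho_m'\hat C_m^{-1}\hat\rho_m=n\,\hat\Gamma_m'\hat C_m^{-1}\hat\Gamma_m$, so the two displays of the theorem are the same statement and it is enough to prove $n\,\hat\Gamma_m'\hat C_m^{-1}\hat\Gamma_m\xrightarrow[n\to\infty]{\mathrm d}\mathcal U_m$. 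The same identity applied to \eqref{conv1}--\eqref{conv2}, together with $\hat\sigma_{e0}^2\xrightarrow{\mathbb{P}}\sigma_{e0}^2$ and Slutsky's lemma, gives $n\,\hat\Gamma_m'C_m^{-1}\hat\Gamma_m\xrightarrow[n\to\infty]{\mathrm d}\mathcal U_m$. Moreover the proof of Theorem~\ref{sn1} provides the expansion $\sqrt n\,\hat\Gamma_m=n^{-1/2}\sum_{t=1}^n\Lambda U_t+\mathrm{o}_{\mathbb P}(1)$ and a functional central limit theorem for $r\mapsto n^{-1/2}\sum_{j\le\lfloor nr\rfloor}\Lambda U_j$; hence $\sqrt n\,\hat\Gamma_m=\mathrm{O}_{\mathbb P}(1)$ and $C_m$ converges in distribution to an almost surely invertible matrix, so $C_m^{-1}=\mathrm{O}_{\mathbb P}(1)$ and $\lambda_{\min}(C_m)^{-1}$ is tight. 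Since
\[
n\,\hat\Gamma_m'\hat C_m^{-1}\hat\Gamma_m-n\,\hat\Gamma_m'C_m^{-1}\hat\Gamma_m=(\sqrt n\,\hat\Gamma_m)'\big(\hat C_m^{-1}-C_m^{-1}\big)(\sqrt n\,\hat\Gamma_m)
\]
and $\hat C_m^{-1}-C_m^{-1}=-\hat C_m^{-1}(\hat C_m-C_m)C_m^{-1}$, it suffices to prove $\hat C_m-C_m\xrightarrow{\mathbb{P}}0$ (which upgrades to $\hat C_m^{-1}-C_m^{-1}\xrightarrow{\mathbb{P}}0$ by the tightness just mentioned).

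As a preliminary I would record the consistency of the plug-in estimators: using the ergodic theorem (Assumption~\textbf{(A4)}), the moment and mixing condition~\textbf{(A7)}, the strong consistency $\hat\vartheta_n\to\vartheta_0$, the smoothness~\textbf{(A1)}, and the fact that, by the invertibility Assumption~\textbf{(A2)}, both $\sup_{1\le t\le n}|\tilde e_t(\vartheta)-e_t(\vartheta)|$ and the corresponding quantities for the first two derivatives in $\vartheta$ decay geometrically, uniformly on a neighbourhood of $\vartheta_0$, one obtains $\hat\sigma_{e0}^2\xrightarrow{\mathbb{P}}\sigma_{e0}^2$, $\hat J_n\xrightarrow{\mathbb{P}}J$ and $\hat\Phi_m\xrightarrow{\mathbb{P}}\Phi_m$; since $J$ is invertible this gives $\hat\Lambda=(\hat\Phi_m\hat J_n^{-1}\mid I_m)\xrightarrow{\mathbb{P}}\Lambda$, and in fact $\hat\Lambda-\Lambda=\mathrm{O}_{\mathbb P}(n^{-1/2})$.

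The heart of the proof is $\hat C_m-C_m\xrightarrow{\mathbb{P}}0$. Writing $\hat C_m-C_m=n^{-2}\sum_{t=1}^n\{(\hat S_t-S_t)\hat S_t'+S_t(\hat S_t-S_t)'\}$, the Cauchy--Schwarz inequality and $n^{-2}\sum_{t=1}^n\|S_t\|^2=\mathrm{O}_{\mathbb P}(1)$ reduce the claim to $n^{-2}\sum_{t=1}^n\|\hat S_t-S_t\|^2\xrightarrow{\mathbb{P}}0$, where $\hat S_t-S_t=\sum_{j=1}^t(\hat\Lambda\hat U_j-\Lambda U_j)$, the recentring term being the same residual quantity $\hat\Gamma_m$ in $\hat S_t$ and in $S_t$ (here one uses that the first-order conditions satisfied by $\hat\vartheta_n$ force $\hat\Gamma_m=n^{-1}\sum_{j=1}^n\hat\Lambda\hat U_j+\mathrm{o}_{\mathbb P}(n^{-1/2})$, so centring by a sample mean may be used and contributes only $\mathrm{o}_{\mathbb P}(1)$ to the normalisers). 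I would then expand $\hat\Lambda\hat U_j-\Lambda U_j=(\hat\Lambda-\Lambda)\hat U_j+\Lambda(\hat U_j-U_j)$ and Taylor-expand $\hat U_j-U_j$ around $\vartheta_0$, distinguishing in the partial sum $\sum_{j\le t}(\hat\Lambda\hat U_j-\Lambda U_j)$: the geometrically small initialisation errors, which contribute only $\mathrm{O}_{\mathbb P}(1)$ to any partial sum (by \textbf{(A2)}); the term linear in $\hat\vartheta_n-\vartheta_0=\mathrm{O}_{\mathbb P}(n^{-1/2})$; and a quadratic remainder whose partial sums are $\mathrm{O}_{\mathbb P}(1)$. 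Invoking the ergodic theorem for the averages $n^{-1}\sum_{j\le t}(\cdot)$ and maximal inequalities for their centred parts (both available under \textbf{(A7)}, as in the functional central limit theorem of Theorem~\ref{sn1VARMA}), one checks that the linear term produces a drift that is $\mathrm{O}_{\mathbb P}(\sqrt n)$ but whose two pieces cancel: by the block structure $\Lambda=(\Phi_m J^{-1}\mid I_m)$ of \eqref{Gamma} together with $\mathbb{E}[\tfrac{\partial e_t(\vartheta_0)}{\partial\vartheta}\tfrac{\partial e_t(\vartheta_0)}{\partial\vartheta'}]=\tfrac12\sigma_{e0}^2 J$, the drift through the $\Phi_m J^{-1}$-block equals $-t\,\Phi_m(\hat\vartheta_n-\vartheta_0)+\mathrm{O}_{\mathbb P}(1)$, whereas the drift through the $I_m$-block, coming from $\hat e_j\hat e_{j-h}-e_je_{j-h}$, equals $+t\,\Phi_m(\hat\vartheta_n-\vartheta_0)+\mathrm{O}_{\mathbb P}(1)$. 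Hence $\sup_{1\le t\le n}\|\hat S_t-S_t\|=\mathrm{O}_{\mathbb P}(1)$, so $n^{-2}\sum_{t=1}^n\|\hat S_t-S_t\|^2=\mathrm{O}_{\mathbb P}(n^{-1})=\mathrm{o}_{\mathbb P}(1)$, which completes the proof. I expect this last step --- carrying out the expansion precisely enough to isolate the $\mathrm{O}_{\mathbb P}(\sqrt n)$ drift and verifying that the two copies of $t\,\Phi_m(\hat\vartheta_n-\vartheta_0)$ cancel exactly --- to be the \textbf{main obstacle}; without this cancellation $\hat C_m-C_m$ would be $\mathrm{O}_{\mathbb P}(1)$ rather than $\mathrm{o}_{\mathbb P}(1)$, and it is precisely here that \textbf{(A2)} (so that $\hat e_t$ and the derivatives of $\tilde e_t(\hat\vartheta_n)$ are uniformly well approximated) and \textbf{(A7)} (ergodic theorem and maximal inequalities for all the partial sums that appear) are used.
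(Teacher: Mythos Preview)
Your overall strategy coincides with the paper's: write $\hat C_m=C_m+\Upsilon_n$, argue $\Upsilon_n=o_{\mathbb P}(1)$, and conclude from Theorem~\ref{sn1} by Slutsky. Your treatment is in fact far more detailed than the paper's, which simply asserts that the building-block differences $\upsilon_t^{k,k'},\tilde\upsilon_t^{k,i},\bar\upsilon_t^{i,j}$ are termwise $o_{\mathbb P}(1)$ and passes to $\Upsilon_n=o_{\mathbb P}(1)$ without discussing the partial-sum structure of $S_t$. Your identification of the mean-zero identity $\mathbb{E}\big[\Lambda\,\partial U_t/\partial\vartheta'\big]=\Phi_m J^{-1}(-J)+\Phi_m=0$ is the correct mechanism behind the drift cancellation in $\sum_{j\le t}(\hat\Lambda\hat U_j-\Lambda U_j)$.

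There is, however, a genuine gap where you write $\hat S_t-S_t=\sum_{j\le t}(\hat\Lambda\hat U_j-\Lambda U_j)$ and assert that ``the recentring term [is] the same residual quantity $\hat\Gamma_m$ in $\hat S_t$ and in $S_t$''. By definition $S_t$ is centred by $\Gamma_m$ (the empirical autocovariance of the \emph{unobserved} innovations $e_t$) while $\hat S_t$ is centred by $\hat\Gamma_m$, so in fact
\[
\hat S_t-S_t=\sum_{j\le t}(\hat\Lambda\hat U_j-\Lambda U_j)\;-\;t(\hat\Gamma_m-\Gamma_m).
\]
Your first-order-condition remark does yield $\hat\Gamma_m=n^{-1}\sum_j\hat\Lambda\hat U_j$, but the analogous statement for $\Gamma_m$ fails: $n^{-1}\sum_j\Lambda U_j-\Gamma_m=\Phi_m J^{-1}\,\bar Y$ is $O_{\mathbb P}(n^{-1/2})$, not $o_{\mathbb P}(n^{-1/2})$, and an $O_{\mathbb P}(n^{-1/2})$ shift in the centring changes the normaliser by $O_{\mathbb P}(1)$, not $o_{\mathbb P}(1)$. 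Concretely, $-t(\hat\Gamma_m-\Gamma_m)=-t\,\Phi_m(\hat\vartheta_n-\vartheta_0)+o_{\mathbb P}(t/\sqrt n)$ is a \emph{third} drift of exactly the same size as the two you isolate; since those two already cancel each other, this one remains, giving only $\sup_t\|\hat S_t-S_t\|=O_{\mathbb P}(\sqrt n)$ and hence $\hat C_m-C_m=O_{\mathbb P}(1)$ by your Cauchy--Schwarz route. The paper's terse argument does not address this point either. A clean way to close the gap is to bypass the comparison with $C_m$ altogether: your cancellation identity shows that $n^{-1/2}\sum_{j\le\lfloor nr\rfloor}\hat\Lambda\hat U_j$ and $n^{-1/2}\sum_{j\le\lfloor nr\rfloor}\Lambda U_j$ have the same functional limit, and since $\hat\Gamma_m$ is exactly the sample mean of $\hat\Lambda\hat U_j$ one gets $n^{-1/2}\hat S_{\lfloor nr\rfloor}\Rightarrow\Psi(B_m(r)-rB_m(1))$ directly, hence $\hat C_m\Rightarrow\Psi V_m\Psi'$ jointly with $\sqrt n\,\hat\Gamma_m\Rightarrow\Psi B_m(1)$.
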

Based on the above result, we propose a modified version of the Ljung-Box statistic when one uses the statistic
$$\tilde Q_m^\textsc{sn}= n\, \hat\sigma^4_{e0}  \,\hat\rho_m'  D^{1/2}_{n,m} \hat C_m^{-1}D^{1/2}_{n,m}\hat\rho_m $$
where the matrix $D_{n,m}\in\mathbb R^{m\times m}$ is diagonal with $((n+2)/(n-1),...,(n+2)/(n-m))$ as diagonal terms.
%\begin{remi}
%In \cite{frz}, the authors proposed some modified versions of the Box-Pierce and Ljung-Box statistics that are more difficult to implement because their critical values have to be computed from the data,  whereas those of our new portmanteau statistics  are tabulated once for all. In some sense,  our methods is finally closer to the standard versions which are simply given in a $\chi^2$-table.
%\end{remi}
%
\subsubsection{Explicit form of the matrix $C_m$ in AR$(1)$ or MA$(1)$ case}
The following example gives an explicit form of the matrix $C_m$ in the univariate AR$(1)$ or MA$(1)$ model.

For instance, consider the AR$(1)$ case, with $a_{00}=b_{00}=1$, $a_0=a_{01}$ and $\sigma^2=\sigma_{e0}^2$.
It is classical  that the univariate noise derivatives can be
represented as ${\partial \epsilon_t}/{\partial \vartheta}=\sum_{i=1}^\infty \lambda_i\epsilon_{t-i}$ where  $\lambda_i=-a_0^{i-1}$. Then we deduce that
$J=2/(1-a_0^{2})$ and the expression
$\Phi_m=-\sigma^2  ( 1 \  a_0 \  \cdots \  a_0^{m-1} )'$.
Since $U_t= (
            \begin{array}{cccc}
              -2\sigma^{-2}\sum_{i=1}^\infty \lambda_i\epsilon_{t}\epsilon_{t-i} & \epsilon_{t}\epsilon_{t-1} & \cdots & \epsilon_{t}\epsilon_{t-m} \\
            \end{array}
          )'$ and $ \Lambda=  ( \Phi_m J^{-1} {\vert} I_m  )$,  we obtain
\begin{align*}
\Lambda U_t & =
(1-a_0^{2})\sum_{i=1}^\infty \lambda_i\epsilon_{t}\epsilon_{t-i}( 1\ a_0\ \cdots \ a_0^{m-1})'+ (\epsilon_{t}\epsilon_{t-1}\ \cdots \ \epsilon_{t}\epsilon_{t-m})' \qquad\text{and} \\%
 S_t& =\sum_{j=1}^t
\left[(1-a_0^{2})\sum_{i=1}^\infty \lambda_i\epsilon_{j}\epsilon_{j-i}
( 1\ a_0\ \cdots \ a_0^{m-1})'+ (\epsilon_{j}\epsilon_{j-1}\ \cdots \ \epsilon_{j}\epsilon_{j-m})' - (\Gamma_e(1)\ \cdots \
\Gamma_e(m))' \right].
\end{align*}
For simplicity, we take $m=1$ and we obtain the following simple expression for the normalization matrix
$$C_1=\frac{1}{n^2}\sum_{t=1}^nS_t^2=\frac{1}{n^2}\sum_{t=1}^n\left\{\sum_{j=1}^t
\left(-(1-a_0^{2})\sum_{i=1}^\infty
a_0^{i-1}\epsilon_{j}\epsilon_{j-i}+\epsilon_{j}\epsilon_{j-1}-\Gamma_e(1)\right)\right\}^2.$$
When $p=0$ and $q=1$  the same result holds with $a_0$ replaced by $b_0=b_{01}$.

\section{Numerical illustrations}\label{ne}
In this section, by means of Monte Carlo experiments, we investigate the finite sample
properties of the modified version of the portmanteau tests that we introduced in this work.
The numerical illustrations of this section are made  with the open source
statistical software R (see R Development Core Team, 2015) or (see http://cran.r-project.org/). The tables are gathered in Section \ref{table}.

\subsection{Simulated models}
First of all, we introduce the models that we simulate.
For illustrative purpose, we also consider the standard portmanteau test and the modified
portmanteau test proposed by \cite{frz} (resp. by \cite{yac})
in the univariate ARMA case (resp. in VARMA case).

We indicate the conventions that we adopt in the discussion and in the tables:
\begin{itemize}
%\item $\mathrm{{LB}_{\textsc{sn}}}$ and $\mathrm{BP_{\textsc{sn}}}$ refer to modified test using the statistics $\tilde{Q}_m^{\textsc{sn}}$ and $Q_m^{\textsc{sn}}$
\item $\mathrm{{LB}_{\textsc{frz}}}$ and $\mathrm{BP_{\textsc{frz}}}$ refer to LB and BP tests using  ${Q}_m^\textsc{lb}$ and $Q_m^\textsc{bp}$ as in \cite{frz}
\item $\mathrm{{LB}_{\textsc{s}}}$ and $\mathrm{BP_{\textsc{s}}}$ refer to  LB and BP tests using the statistics \eqref{bp} and \eqref{bpmulti}.
\item $\mathrm{{{LB}}_{\textsc{sn}}}$ and  $\mathrm{{{BP}}_{\textsc{sn}}}$ refer to modified test using the statistics \eqref{bms} and \eqref{bmsbis}
\item $\mathrm{{{LB}}_{\textsc{bm}}}$ and $\mathrm{{{BP}}_{\textsc{bm}}}$ refer to  LB and BP tests with ${{Q}}_m^\textsc{h}$ and ${{Q}}_m^\textsc{c}$ as in \cite{yac}
%\item $\mathrm{{{LB}}_{\textsc{s}}}$ and $\mathrm{{{BP}}_{\textsc{s}}}$ refr to  LB and BP tests using the statistics ${{Q}}_m^\textsc{h}$ and ${{Q}}_m^\textsc{c}$.
\end{itemize}
%%As an example, the $p-$value of the test $\mathrm{BP}_{\textsc{sn}}$ is denoted by $\mbox{p}_{\textsc{sn}}^{\textsc{bp}}$. We adopt similar notations for the other tests.
Compared to the modified
portmanteau test proposed by \cite{frz} (resp. by \cite{yac}),
we use a vector autoregressive (VAR) spectral estimator approach to estimate the asymptotic
covariance matrix of a vector autocorrelations residuals.
The implementation of this method requires a choice of the VAR order $r$.
In the strong (V)ARMA cases we fixed $r=1$. By contrast, in the weak (V)ARMA cases
the VAR order $r$  is set as $r=1,\dots,5$ and is automatically
selected by Akaike Information Criterion (AIC) using the
function {\tt VARselect()} of the {\bf vars} R package.

The  $p$-values of the modified portmanteau tests, introduced by \cite{frz,yac}, are
computed using the Imhof  algorithm (see \cite{i1961}) and by using the function
{\tt imhof()}  of the R package {\bf CompQuadForm}.

We notice that in the tables, the numerical results using the Ljung-Box tests are very close to those of the Box-Pierce tests. Nevertheless, they are still presented here  for the sake of completeness. Other tests  could have been considered as well but our aim is not to make an exhaustive comparative study.

\subsubsection{Univariate ARMA case}
\label{univARMA}
To generate the strong and the weak ARMA models, we consider the
following ARMA$(1,1)$ model
\begin{eqnarray}
\label{ARMA01MonteCarlo}
X_{t}=a_0X_{t-1}+\epsilon_{t}+b_0\epsilon_{t-1},
%\text{ and where }
\end{eqnarray}
with $\vartheta_0=(a_0,b_0)'=(0.95,-0.6)'$ and the innovation process  $\epsilon$ follows a strong or weak white noise.  Different weak ARMA$(1,1)$ models are simulated with various examples of weak white noises.

%Obviously the strong white noise assumption is more restrictive than the weak white noise assumption, because independence entails uncorrelatedness.
%Therefore, a strong white noise is obviously a weak white noise, but the reverse is not true.

The  generalized autoregressive conditional heteroscedastic (GARCH) models is an important example of weak white noises in the univariate case (see \cite{FZ2010}).
So we first assume that in (\ref{ARMA01MonteCarlo}) the
innovation process $\epsilon$ is the following GARCH$(1,1)$ model
%({\em i.e.} a GARCH$(p,q)$ model with $p=0,\; q=1$)
defined by
\begin{equation} \label{bruitARCH}
\left\{\begin{array}{l}\epsilon_{t}=\sigma_t\eta_{t}\\
\sigma_t^2=1+\alpha_1\epsilon_{t-1}^2+\beta_1\sigma_{t-1}^2
\end{array}\right.
\end{equation}
where  $(\eta_t)_{t\ge 1}$ is a sequence of iid standard Gaussian random variables. % with $\e(\eta_t)=0$ and $\e(\eta_t^2)=1$.

%with parameter $\alpha_1=0,4$.
We propose three other sets of experiments with innovation processes $\epsilon$ in (\ref{ARMA01MonteCarlo}) defined by \begin{align}
\label{PT}
\epsilon_{t}& =\eta_{t}\eta_{t-1}, \\
\label{PTcarre}
\epsilon_{t}& =\eta_{t}^2\eta_{t-1}, \\
\label{RT}
\epsilon_{t}& =\eta_{t}(|\eta_{t-1}|+1)^{-1}.
\end{align}
The example \eqref{PT} was proposed in \cite{rt1996} and \eqref{PTcarre} and  \eqref{RT} are extensions of other types of noise process in \cite{rt1996}.
To generate the strong ARMA, we assume that in
(\ref{ARMA01MonteCarlo}) the innovation process follows (\ref{bruitARCH}) with $(\alpha_1{,}\beta_1)=(0{,}0)$.
Contrary  to \eqref{PT} and \eqref{RT}, the noise defined in \eqref{PTcarre} is not a martingale difference sequence for which the limit theory is more classical.
\subsubsection{Multivariate ARMA case}\label{multiARMA}
This section is a direct extension of the univariate section \ref{univARMA}.
Now we repeat the same experiment on different weak and strong  VARMA$(1,1)$ models.
In order to ensure the uniqueness of a VARMA representation, we consider the following bivariate VARMA$(1,1)$ model  considered in \cite{reinsel97} (chap. 3, example 3.2 p. 81)
\begin{eqnarray}
\label{VARMA11MonteCarlo}
\left(\begin{array}{c}X_{1,t}\\X_{2,t}\end{array}\right)
&=&\left(\begin{array}{cc}a_{11,1}&a_{12,1}\\a_{21,1}&a_{22,1}\end{array}\right)
\left(\begin{array}{c}X_{1,t-1}\\X_{2,t-1}\end{array}\right)
+\left(\begin{array}{c}\epsilon_{1,t}\\
\epsilon_{2,t}\end{array}\right)
- \left(\begin{array}{cc}b_{11,1}&b_{12,1}\\b_{21,1}&b_{22,1}\end{array}\right)
\left(\begin{array}{c}\epsilon_{1,t-1}\\
\epsilon_{2,t-1}\end{array}\right)\qquad
\end{eqnarray}
with $\vartheta_0=(a_{11,1},a_{21,1},a_{12,1},a_{22,1},b_{11,1},b_{21,1},
b_{12,1},b_{22,1})'=(1.2,0.6,-0.5,0.3,-0.6,0.3,0.3,0.6)'$ and $\epsilon_t=(\epsilon_{1,t},\epsilon_{2,t})'$ that follows a strong or weak white noise.
Note that $$\det\left\{I_2-z_1\left(\begin{array}{cc}1.2&-0.5\\0.6&0.3\end{array}\right)\right\}=0 \text{ and }\det\left\{I_2-z_2\left(\begin{array}{cc}-0.6&0.3\\0.3&0.6\end{array}\right)\right\}=0$$  for $z_1=1.136\pm 0.473i$ (and hence $|z_1|=1.23$) and  for $z_2=\pm (0.45)^{-1/2}$ (and hence $|z_2|=1.49$). Hence Model (\ref{VARMA11MonteCarlo}) can be viewed as a bivariate VARMA$(1,1)$ model  in echelon form ($\text{ARMA}_E(1,1)$) considered in \cite{L2005} (chap. 12, definition 12.2 p. 453).

Let $\eta=((\eta_{1,t},\eta_{2,t})')_{t\ge 1}$ be an iid sequence of random variables such that
\begin{equation*}% \label{bruitfortbis}
\left(\begin{array}{c}\eta_{1,t}\\\eta_{2,t}\end{array}\right) \  \overset{\text{law}}{=} \ {\cal
N}(0,I_2).
\end{equation*}
We first consider the strong VARMA case by assuming that  the innovation process $\epsilon=((\epsilon_{1,t},\epsilon_{2,t})')_{t\ge 1}$ in
(\ref{VARMA11MonteCarlo}) is defined by an iid sequence such that
\begin{equation} \label{bruitfort}
\left(\begin{array}{c}\epsilon_{1,t}\\\epsilon_{2,t}\end{array}\right) \  \overset{\text{law}}{=} \ {\cal
N}(0,I_2).
\end{equation}
The GARCH models have numerous extensions to the multivariate framework (see
\cite{blr2006} for a
review). Jeantheau (see \cite{j1998}) has proposed a simple extension of
the multivariate GARCH$(p,q)$ with conditional constant correlation.
For simplicity we consider the following bivariate ARCH$(1)$ model
proposed in \cite{j1998} and defined by
\begin{eqnarray}\label{ARCH1}
\left(\begin{array}{c}\epsilon_{1,t}\\\epsilon_{2,t}\end{array}\right)
&=&\left(\begin{array}{cc}h_{11,t}&0\\0&h_{22,t}\end{array}\right)
\left(\begin{array}{c}\eta_{1,t}\\\eta_{2,t}\end{array}\right)
\end{eqnarray}
where
\begin{eqnarray*}
\left(\begin{array}{c}h_{11,t}^{2}\\h_{22,t}^{2}\end{array}\right)=
\left(\begin{array}{c}0.3\\0.2\end{array}\right)+
\left(\begin{array}{cc}0.45&0.00\\0.40&0.25\end{array}\right)
\left(\begin{array}{c}\epsilon_{1,t-1}^{2}\\
\epsilon_{2,t-1}^{2}\end{array}\right).
\end{eqnarray*}
First we assume that in (\ref{VARMA11MonteCarlo}) the
innovation process $\epsilon$ is an ARCH$(1)$ model defined in (\ref{ARCH1}).
%with $c_1=0.3$, $c_2=0.2$, $a_{11}=0.45$, $a_{21}=0.4$
%and $a_{22}=0.25$.
In three other sets of experiments, we assume that in (\ref{VARMA11MonteCarlo})
the noise process $\epsilon$ is defined by
\begin{align}
\label{multiPT}
\left(\begin{array}{c}\epsilon_{1,t}\\\epsilon_{2,t}\end{array}\right) & =
\left(\begin{array}{c}\eta_{1,t}\eta_{2,t-1}\eta_{1,t-2}\\\eta_{2,t}\eta_{1,t-1}\eta_{2,t-2}\end{array}\right), \\ \label{multiPTcarre}
\left(\begin{array}{c}\epsilon_{1,t}\\\epsilon_{2,t}\end{array}\right) & =
\left(\begin{array}{c}\eta_{1,t}^2\eta_{2,t-1}\eta_{1,t-2}\\\eta_{2,t}^2\eta_{1,t-1}\eta_{2,t-2}\end{array}\right), \\
 \label{multiRT}
\left(\begin{array}{c}\epsilon_{1,t}\\\epsilon_{2,t}\end{array}\right) & =
\left(\begin{array}{c}\eta_{1,t}(|\eta_{1,t-1}|+1)^{-1}\\\eta_{2,t}(|\eta_{2,t-1}|+1)^{-1}\end{array}\right).
\end{align}
These noises are direct extensions of the weak noises defined in
\cite{rt1996} in the univariate case.

\subsection{Empirical size}

We first simulate  $N=1,000$ independent trajectories of size $n=10,000$ of models (\ref{ARMA01MonteCarlo}) and (\ref{VARMA11MonteCarlo}) (the same series of random numbers is used to generate the noises for the different cases). The same series is  partitioned as three series of sizes $n=500$, $n=2,000$ and $n=10,000$.
For each of these $N$ replications,  we use the quasi-maximum likelihood estimation method to estimate the coefficient $\vartheta_0$ and we apply portmanteau tests to the residuals for different values of $m$, where $m$ is the number of autocorrelations used in the portmanteau test statistic. For instance, $m\in\{1,\dots,5\}$ in the VARMA case and $m\in\{1,2,3,6,12\}$ in the univariate ARMA case.
The nominal asymptotic level of the tests is $\alpha=5\%$.
%For the nominal level $\alpha=5\%$, the
%empirical  size over the $N=1,000$ independent replications should
%vary between the significant limits 3.6\% and 6.4\% with probability
%95\%. When   the relative rejection frequencies  are outside the
%significant limits, they are displayed in bold type in Tables \ref{tab1}, 2, 3 and 4.

\subsubsection{Strong ARMA and VARMA models case}
We consider the strong ARMA model (\ref{ARMA01MonteCarlo})--(\ref{bruitARCH}), with $(\alpha_1{,}\beta_1)=(0{,}0)$ and of strong VARMA model (\ref{VARMA11MonteCarlo})--(\ref{bruitfort}).

For the standard  Box-Pierce test, the model is therefore rejected when the statistic $Q_m^{\textsc{bp}}$ or $Q_m^{\textsc{lb}}$ (resp. $Q_m^{\textsc{c}}$ or $Q_m^{\textsc{h}}$) is larger than $\chi_{(m-2)}^2(0.95)$ (resp. than $\chi_{(4m-8)}^2(0.95)$) in a univariate ARMA$(1,1)$ case (resp. in a VARMA$(1,1)$ case).
We know that the asymptotic level of this test  is indeed $\alpha=5\%$ when $\vartheta_0=(0,0)'$ (resp. $\vartheta_0=0\in\mathbb R^8$)  in a univariate ARMA$(1,1)$ case (resp. in a bivariate VARMA$(1,1)$ case).
Note however that, even when the noise is strong,  the asymptotic level is not exactly $\alpha=5\%$ when $\vartheta_0\neq(0,0)'$ (resp. $\vartheta_0\neq0\in\mathbb R^8$)  in a univariate ARMA case (resp. in a bivariate VARMA case).

For the proposed modified test $\mathrm{BP}_\textsc{sn}$ or $\mathrm{LB}_\textsc{sn}$, the model is rejected when the statistic $
Q_m^\textsc{sn}$ or $\tilde{Q}_m^\textsc{sn}$ is larger than $\mathcal{U}_m(0.95)$ in the univariate ARMA case and than $\mathcal{U}_{4m}(0.95)$ in the bivariate VARMA case, where the critical values $\mathcal{U}_K(0.95)$ (for $K=1,\dots,20$) are tabulated  in Lobato (see Table 1 in \cite{lobato}).

Table \ref{tab1ARMA}  (resp. Table \ref{tab1VARMA}) displays the relative rejection frequencies of the null hypothesis $H_0$ that the  data generating process follows an ARMA$(1,1)$ (resp. a bivariate VARMA$(1,1)$),
over the $N$ independent replications. When one uses the statistic $Q_m^\textsc{bp}$ or $Q_m^\textsc{lb}$, the observed relative rejection frequency of the standard Box-Pierce or Ljung-Box test is very far from the nominal level $\alpha=5\%$ when the number $m$ of autocorrelations  used in the statistic is small. This observation is in accordance with the results in the literature on the standard (V)ARMA models. The theory  that the $\chi_{(m-2)}^2$ (resp. the $\chi_{(4m-8)}^2(0.95)$) approximation
 is better for larger $m$ is confirmed. In contrast, our modified test
better controls the error of first kind in the univariate and multivariate ARMA model, even when $m$ is small.
Note that the  tests based on $\mathrm{{LB}}_\textsc{bm}$, ${\mathrm{BP}}_\textsc{bm}$, $\mathrm{BP}_\textsc{frz}$ or $\mathrm{LB}_\textsc{frz}$ control well the error of first kind in the univariate ARMA case (see Model I of Table \ref{tab1ARMA}) and also in the bivariate VARMA case (see Model I of Table \ref{tab1VARMA}). Note that for $m\leq2$, the empirical size is not available (n.a.) for the standard Box-Pierce or Ljung-Box tests because they are not applicable when $m\leq 2$. For $n=200$ (the results are not reported here but are presented in the on-line extended version) our test still accurately controls the type I error when $m\leq5$ in the multivariate case and for $m\leq 12$ in the univariate case.

From these examples we draw the conclusion that  the proposed modified version are preferable to the standard ones in these univariate and multivariate strong ARMA models.

\subsubsection{Weak ARMA and VARMA models case}

We repeat the same experiments on weak (V)ARMA models (see the different models that are proposed in the tables).
We first consider the univariate  weak ARMA$(1,1)$ models.
As expected, Tables \ref{tab1ARMA} and  \ref{tab1ARMAsuite}  show that the
standard  $\mathrm{{LB}}_\textsc{s}$ or $\mathrm{{BP}}_\textsc{s}$ test poorly performs in assessing the adequacy of
these weak ARMA models (see Models II, III, IV and V in the tables). In view of the observed relative rejection frequencies, the standard test rejects very often the true ARMA$(1,1)$ and all the relative rejection frequencies are very far from the nominal $\alpha=5\%$. Our modified test and the  tests based on $\mathrm{BP}_\textsc{frz}$ or $\mathrm{LB}_\textsc{frz}$
control well the error of first kind for these weak ARMA models except for  Model  III when $n=500$ and $m=12$ (see Table \ref{tab1ARMAsuite}).

When $n=200$ (the results are not reported here, see the on-line extended version for precisions) our test still accurately controls the type I error when $m\leq 12$ for Model V, but the results are less satisfactory for the other cases.

Now, we consider the multivariate  weak ARMA$(1,1)$ models.
As expected, Tables \ref{tab1VARMA} and \ref{tab1VARMAsuite} show that the
standard tests $\mathrm{{LB}}_\textsc{s}$ and $\mathrm{{BP}}_\textsc{s}$ poorly perform in assessing the adequacy of all these weak VARMA models.
Table \ref{tab1VARMAsuite} shows that the error of first kind is
well controlled by all the tests $\mathrm{{LB}}_\textsc{sn}$, ${\mathrm{BP}}_\textsc{sn}$, $\mathrm{BP}_\textsc{bm}$ and $\mathrm{LB}_\textsc{bm}$ in the particular case of Model V.
For the models II, III and IV (see Table \ref{tab1VARMA} and \ref{tab1VARMAsuite}) the error of first kind is also
well controlled by our modified test when $n=10,000$. When $n\le 2,000$ the results are less satisfactory except for Model II when $n=2,000$.
When $n=500$, the results are acceptable for $m=1$. This is not surprising since the number of parameters in the model is eight and since the dependence structure is complex.

For $n=200$ (the results are not reported here) our test still accurately controls the type I error for Model V, but the results are not satisfactory in the other weak VARMA cases.

As a conclusion, the tests based on $Q_m^\textsc{sn}$  and $\tilde Q_m^\textsc{sn}$ can be used safely for small and large $m$ for both ARMA and VARMA models.

\subsection{Empirical power}\label{emp-power}
In this part we present the results on the univariate ARMA models. The results on the VARMA models are not presented here because the numerical results and
their interpretations are very similar to those we will now discuss in the univariate case. Nevertheless, some tables are proposed in the extended version of this work.

In this section we repeat the same experiments as in Section \ref{univARMA} to examine the power of the tests for the null hypothesis of an ARMA$(1,1)$ against the following ARMA$(2,1)$ alternative defined by
\begin{eqnarray}
\label{lutkepohl}
X_{t}=X_{t-1}-0.2X_{t-2}+\epsilon_{t}+0.8\epsilon_{t-1},
\end{eqnarray}
where the innovation process  $\epsilon$ follows a strong or weak white noise introduced in Section \ref{univARMA}.

%Secondly, we simulated $N=1,000$ independent trajectories of
%different size the following bivariate VARMA$(2,2)$, in the echelon form,  defined by
%\begin{eqnarray}
%\label{lutkepohlVARMA}
%\left(\begin{array}{c}X_{1,t}\\X_{2,t}\end{array}\right)
%&=&\left(\begin{array}{cc}0&0\\0&0.225\end{array}\right)
%\left(\begin{array}{c}X_{1,t-1}\\X_{2,t-1}\end{array}\right)+\left(\begin{array}{cc}0&0\\0&0.100\end{array}\right)
%\left(\begin{array}{c}X_{1,t-2}\\X_{2,t-2}\end{array}\right)\nonumber\\
%&&+\left(\begin{array}{c}\epsilon_{1,t}\\\epsilon_{2,t}\end{array}\right)-\left(\begin{array}{cc}0&0\\-0.313& 0.250\end{array}\right)
%\left(\begin{array}{c}\epsilon_{1,t-1}\\\epsilon_{2,t-1}\end{array}\right)\nonumber\\
%&&-\left(\begin{array}{cc}0&0\\-0.140&-0.160\end{array}\right)
%\left(\begin{array}{c}\epsilon_{1,t-2}\\\epsilon_{2,t-2}\end{array}\right),
%\end{eqnarray}
%where the multivariate, strong and weak, innovation process $(\boldsymbol{\epsilon}_t)$ is given Section \ref{multiARMA}.
For each of these $N$ replications we fit an ARMA$(1,1)$ model
and perform standard and modified tests based on $m=1,2,3$, $6$ and $12$ residual autocorrelations.

For these particular strong and weak ARMA models, we have seen that the actual level of the standard version is generally very different from  the $5\%$ nominal level (see Tables \ref{tab1ARMA} and \ref{tab1ARMAsuite}). In order to compare the powers  of the three tests on an equal basis,  we  use (as in  \cite{H1996}) the empirical critical values based on $N=1000$ replications. This
power is usually referred as size  adjusted power.  Tables \ref{tab2ARMA} and  \ref{tab2ARMAsuite} display the relative
rejection frequencies of over the $N$ independent replications for the univariate ARMA models.

Thanks to these examples, we notice that the standard and modified versions  tests have very similar powers when $n\geq2,000$. In contrast,
when  $n=500$ our tests are clearly less powerful than the standard ones. But we repeat that our tests are usually closer from $5\%$ nominal level.

Note that, the empirical critical values strongly depend on the type of weak ARMA which is generated under the null hypothesis. Therefore, this method consisting
in adjusting the critical values only works for very specific hypotheses.  We also think that this method may be distorted by a too small number of replication that yield to an inaccurate estimation of the empirical critical value. However, in terms of a day-to-day application, we usually wouldn't do this. In that case, what's of interest is the "raw power" of the test, that is to say the rate at which it rejects false null hypotheses when the asymptotic ("wrong") critical value is used. The study of the ''raw-power'' is presented in the extended version of our work.

\section{Illustrative example}\label{ill}
We now consider an application to the daily $\log$ returns (also simply called the returns) of the CAC40\footnote{
The CAC 40 index is a linear combination of a selection of 40 shares on the Paris Stock Exchange (CAC
stands for <<Cotations Assist\'{e}es en Continu>>). }.
The observations cover the period from March 1, 1990 to July 26, 2010.
The length of the series is $n=5154$. The data can be downloaded
from the website Yahoo Finance: http://fr.finance.yahoo.com/.
Figure~\ref{figcac40} plots the closing prices and returns of the CAC40 index from March 1, 1990 to July 26, 2010.
\begin{figure}[h!]
\centering
\includegraphics[width=0.9\textwidth]{}
\caption{\label{figcac40} {\footnotesize Closing prices and returns of the CAC40 index from March 1, 1990 to July 26, 2010 (5154 observations). }}
\end{figure}
%\begin{figure}[hbt]
%\vspace*{12cm} \protect \special{psfile=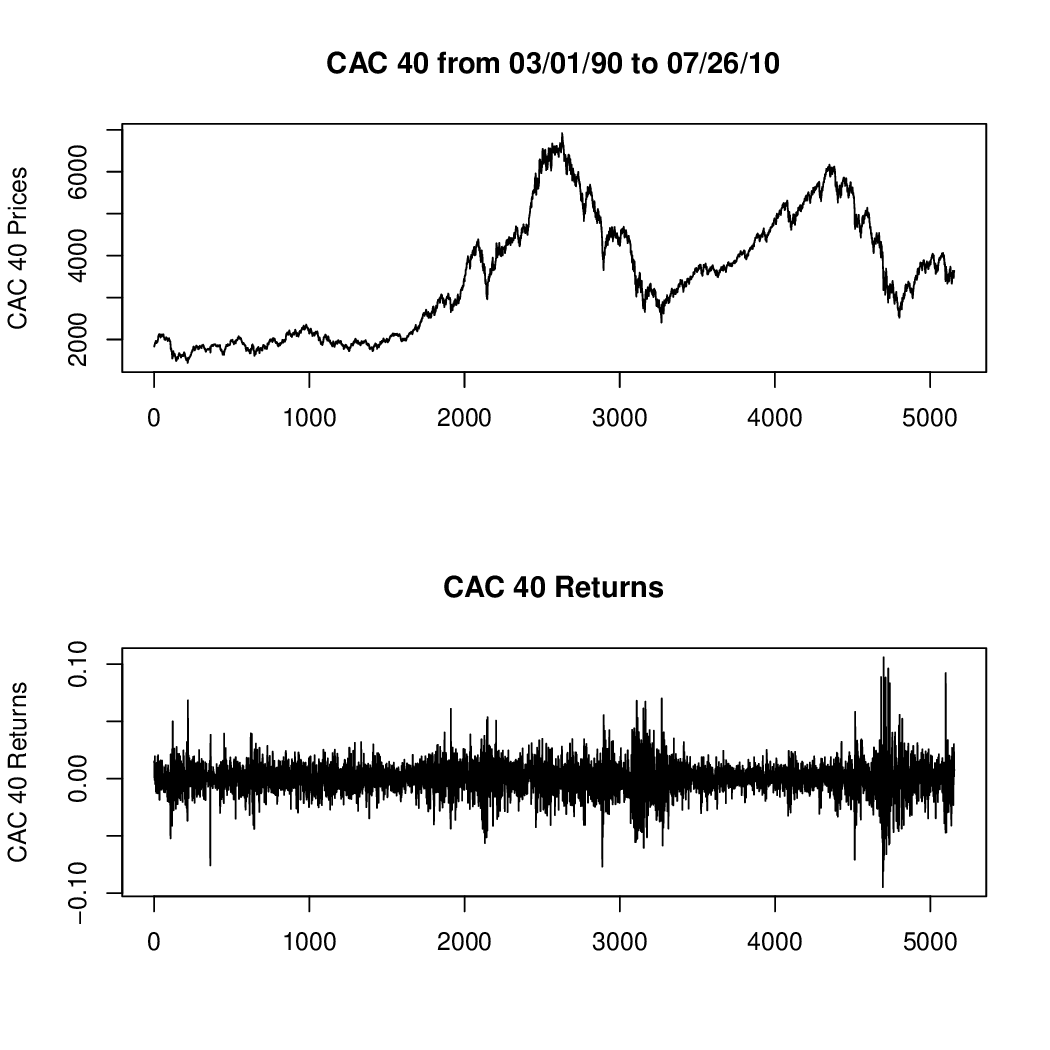
%voffset=-200 hoffset=-45 hscale=80 vscale=80} \vspace*{2.5cm}
%\caption{\label{figcac40} {\footnotesize Closing prices and returns of the CAC40 index from March 1, 1990 to July 26, 2010 (5154 observations). }}
%\end{figure}
It shows that the CAC index series are generally close to a random
walk without intercept and that the returns are generally compatible with the second-order stationarity
assumption.
%For instance, the returns oscillate around zero. The oscillations vary a great deal in magnitude, but are almost constant
%in average over long subperiods.
In financial econometrics, the returns are often assumed to be martingale increments
(though they are not generally independent
sequences). Moreover it is commonly accepted that the squares of the returns have second-order moments close to those of an ARMA(1,1)
(which is compatible with a GARCH(1,1) model for the returns).
We will test these hypotheses by fitting weak ARMA models on the returns and on their squares.

First, we apply portmanteau tests
for checking the hypothesis that the CAC40 returns constitute a white noise. Table \ref{cac40BB} displays the statistics of the standard and modified tests.

Since the $p$-values of the standard test are very small, the white noise hypothesis is rejected at the nominal level $\alpha=1\%$.
This is not surprising because the standard tests requires the iid assumption and it is well known that the strong white noise model is not adequate for these series (one can think about the
so-called volatility clustering phenomena). In contrast, the  white noise hypothesis is not rejected by the modified tests since the statistic  is not larger than the critical values (see Table 1 in \cite{lobato}). %This is in accordance with other
%works devoted to the analysis of stock-market returns (see Lobato, Nankervis and Savin (2001)).
To summarize, the outputs of Table~\ref{cac40BB} are in accordance with the
common belief that these series are not strong white noises, but could be weak white noises. This is also in accordance with other works devoted to the analysis of stock-market returns \cite{lobatoNS2001}.

Then we focus on the dynamics of the squared returns and we fit an ARMA$(1,1)$ model to the squares of the CAC40 returns.
Denoting by $(X_t)_{t\ge 1}$ the mean corrected
series of the squared returns, we obtain the following model
\begin{eqnarray*}
X_{t}=0.97942X_{t-1}+\epsilon_{t}-0.89094\epsilon_{t-1},
\text{ where }\mbox{Var}(\epsilon_{t})=23.5302\times10^{-8}.
\end{eqnarray*}
Table \ref{cac40ARMA11} displays the statistics of the standard and modified LB and BP tests.
From Table \ref{cac40ARMA11} we draw the same conclusion on the squares of the previous daily returns: the strong ARMA$(1,1)$ model is
rejected, but a weak ARMA$(1,1)$ model is not rejected. Note that the first and second-order structures that we found for the CAC40 returns, namely
a weak white noise for the returns and a weak ARMA$(1,1)$ model for the squares of the returns, are
compatible with a GARCH$(1,1)$ model.

We emphasize the fact that the assumption of second-order stationarity  can be considered
for this series. Indeed the estimated autoregressive coefficient is $0.97942<1$ and the estimated standard deviation is 0.00569. The procedure described in Proposition 8.1 p. 186 in \cite{FZ2010} yields a  $p$-value equals to 0,00799 for the test of the second-order stationarity assumption.

We mention that another illustrative example with the Standard \& Poor's 500 index is proposed in the extended version of this work.
\begin{center}
{\bf Acknowledgements}
\end{center}
The authors thank the referees for the very careful reading of the paper.

The research of Y. Boubacar Ma\"{i}nassara was supported
by a BQR (Bonus Qualit\'{e} Recherche) of the Universit\'{e} de Franche-Comt\'{e}.
%\newpage
%%%%%%%%%%%%%%%%%%%%%%%%%%%%%%%%%%%%%%%%%%%%%%%%%%%%%%%%%%%%%%%%%%%%%%%%%%%%%%%%%%
%%%%%%%%%%%%%%%%%%%%%%%%%%%%%%%%%%%%%%%%%%%%%%%%%%%%%%%%%%%%%%%%%%%%%%%%%%%%%%%%%%
%%%%%%%%%%%%%%%%%%%%%%%%%%%%%%%%%%%%%%%%%%%%%%%%%%%%%%%%%%%%%%%%%%%%%%%%%%%%%%%%%%
%%%%%%%%%%%%%%%%%%%%%%%%%%%%%%%%%%%%%%%%%%%%%%%%%%%%%%%%%%%%%%%%%%%%%%%%%%%%%%%%%%
%\bibliographystyle{apalike}
%\bibliography{biblio-yac}

%%%%%%%%%%%%%%%%%%%%%%%%%%%%%%%%%%%%%%%%%%%%%%%%%%%%%%%%%%%%%%%%%%%%%%%%%%%%%%%%%%
%%%%%%%%%%%%%%%%%%%%%%%%%%%%%%%%%%%%%%%%%%%%%%%%%%%%%%%%%%%%%%%%%%%%%%%%%%%%%%%%%%
%%%%%%%%%%%%%%%%%%%%%%%%%%%%%%%%%%%%%%%%%%%%%%%%%%%%%%%%%%%%%%%%%%%%%%%%%%%%%%%%%%
%%%%%%%%%%%%%%%%%%%%%%%%%%%%%%%%%%%%%%%%%%%%%%%%%%%%%%%%%%%%%%%%%%%%%%%%%%%%%%%%%%
%\newpage
\section{Tables}\label{table}

\newpage

\begin{table}[hbt]
 \caption{\small{Empirical size (in \%) of the modified and standard versions
 of the LB and BP tests in the case of ARMA$(1,1)$. %: model
%(\ref{ARMA01MonteCarlo}), with the parameter $\theta_0=(0.95,-0.6)'$ and $\alpha_1=0.4$.
The nominal asymptotic level of the tests is $\alpha=5\%$.
The number of replications is $N=1000$. }}
\begin{center}
%\begin{tabular}{lll rrr rrr}
\begin{tabular}{ccc ccc ccc}
\hline\hline \\
Model& Length $n$ & Lag $m$ & $\mathrm{{LB}}_{\textsc{sn}}$&$\mathrm{BP}_{\textsc{sn}}$&$\mathrm{{LB}}_{\textsc{frz}}$&$\mathrm{BP}_{\textsc{frz}}$&$\mathrm{{LB}}_{\textsc{s}}$&$\mathrm{BP}_{\textsc{s}}$
\vspace*{0.2cm}\\\hline
&& $1$&4.6 &4.5 &5.3& 5.3&n.a. &n.a.\\
&& $2$&3.9  & 3.9  & 5.4  & 5.4&n.a. &n.a.\\
I &$n=500$& $3$&3.6 & 3.6&  4.6&  4.4 &14.5 &14.3\\
 &&$6$&3.9& 3.7& 4.7& 4.6& 7.2& 7.2\\
 &&$12$&3.4& 3.2& 4.7& 4.1& 5.6& 5.2\\
  \cline{2-9}
&& $1$&4.6& 4.6& 5.1& 5.1&n.a. &n.a.\\
&& $2$&5.1  & 5.1  & 5.7  & 5.7&n.a. &n.a.\\
I &$n=2,000$& $3$&4.0 & 3.9 & 4.8 & 4.8& 13.8& 13.8\\
 &&$6$&3.5& 3.5& 5.0& 4.9& 7.2& 7.1\\
 &&$12$&5.1& 5.0 &5.3 &5.3& 5.9& 5.8\\
  \cline{2-9}
&& $1$&7.0& 7.0 &5.6& 5.6 &n.a. &n.a.\\
&& $2$&5.8  & 5.8  & 6.9  & 6.9&n.a. &n.a.\\
I &$n=10,000$& $3$&5.4 & 5.4 & 5.9 & 5.9& 16.0& 16.0\\
 &&$6$&5.9& 5.9& 5.6& 5.6& 8.6& 8.6\\
 &&$12$&4.6& 4.4& 6.2& 6.2& 7.3& 7.3\\

\hline
&& $1$& 3.2& 3.1& 4.7& 4.7&n.a. &n.a.\\
&& $2$&3.6 &  3.6  & 5.1  & 5.1&n.a. &n.a.\\
II &$n=500$& $3$&3.0 & 3.0 & 3.6 & 3.5& 25.6& 25.3\\
 &&$6$&1.8 & 1.8 & 2.4&  2.2& 17.0& 16.5\\
 &&$12$&4.8 & 4.7 & 6.4 & 6.0& 17.1& 16.0\\
 \cline{2-9}
&& $1$&4.7& 4.6& 4.2& 4.1&n.a. &n.a.\\
&& $2$&4.7  & 4.7  & 5.0  & 5.0&n.a. &n.a.\\
II &$n=2,000$& $3$&4.4 & 4.4 & 4.3 & 4.3 &26.8 &26.6\\
 &&$6$& 3.2 & 3.2 & 4.2&  4.1& 19.1& 19.0\\
 &&$12$&7.3 & 7.3 & 4.6 & 4.6 &19.9& 19.7\\
  \cline{2-9}
&& $1$&5.8& 5.8& 4.1& 4.1&n.a. &n.a.\\
&& $2$&4.6  & 4.6  & 5.0  & 5.0&n.a. &n.a.\\
II &$n=10,000$& $3$&5.5 & 5.5 & 5.1 & 5.1& 25.0& 25.0\\
 &&$6$& 4.1 & 4.1 & 4.8&  4.7& 19.5& 19.5\\
 &&$12$&6.9 & 6.9 & 3.7 & 3.7& 22.5& 22.5\\
  \hline
&& $1$& 2.9& 2.9& 3.9& 3.7&n.a. &n.a.\\
&& $2$&3.6  & 3.6  & 3.5  & 3.4&n.a. &n.a.\\
III &$n=500$& $3$&3.2 & 3.2 & 3.4 & 3.4& 20.9& 20.7\\
 &&$6$&2.7 & 2.7 & 2.3 & 2.2 &10.8 & 9.8\\
 &&$12$&1.1& 1.1& 2.5& 1.9& 7.4& 6.9\\

 \cline{2-9}
&& $1$&4.9 &4.8& 4.2& 4.2&n.a. &n.a.\\
&& $2$&4.5  & 4.5  & 4.5  & 4.5&n.a. &n.a.\\
III &$n=2,000$& $3$&5.5 & 5.5 & 4.3 & 4.3 &21.3& 21.0\\
 &&$6$&4.7 & 4.7 & 3.8 & 3.8& 11.4& 11.3\\
 &&$12$&4.4& 4.4& 3.2& 3.0& 8.8& 8.7\\
  \cline{2-9}
&& $1$&4.5& 4.5& 5.1& 5.1&n.a. &n.a.\\
&& $2$&4.7  & 4.7  & 4.9  & 4.9&n.a. &n.a.\\
III &$n=10,000$& $3$&4.4 & 4.4 & 4.8 & 4.8& 22.3& 22.3
\\
 &&$6$&5.0 & 5.0&  5.4 & 5.4 &11.9 &11.9\\
 &&$12$&3.9& 4.0& 4.9& 4.8& 8.0& 8.0\\
\hline\hline
\\
\multicolumn{9}{l}{I: Strong ARMA$(1,1)$ model
(\ref{ARMA01MonteCarlo})-(\ref{bruitARCH}) with $(\alpha_1,\beta_1)=(0,0).$}\\
\multicolumn{9}{l}{II: Weak ARMA$(1,1)$ model
(\ref{ARMA01MonteCarlo})-(\ref{bruitARCH}) with $(\alpha_1,\beta_1)=(0.1,0.85)$.}\\
\multicolumn{9}{l}{III: Weak ARMA$(1,1)$ model
(\ref{ARMA01MonteCarlo})-(\ref{PT}).}\\
%\multicolumn{9}{l}{$\mathrm{\tilde{LB}_{\textsc{sn}}}$ and $\mathrm{BP_{\textsc{sn}}}$ refer to modified test using the statistics $\tilde{Q}_m^{\textsc{sn}}$ and $Q_m^{\textsc{sn}}$.}
%\\
%\multicolumn{9}{l}{$\mathrm{\tilde{LB}_{\textsc{frz}}}$ and $\mathrm{BP_{\textsc{frz}}}$ refer to LB and BP tests using  $\tilde{Q}_m^\textsc{lb}$ and $Q_m^\textsc{bp}$ as in \cite{frz}.}\\
%\multicolumn{9}{l}{$\mathrm{\tilde{LB}_{\textsc{s}}}$ and $\mathrm{BP_{\textsc{s}}}$ refer to  LB and BP tests using the statistics $\tilde{Q}_m^\textsc{lb}$ and $Q_m^\textsc{bp}$.}
%\\
\end{tabular}
\end{center}
\label{tab1ARMA}
\end{table}

%\item
%\item ;
%\item $\mathrm{\tilde{BP}_S}$ for the standard Box-Pierce test using the statistic $\tilde{Q}_m^\textsc{c}$;
%\item $\mathrm{\tilde{LB}_S}$ for the standard Ljung-Box test using the statistic $\tilde{Q}_m^\textsc{h}$;
%\item $\mathrm{BP}_{\textsc{sn}}$ for the modified test using the statistic $Q_m^\textsc{sn}$ with the values of the quantiles of $\mathcal U_m$  simulated in Table 1 of \cite{lobato};
%\item $\mathrm{\tilde{BP}}_{\textsc{sn}}$ for the modified test using the statistic $\tilde Q_m^\textsc{sn}$ with the values of the quantiles of $\mathcal U_{md^2}$ simulated in Table 1 of \cite{lobato};
%\item $\mathrm{BP}_{\textsc{frz}}$ for the standard Ljung-Box test using the statistic $Q_m^\textsc{bp}$ and the method presented in \cite{frz};
%\item $\mathrm{LB}_{\textsc{frz}}$ for the standard Box-Pierce test using the statistic $Q_m^\textsc{lb}$ and the method presented in \cite{frz}.
%\item $\mathrm{\tilde{BP}}_{\textsc{bm}}$ for the standard Ljung-Box test using the statistic $\tilde{Q}_m^\textsc{c}$ and the method presented in \cite{yac};
%\item $\mathrm{\tilde{LB}}_{\textsc{bm}}$ for the standard Box-Pierce test using the statistic $\tilde{Q}_m^\textsc{h}$ and the method presented in \cite{yac}.
%

\begin{table}[hbt]
 \caption{\small{Empirical size (in \%) of the modified and standard versions  of the LB and BP tests in the case of ARMA$(1,1)$. %: model
%(\ref{ARMA01MonteCarlo}), with the parameter $\theta_0=(0.95,-0.6)'$ and $\alpha_1=0.4$.
The nominal asymptotic level of the tests is $\alpha=5\%$.
The number of replications is $N=1000$. }}
\begin{center}
\begin{tabular}{ccc ccc ccc}
\hline\hline \\
Model& Length $n$ & Lag $m$ & $\mathrm{{LB}}_{\textsc{sn}}$&$\mathrm{BP}_{\textsc{sn}}$&$\mathrm{{LB}}_{\textsc{frz}}$&$\mathrm{BP}_{\textsc{frz}}$&$\mathrm{{LB}}_{\textsc{s}}$&$\mathrm{BP}_{\textsc{s}}$
\vspace*{0.2cm}\\\hline
&& $1$&3.3& 3.1& 3.2& 3.2&n.a. &n.a.\\
&& $2$&3.5  & 3.5  & 4.5 &  4.2&n.a. &n.a.\\
IV &$n=500$& $3$&3.0 & 2.7 & 2.9 & 2.9& 23.5 &23.4\\
 &&$6$&0.9 & 0.8&  1.7&  1.4& 13.5& 13.3\\
 &&$12$&1.1& 1.0& 2.7& 2.7& 8.8& 8.7\\
  \cline{2-9}
&& $1$&5.0& 5.0& 5.7& 5.7&n.a. &n.a.\\
&& $2$&5.2 &  5.2  & 5.6  & 5.6&n.a. &n.a.\\
IV &$n=2,000$& $3$&4.1 & 4.1 & 3.8&  3.8& 28.1& 28.0\\
 &&$6$&3.9  &3.9 & 2.5 & 2.5& 14.1& 14.1\\
 &&$12$&1.9 & 1.8 & 2.1&  2.1& 10.5& 10.1\\
  \cline{2-9}
&& $1$&6.1& 6.1& 5.5& 5.5&n.a. &n.a.\\
&& $2$&5.9  & 5.9  & 5.3  & 5.3&n.a. &n.a.\\
IV &$n=10,000$& $3$&5.6 & 5.6 & 5.0 & 5.0& 29.2 &29.2\\
 &&$6$&6.3&  6.3&  4.2&  4.2& 16.8 &16.8\\
 &&$12$&3.5 & 3.5  &3.8 & 3.8& 13.4 &13.4\\

\hline
&& $1$&4.4& 4.4& 5.4& 5.5&n.a. &n.a.\\
&& $2$&6.3   &6.2   &5.8  & 5.8&n.a. &n.a.\\
V &$n=500$& $3$&5.5 & 5.1 & 5.6 & 5.5& 11.6& 11.6\\
 &&$6$&4.6& 4.5& 5.2& 5.1& 7.6& 7.2\\
 &&$12$&4.8& 4.6& 5.1 &4.6 &6.1 &5.7\\
 \cline{2-9}
&& $1$&5.6& 5.6& 4.6& 4.6&n.a. &n.a.\\
&& $2$&5.2  & 5.2  & 5.1  & 5.1&n.a. &n.a.\\
V &$n=2,000$& $3$&4.3 & 4.3 & 4.8&  4.7& 12.6& 12.6\\
 &&$6$&4.3 &4.2& 5.2& 5.1& 7.0& 7.0
\\
 &&$12$&4.2& 4.1& 4.5& 4.5& 5.0& 5.0\\
  \cline{2-9}
&& $1$&6.0& 6.0& 5.4& 5.4&n.a. &n.a.\\
&& $2$&6.4  & 6.4   &6.1  & 6.1 &n.a. &n.a.\\
V &$n=10,000$& $3$&6.2 & 6.2 & 5.6&  5.6& 13.7& 13.7\\
 &&$6$& 5.0& 5.0& 5.4& 5.4& 7.7& 7.7\\
 &&$12$&5.1& 5.2& 4.5& 4.5& 5.4& 5.4\\
\hline\hline \\
\multicolumn{9}{l}{IV: Weak ARMA$(1,1)$ model
(\ref{ARMA01MonteCarlo})-(\ref{PTcarre}).}\\
\multicolumn{9}{l}{V: Weak ARMA$(1,1)$ model
(\ref{ARMA01MonteCarlo})-(\ref{RT}).}\\
%\multicolumn{9}{l}{$\mathrm{\tilde{LB}_{\textsc{sn}}}$ and $\mathrm{BP_{\textsc{sn}}}$ refer to modified test using the statistics $\tilde{Q}_m^{\textsc{sn}}$ and $Q_m^{\textsc{sn}}$.}
%\\
%\multicolumn{9}{l}{$\mathrm{\tilde{LB}_{\textsc{frz}}}$ and $\mathrm{BP_{\textsc{frz}}}$ refer to LB and BP tests using  $\tilde{Q}_m^\textsc{lb}$ and $Q_m^\textsc{bp}$ as in \cite{frz}.}\\
%\multicolumn{9}{l}{$\mathrm{\tilde{LB}_{\textsc{s}}}$ and $\mathrm{BP_{\textsc{s}}}$ refer to  LB and BP tests using the statistics $\tilde{Q}_m^\textsc{lb}$ and $Q_m^\textsc{bp}$.}
%\\
\end{tabular}
\end{center}
\label{tab1ARMAsuite}
\end{table}

\begin{table}[hbt]
 \caption{\small{Empirical size  adjusted power (in \%) of the modified and standard versions
 of the LB and BP tests at the 5\% nominal level  in the case of ARMA$(2,1)$ model.
%The nominal asymptotic level of the tests is $\alpha=5\%$.
The number of replications is $N=1000$. }}
\begin{center}
\begin{tabular}{ccc ccc ccc}
\hline\hline \\
Model& Length $n$ & Lag $m$ & $\mathrm{{LB}}_{\textsc{sn}}$&$\mathrm{BP}_{\textsc{sn}}$&$\mathrm{{LB}}_{\textsc{frz}}$&$\mathrm{BP}_{\textsc{frz}}$&$\mathrm{{LB}}_{\textsc{s}}$&$\mathrm{BP}_{\textsc{s}}$
\vspace*{0.2cm}\\\hline
&& $1$&80.8 & 80.8 &100.0 &100.0&n.a. &n.a.\\
&& $2$&66.3 & 66.5 &100.0 &100.0&n.a. &n.a.\\
I &$n=500$& $3$&64.9 & 65.0& 100.0& 100.0 & 85.9 & 85.9\\
 &&$6$&55.3 & 55.4& 100.0& 100.0 & 78.5 & 78.7\\
 &&$12$&42.3 & 42.7& 100.0& 100.0 & 71.0 & 71.3\\
 \cline{2-9}
&& $1$&99.7 & 99.7 &100.0 &100.0&n.a. &n.a.\\
&& $2$& 97.5 & 97.5 &100.0& 100.0&n.a. &n.a.\\
I &$n=2,000$& $3$&97.2 & 97.2 &100.0 &100.0 &100.0 &100.0\\
 &&$6$&97.6 & 97.6& 100.0 &100.0& 100.0& 100.0\\
 &&$12$&95.2 & 95.3& 100.0 &100.0 &100.0& 100.0\\
  \cline{2-9}
&& $1$&100.0& 100.0 &100.0 &100.0&n.a. &n.a.\\
&& $2$&100.0& 100.0 &100.0& 100.0 &n.a. &n.a.\\
I &$n=10,000$& $3$&100.0& 100.0 &100.0 &100.0& 100.0& 100.0\\
 &&$6$&100.0& 100.0 &100.0 &100.0& 100.0& 100.0\\
 &&$12$&100.0& 100.0 &100.0 &100.0& 100.0& 100.0\\
 %&&$18$&100.0& 100.0 &100.0 &100.0& 100.0& 100.0\\

\hline
&& $1$&69.5 & 69.5 & 100.0& 100.0 &n.a. &n.a.\\
&& $2$&54.4 & 54.4 & 99.9 & 99.9&n.a. &n.a.\\
II &$n=500$& $3$&50.0& 50.2& 99.9& 99.9& 71.3& 71.3\\
 &&$6$&38.9 & 38.9& 100.0 &100.0 & 61.1 & 61.1\\
 &&$12$&14.6 & 14.8 &100.0 &100.0 & 50.5 & 51.0
\\
 \cline{2-9}
&& $1$&98.0 & 98.0&100.0 &100.0&n.a. &n.a.\\
&& $2$& 92.3  &92.3 &100.0 &100.0&n.a. &n.a.\\
II &$n=2,000$& $3$&92.6 & 92.6& 100.0& 100.0& 100.0& 100.0\\
 &&$6$&92.7 & 92.6 &100.0& 100.0 & 99.8 & 99.8\\
 &&$12$&50.6 & 50.8& 100.0& 100.0 & 99.8&  99.8\\
  \cline{2-9}
&& $1$&100.0 &100.0&100.0 &100.0&n.a. &n.a.\\
&& $2$&100.0 &100.0& 100.0& 100.0&n.a. &n.a.\\
II &$n=10,000$& $3$&99.9 & 99.9 & 100.0& 100.0& 100.0& 100.0\\
 &&$6$&100.0 &100.0& 100.0 &100.0 &100.0& 100.0\\
 &&$12$&100.0 &100.0& 100.0& 100.0& 100.0& 100.0\\
  \hline
&& $1$&60.9 & 60.9& 100.0& 100.0&n.a. &n.a.\\
&& $2$&47.8 & 47.8 & 99.7 & 99.7&n.a. &n.a.\\
III &$n=500$& $3$&40.4& 40.5& 99.6& 99.6& 71.1& 71.0\\
 &&$6$&30.4& 30.3& 99.9& 99.9& 68.4& 68.6\\
 &&$12$&20.2& 19.3& 99.8& 99.8& 64.8& 65.1\\
 \cline{2-9}
&& $1$&95.3 & 95.3& 100.0& 100.0&n.a. &n.a.\\
&& $2$&91.1 & 91.1 &100.0 &100.0&n.a. &n.a.\\
III &$n=2,000$& $3$&85.0 & 84.9 & 100.0 &100.0 & 99.9&  99.9
\\
 &&$6$&80.5 & 80.5& 100.0 &100.0  &99.9 & 99.9\\
 &&$12$&73.0 & 74.0& 100.0& 100.0 & 99.8&  99.8\\
  \cline{2-9}
&& $1$&100.0& 100.0 & 100.0 &100.0&n.a. &n.a.\\
&& $2$&100.0& 100.0 &100.0& 100.0&n.a. &n.a.\\
III &$n=10,000$& $3$&99.6 & 99.6& 100.0 &100.0 &100.0 &100.0\\
 &&$6$&99.7 & 99.7& 100.0 &100.0 &100.0& 100.0\\
 &&$12$&100.0& 100.0& 100.0 &100.0 &100.0 &100.0\\
\hline\hline \\
\multicolumn{9}{l}{I: Strong ARMA$(2,1)$ model
(\ref{lutkepohl})-(\ref{bruitARCH}) with $(\alpha_1,\beta_1)=(0,0)$.}\\
\multicolumn{9}{l}{II: Weak ARMA$(2,1)$ model
(\ref{lutkepohl})-(\ref{bruitARCH}) with $(\alpha_1,\beta_1)=(0.1,0.85)$.}\\
\multicolumn{9}{l}{III: Weak ARMA$(2,1)$ model
(\ref{lutkepohl})-(\ref{PT}).}\\
%\multicolumn{9}{l}{$\mathrm{\tilde{LB}_{\textsc{sn}}}$ and $\mathrm{BP_{\textsc{sn}}}$ refer to modified test using the statistics $\tilde{Q}_m^{\textsc{sn}}$ and $Q_m^{\textsc{sn}}$.}
%\\
%\multicolumn{9}{l}{$\mathrm{\tilde{LB}_{\textsc{frz}}}$ and $\mathrm{BP_{\textsc{frz}}}$ refer to LB and BP tests using  $\tilde{Q}_m^\textsc{lb}$ and $Q_m^\textsc{bp}$ as in \cite{frz}.}\\
%\multicolumn{9}{l}{$\mathrm{\tilde{LB}_{\textsc{s}}}$ and $\mathrm{BP_{\textsc{s}}}$ refer to  LB and BP tests using the statistics $\tilde{Q}_m^\textsc{lb}$ and $Q_m^\textsc{bp}$.}
%\\
\end{tabular}
\end{center}
\label{tab2ARMA}
\end{table}

\begin{table}[hbt]
 \caption{\small{Empirical size  adjusted power (in \%) of the modified and standard versions
 of the LB and BP tests at the 5\% nominal level in the case of ARMA$(2,1)$ model.
%The nominal asymptotic level of the tests is $\alpha=5\%$.
The number of replications is $N=1000$. }}
\begin{center}
\begin{tabular}{ccc ccc ccc}
\hline\hline \\
Model& Length $n$ & Lag $m$ & $\mathrm{{LB}}_{\textsc{sn}}$&$\mathrm{BP}_{\textsc{sn}}$&$\mathrm{{LB}}_{\textsc{frz}}$&$\mathrm{BP}_{\textsc{frz}}$&$\mathrm{{LB}}_{\textsc{s}}$&$\mathrm{BP}_{\textsc{s}}$
\vspace*{0.2cm}\\\hline
&& $1$&50.0 &50.0& 99.7 &99.7&n.a. &n.a.\\
&& $2$&34.4 & 34.4 & 99.8 & 99.8&n.a. &n.a.\\
IV &$n=500$& $3$&30.6& 30.3& 99.9& 99.9& 64.8& 64.8\\
 &&$6$&27.9& 28.2& 99.9 &99.9 &63.0& 63.0\\
 &&$12$&17.1 & 17.8& 100.0 &100.0 & 58.1 & 58.1\\
 \cline{2-9}
&& $1$&84.9&  84.9 &100.0 &100.0&n.a. &n.a.\\
&& $2$&75.6 & 75.6 &100.0 &100.0&n.a. &n.a.\\
IV &$n=2,000$& $3$&74.2 & 74.2& 100.0& 100.0 & 99.3&  99.3
\\
 &&$6$&65.3 & 65.4& 100.0& 100.0 & 99.1 & 99.1\\
 &&$12$&63.1 & 63.1& 100.0& 100.0 & 99.2 & 99.2\\
  \cline{2-9}
&& $1$&100.0 &100.0&100.0 &100.0&n.a. &n.a.\\
&& $2$&99.4 & 99.4& 100.0 &100.0 &n.a. &n.a.\\
IV &$n=10,000$& $3$&99.0 & 99.0& 100.0& 100.0& 100.0 &100.0\\
 &&$6$&98.3 & 98.3 &100.0 &100.0 &100.0 &100.0\\
 &&$12$&99.4 & 99.4 &100.0 &100.0 &100.0 &100.0\\
 %&&$18$&100.0& 100.0 &100.0 &100.0& 100.0& 100.0\\

\hline
&& $1$&85.6 & 85.6 &100.0 &100.0&n.a. &n.a.\\
&& $2$&72.1 & 72.1&100.0 &100.0&n.a. &n.a.\\
V &$n=500$& $3$&70.1 & 70.3& 100.0 &100.0  &88.5 & 88.5\\
 &&$6$&62.6 & 62.6 &100.0& 100.0 & 80.3 & 80.5\\
 &&$12$&48.5 & 49.2 &100.0& 100.0 & 71.0&  71.1\\
 \cline{2-9}
&& $1$&100.0 &100.0&100.0 &100.0&n.a. &n.a.\\
&& $2$&99.4 & 99.4&100.0 &100.0&n.a. &n.a.\\
V &$n=2,000$& $3$&99.4 & 99.4 &100.0 &100.0 &100.0 &100.0\\
 &&$6$&98.8 & 98.8& 100.0 &100.0 &100.0& 100.0\\
 &&$12$&99.3 & 99.3&100.0& 100.0 &100.0 &100.0\\
  \cline{2-9}
&& $1$&100.0& 100.0& 100.0& 100.0&n.a. &n.a.\\
&& $2$&100.0& 100.0& 100.0& 100.0&n.a. &n.a.\\
V &$n=10,000$& $3$&100.0& 100.0& 100.0& 100.0& 100.0& 100.0\\
 &&$6$&100.0& 100.0& 100.0& 100.0&100.0 &100.0\\
 &&$12$&100.0& 100.0& 100.0& 100.0&100.0 &100.0\\
\hline\hline \\
\multicolumn{9}{l}{IV: Weak ARMA$(2,1)$ model
(\ref{lutkepohl})-(\ref{PTcarre}).}\\
\multicolumn{9}{l}{V: Weak ARMA$(2,1)$ model
(\ref{lutkepohl})-(\ref{RT}).}\\
%\multicolumn{9}{l}{$\mathrm{\tilde{LB}_{\textsc{sn}}}$ and $\mathrm{BP_{\textsc{sn}}}$ refer to modified test using the statistics $\tilde{Q}_m^{\textsc{sn}}$ and $Q_m^{\textsc{sn}}$.}
%\\
%\multicolumn{9}{l}{$\mathrm{\tilde{LB}_{\textsc{frz}}}$ and $\mathrm{BP_{\textsc{frz}}}$ refer to LB and BP tests using  $\tilde{Q}_m^\textsc{lb}$ and $Q_m^\textsc{bp}$ as in \cite{frz}.}\\
%\multicolumn{9}{l}{$\mathrm{\tilde{LB}_{\textsc{s}}}$ and $\mathrm{BP_{\textsc{s}}}$ refer to  LB and BP tests using the statistics $\tilde{Q}_m^\textsc{lb}$ and $Q_m^\textsc{bp}$.}
%\\
\end{tabular}
\end{center}
\label{tab2ARMAsuite}
\end{table}

\begin{table}[hbt]
 \caption{\small{Empirical size (in \%) of the modified  and standard  versions
 of the LB and BP tests in the case of VARMA$(1,1)$. %: model
%(\ref{ARMA01MonteCarlo}), with the parameter $\theta_0=(0.95,-0.6)'$ and $\alpha_1=0.4$.
The nominal asymptotic level of the tests is $\alpha=5\%$.
The number of replications is $N=1000$. }}
\begin{center}
\begin{tabular}{ccc ccc ccc}
\hline\hline \\
Model& Length $n$ & Lag $m$ & $\mathrm{{{LB}}}_{\textsc{sn}}$&$\mathrm{{{BP}}}_{\textsc{sn}}$&$\mathrm{{{LB}}}_{\textsc{bm}}$&
$\mathrm{{{BP}}}_{\textsc{bm}}$&$\mathrm{{{LB}}}_{\textsc{s}}$&$\mathrm{{{BP}}}_{\textsc{s}}$
\vspace*{0.2cm}\\\hline
&& $1$&4.8& 4.9 &4.2 &4.0&n.a. &n.a.\\
&& $2$&5.2  & 5.2  & 3.4  & 3.3&n.a. &n.a.\\
I &$n=500$& $3$&4.0 & 4.1 & 4.4 & 4.2& 20.6& 19.9
\\
 &&$4$&3.5 & 3.6 & 4.6 & 4.3& 11.6 &11.5\\
 &&$5$&3.5& 3.7& 3.9& 3.8 &8.2 &7.8\\
 %&&$6$&3.3& 2.4& 4.4 &3.9& 6.8& 6.3\\
  \cline{2-9}
&& $1$&4.6& 4.6& 2.9& 2.9&n.a. &n.a.\\
&& $2$&4.8  & 4.8 &  3.2 &  3.1&n.a. &n.a.\\
I &$n=2,000$& $3$&4.3  &4.3 & 3.9 & 3.9 &19.6 &19.4\\
 &&$4$&5.1 & 5.1 & 3.5 & 3.4& 10.5 &10.4\\
 &&$5$&4.8& 4.8 &4.3& 4.3 &8.3 &8.1\\
% &&$6$&4.0 &4.0 &4.8& 4.7 &6.3& 6.2\\
  \cline{2-9}
&& $1$&5.7 &5.7 &3.7& 3.7& n.a. &n.a.\\
&& $2$&5.4  & 5.4  & 4.7  & 4.7&n.a. &n.a.\\
I &$n=10,000$& $3$&5.3 & 5.3 & 4.4 & 4.4& 19.0& 19.0\\
 &&$4$&5.7 & 5.7 & 4.5 & 4.5 &10.6& 10.6\\
 &&$5$&6.1& 6.1 &4.4& 4.3& 8.5& 8.5\\
 %&&$6$&2.4 &2.5& 2.9 &2.9 &4.3& 4.3\\

\hline
&& $1$&3.9& 3.9& 5.3& 5.1& n.a. &n.a.\\
&& $2$&2.9  & 2.9  & 5.5  & 5.2&  n.a. &n.a.\\
II &$n=500$& $3$&2.1 & 2.1 & 2.9 & 2.7& 34.9 &34.0\\
 &&$4$&1.1 & 1.2  &2.8 & 2.7& 24.5& 23.7\\
 &&$5$&0.9 & 0.9 & 2.3 & 2.0 &18.0& 17.7\\
 %&&$6$&0.7 & 0.5 & 3.7 & 3.4 &20.4& 19.7\\
 \cline{2-9}
&& $1$&3.9& 3.9& 6.1 &5.9& n.a. &n.a.\\
&& $2$&5.0  & 5.0  & 7.0  & 6.9 & n.a. &n.a.\\
II &$n=2,000$& $3$&3.3 & 3.3 & 5.4 & 5.3& 43.9 &43.8
\\
 &&$4$&3.1 & 3.1  &5.5 & 5.4& 32.1& 32.0\\
 &&$5$&3.0 & 3.0 & 4.9&  4.8& 26.1& 25.9\\
 %&&$6$&2.1&  2.1&  3.2&  3.2 &26.1 &25.8\\
  \cline{2-9}
&& $1$&4.4& 4.4& 5.3& 5.3 &  n.a. &n.a.\\
&& $2$&4.9  & 4.9  & 6.1  & 6.1 & n.a. &n.a.\\
II &$n=10,000$& $3$&3.9 & 3.9 & 5.6 & 5.6& 50.0& 49.9\\
 &&$4$&4.2 & 4.2 & 5.4 & 5.4& 37.2& 37.1\\
 &&$5$&3.3 & 3.3 & 5.6 & 5.6& 28.4& 28.3\\
 %&&$6$&2.8 & 2.7 & 4.3&  4.3& 32.4 &32.4\\
  \hline
&& $1$&3.4& 3.4& 6.6& 6.5 & n.a. &n.a.\\
&& $2$&1.6  & 1.6  & 4.6 &  4.5& n.a. &n.a.\\
III &$n=500$& $3$&0.3 & 0.3&  2.1 & 2.1& 50.7& 50.1\\
 &&$4$& 0.1 & 0.1 & 1.3 & 1.3& 37.7 &36.9\\
 &&$5$&0.0 & 0.0 & 1.1 & 1.1& 29.7 &28.9\\

 \cline{2-9}
&& $1$&4.8 &4.8 &6.3 &6.3 & n.a. &n.a.\\
&& $2$& 3.7  & 3.7 &  5.4 &  5.4& n.a. &n.a.\\
III &$n=2,000$& $3$&2.7 & 2.7 & 5.0 & 5.0& 56.5& 56.4\\
 &&$4$&2.5&  2.5 & 3.6 & 3.5& 41.8& 41.3\\
 &&$5$&1.6 & 1.6 & 2.4 & 2.4 &34.4 &34.3\\
% &&$6$&2.2 & 2.2 & 3.0 & 3.0 &11.4& 11.2\\
  \cline{2-9}
&& $1$&5.1& 5.1& 6.2& 6.2 & n.a. &n.a.\\
&& $2$&5.2  & 5.2  & 6.0  & 6.0& n.a. &n.a.\\
III &$n=10,000$& $3$&4.0 & 4.0 & 5.8 & 5.8& 58.6& 58.5
\\
 &&$4$&4.3 & 4.3 & 5.6 & 5.6 &44.9& 44.9\\
 &&$5$&4.2 & 4.2 & 5.7 & 5.6& 38.2 &38.2\\
 %&&$6$&2.3 & 2.3 & 3.4 & 3.4 &43.3& 43.3\\
\hline\hline \\
\multicolumn{9}{l}{I: Strong VARMA$(1,1)$ model
(\ref{VARMA11MonteCarlo})-(\ref{bruitfort}).}\\
\multicolumn{9}{l}{II: Weak VARMA$(1,1)$ model
(\ref{VARMA11MonteCarlo})-(\ref{ARCH1}).}\\
\multicolumn{9}{l}{III: Weak VARMA$(1,1)$ model
(\ref{VARMA11MonteCarlo})-(\ref{multiPT}).}\\
%\multicolumn{9}{l}{$\mathrm{\tilde{\mathbf{LB}}_{\textsc{sn}}}$ and  $\mathrm{{\mathbf{BP}}_{\textsc{sn}}}$ refer to modified test using the statistics $\tilde{\mathbf{Q}}_m^{\textsc{sn}}$ and ${\mathbf{Q}}_m^{\textsc{sn}}$.}
%\\
%\multicolumn{9}{l}{$\mathrm{\tilde{\mathbf{LB}}_{\textsc{bm}}}$ and $\mathrm{{\mathbf{BP}}_{\textsc{bm}}}$ refer to  LB and BP tests with $\tilde{\mathbf{Q}}_m^\textsc{h}$ and ${\mathbf{Q}}_m^\textsc{c}$ as in \cite{yac}.}\\
%\multicolumn{9}{l}{$\mathrm{\tilde{\mathbf{LB}}_{\textsc{s}}}$ and $\mathrm{{\mathbf{BP}}_{\textsc{s}}}$ refr to  LB and BP tests using the statistics $\tilde{\mathbf{Q}}_m^\textsc{h}$ and ${\mathbf{Q}}_m^\textsc{c}$.}
%\\
\end{tabular}
\end{center}
\label{tab1VARMA}
\end{table}

\begin{table}[hbt]
 \caption{\small{Empirical size (in \%) of the modified  and standard  versions
 of the LB and BP tests in the case of VARMA$(1,1)$. %: model
%(\ref{ARMA01MonteCarlo}), with the parameter $\theta_0=(0.95,-0.6)'$ and $\alpha_1=0.4$.
The nominal asymptotic level of the tests is $\alpha=5\%$.
The number of replications is $N=1000$. }}
\begin{center}
\begin{tabular}{ccc ccc ccc}
\hline\hline \\
Model& Length $n$ & Lag $m$ & $\mathrm{{{LB}}}_{\textsc{sn}}$&$\mathrm{{{BP}}}_{\textsc{sn}}$&$\mathrm{{{LB}}}_{\textsc{bm}}$&
$\mathrm{{{BP}}}_{\textsc{bm}}$&$\mathrm{{{LB}}}_{\textsc{s}}$&$\mathrm{{{BP}}}_{\textsc{s}}$
\vspace*{0.2cm}\\\hline
&& $1$&2.4& 2.4& 9.5& 9.5&n.a.&n.a.\\
&& $2$&0.7 &  0.8  & 6.5  & 6.5&n.a.&n.a.\\
IV &$n=500$& $3$&0.0 & 0.0 & 5.2 & 5.2& 52.0& 51.5\\
 &&$4$&0.2 & 0.2 & 5.1 & 5.1& 40.5& 40.1\\
 &&$5$&0.2 & 0.2 & 4.5 & 4.5& 37.3& 36.4\\
  \cline{2-9}
&& $1$&3.1& 3.1& 6.0& 6.0&n.a.&n.a.\\
&& $2$&2.5  & 2.5  & 4.8  & 4.8 &n.a.&n.a.\\
IV &$n=2,000$& $3$&2.5 & 2.5&  3.2 & 3.2& 61.8 &61.5\\
 &&$4$&2.0 & 2.0 & 3.0&  3.0& 50.2& 50.0\\
 &&$5$&0.4&  0.4&  1.8 & 1.7& 43.9& 43.8\\
 %&&$6$&6.8 & 6.7&  3.8 & 3.7& 18.5& 18.5\\
  \cline{2-9}
&& $1$&4.7& 4.7& 4.1& 4.1&n.a.&n.a.\\
&& $2$&4.4 &  4.4 &  5.6  & 5.6&n.a.&n.a.\\
IV &$n=10,000$& $3$&3.8 & 3.8 & 5.2 & 5.2& 70.4& 70.4\\
 &&$4$&3.5 & 3.5 & 4.1&  4.1& 57.3& 57.3\\
 &&$5$&3.1 & 3.1 & 4.2&  4.2& 50.3& 50.3\\

\hline
&& $1$&4.0& 4.0 &3.6 &3.6&n.a.&n.a.\\
&& $2$&4.5  & 4.6  & 3.8 &  3.8 &n.a.&n.a.\\
V &$n=500$& $3$&3.9&  4.1&  4.2&  4.2& 15.1& 14.7
\\
 &&$4$&3.6& 3.8& 4.4& 4.0& 9.6& 9.5\\
 &&$5$&3.1& 3.2& 3.7& 3.4& 7.8& 7.0\\
% &&$6$&3.7& 2.9& 4.6& 3.8& 5.6& 5.4\\
 \cline{2-9}
&& $1$&5.5& 5.5& 3.7& 3.7 &n.a.&n.a.\\
&& $2$&5.4   &5.5   &4.4  & 4.4&n.a.&n.a.\\
V &$n=2,000$& $3$&4.5 & 4.6 & 3.8&  3.8& 16.6 &16.4\\
 &&$4$&4.4& 4.4& 4.0& 4.0& 9.5& 9.3\\
 &&$5$&4.3& 4.3& 4.3& 4.3& 7.9& 7.8\\
 %&&$6$&3.4& 3.4& 4.9& 4.9& 5.0& 5.0\\
  \cline{2-9}
&& $1$&4.3& 4.3& 3.6& 3.6&n.a.&n.a.\\
&& $2$&4.2  & 4.2  & 3.2  & 3.2&n.a.&n.a.\\
V &$n=10,000$& $3$&4.7 & 4.7 & 3.9 & 3.9& 15.3& 15.3\\
 &&$4$&4.4& 4.4& 4.1& 4.1 &9.1& 9.1\\
 &&$5$&4.8& 4.8& 3.9& 3.9& 6.8& 6.8\\
 %&&$6$&3.6 &3.6 &5.0 &4.9 &5.0& 5.0\\
\hline\hline \\
 \multicolumn{9}{l}{IV: Weak VARMA$(1,1)$ model
(\ref{VARMA11MonteCarlo})-(\ref{multiPTcarre}).}\\
\multicolumn{9}{l}{V: Weak VARMA$(1,1)$ model
(\ref{VARMA11MonteCarlo})-(\ref{multiRT}).}\\
%\multicolumn{9}{l}{$\mathrm{\tilde{\mathbf{LB}}_{\textsc{sn}}}$ and  $\mathrm{{\mathbf{BP}}_{\textsc{sn}}}$ refer to modified test using the statistics $\tilde{\mathbf{Q}}_m^{\textsc{sn}}$ and ${\mathbf{Q}}_m^{\textsc{sn}}$.}
%\\
%\multicolumn{9}{l}{$\mathrm{\tilde{\mathbf{LB}}_{\textsc{bm}}}$ and $\mathrm{{\mathbf{BP}}_{\textsc{bm}}}$ refer to  LB and BP tests with $\tilde{\mathbf{Q}}_m^\textsc{h}$ and ${\mathbf{Q}}_m^\textsc{c}$ as in \cite{yac}.}\\
%\multicolumn{9}{l}{$\mathrm{\tilde{\mathbf{LB}}_{\textsc{s}}}$ and $\mathrm{{\mathbf{BP}}_{\textsc{s}}}$ refr to  LB and BP tests using the statistics $\tilde{\mathbf{Q}}_m^\textsc{h}$ and ${\mathbf{Q}}_m^\textsc{c}$.}
%\\
\end{tabular}
\end{center}
\label{tab1VARMAsuite}
\end{table}

\begin{table}[hbt]
 \caption{\small{Modified and standard versions
 of portmanteau tests to check the null hypothesis that the CAC40 returns is a white noise. }}
\begin{center}
{\small
%\begin{tabular}{l rrr rrrr r}
\begin{tabular}{c ccc cccc c}
\hline\hline
Lag $m$& &2&3&4 &5&10&18& 24\\
\hline$\hat\rho(m)$&& -0.02829& -0.05308 & 0.04064& -0.05296&0.00860&-0.02182&0.00466
\\
$\mathrm{LB}_{\textsc{sn}}$&&37.1438&65.4815&141.899&183.391&435.224&669.439&880.159
\\
$\mathrm{BP}_{\textsc{sn}}$&&37.1138&65.4176&141.727&183.144&434.646&668.402&878.556
\\
\cline{3-9}  $\mathrm{LB}_{\textsc{frz}}$&&4.88063&19.4172&27.9413&42.4158&52.7845&61.2431&67.2210
\\
$\mathrm{BP}_{\textsc{frz}}$&&4.87699&19.3994&27.9134&42.3685&52.7171&61.1467&67.0991
\\
\cline{3-9}
$\mbox{p}_{\textsc{sn}}^{\textsc{lb}}$&&0.23931&0.27453&0.18218&0.22384&0.43726&0.90622&0.98502
\\
$\mbox{p}_{\textsc{sn}}^{\textsc{bp}}$&&0.23951&0.27493&0.18250&0.22440&0.43814&0.90674&0.98519
\\
\cline{3-9}  $\mbox{p}_{\textsc{frz}}^{\textsc{lb}}$&&0.29758&0.03480&0.03837&0.00911&0.02085&0.17452&0.24341
\\
$\mbox{p}_{\textsc{frz}}^{\textsc{bp}}$&&0.29784&0.03491&0.03850&0.00916&0.02100&0.17544&0.24482
\\
\cline{3-9}  $\mbox{p}_{\textsc{S}}^{\textsc{lb}}$&&0.08713&0.00022&0.00000&0.00000&0.00000&0.00000&0.00000
\\
$\mbox{p}_{\textsc{S}}^{\textsc{bp}}$&& 0.08729&0.00022&0.00000&0.00000&0.00000&0.00000&0.00000
\\
\hline\hline
\end{tabular}
}
\end{center}
\label{cac40BB}
\end{table}

\begin{table}[hbt]
 \caption{\small{Modified and standard versions
 of portmanteau tests to check the null hypothesis that the CAC40 squared returns follow an ARMA$(1,1)$ model. }}
\begin{center}
{\small
\begin{tabular}{c ccc cccc c}
\hline\hline
Lag $m$& &1&2&3 &4&5&6& 7\\
\hline$\hat\rho(m)$&& -0.04724 & 0.010467&  0.01275 &-0.00252&  0.08579& -0.03532 &-0.02772
\\
$\mathrm{LB}_{\textsc{sn}}$&&8.96411&17.2907&21.0192&20.9689&21.0344&21.8014& 25.0933
\\
$\mathrm{BP}_{\textsc{sn}}$&& 8.95890&17.2786&21.0026&20.9526&21.0172&21.7755&25.0477
\\
\cline{3-9}
$\mathrm{LB}_{\textsc{frz}}$&&11.5095&12.0746&12.9140&12.9468&50.9362&57.3777&61.3462
\\
$\mathrm{BP}_{\textsc{frz}}$&&11.5028&12.0674&12.9061&12.9388&50.8767&57.3081&61.2697
\\
\cline{3-9}
$\mbox{p}_{\textsc{sn}}^{\textsc{lb}}$&&0.30050&0.45977&0.66164&0.84375&0.93811&0.97700&0.98987
\\
$\mbox{p}_{\textsc{sn}}^{\textsc{bp}}$&&0.30061&0.45998&0.66183&0.84391&0.93822&0.97709&0.98991
\\
\cline{3-9}  $\mbox{p}_{\textsc{frz}}^{\textsc{lb}}$&&0.11777&0.18293&0.34192&0.48658&0.29325&0.36110&0.38848
\\
$\mbox{p}_{\textsc{frz}}^{\textsc{bp}}$&&0.11789&0.18310&0.34218&0.48687&0.29368&0.36159&0.38899
\\
\cline{3-9}  $\mbox{p}_{\textsc{S}}^{\textsc{lb}}$&&n.a.&n.a.&0.00033&0.00154&0.00000&0.00000&0.00000
\\
$\mbox{p}_{\textsc{S}}^{\textsc{bp}}$&&n.a.&n.a.&0.00033&0.00155&0.00000&0.00000&0.00000
\\
\hline
Lag $m$& &8&9&10 &12&18&20& 24\\\hline
$\hat\rho(m)$&&-0.04099&0.02048&0.04568&0.03584&0.03929&-0.04526&-0.04526
\\
$\mathrm{LB}_{\textsc{sn}}$&&27.7828&27.5084&67.8011&93.7494&167.937&155.600& 364.860
\\
$\mathrm{BP}_{\textsc{sn}}$&&27.7576&27.4791&67.5936&93.1298&169.383&154.868&363.935
\\
\cline{3-9}
$\mathrm{LB}_{\textsc{frz}}$&&70.0230&72.1898&82.9700&89.6424&117.933&128.546&143.244
\\
$\mathrm{BP}_{\textsc{frz}}$&& 69.9297&72.0919&82.8470&89.5013&117.697&128.265&142.890
\\
\cline{3-9}
$\mbox{p}_{\textsc{sn}}^{\textsc{lb}}$&&0.99599&0.99896&0.99275&0.99673&0.99988&0.99999&0.99995
\\
$\mbox{p}_{\textsc{sn}}^{\textsc{bp}}$&&0.99600&0.99897&0.99278&0.99679&0.99987&0.99999&0.99995
\\
\cline{3-9}  $\mbox{p}_{\textsc{frz}}^{\textsc{lb}}$&&0.33766&0.40190&0.33819&0.39101&0.48717&0.45510&0.45367
\\
$\mbox{p}_{\textsc{frz}}^{\textsc{bp}}$&& 0.33821&0.40250&0.33892&0.39191&0.48829&0.45635&0.45503
\\
\cline{3-9}  $\mbox{p}_{\textsc{S}}^{\textsc{lb}}$&&0.00000&0.00000&0.00000&0.00000&0.00000&0.00000&0.00000
\\
$\mbox{p}_{\textsc{s}}^{\textsc{bp}}$&&0.00000&0.00000&0.00000&0.00000&0.00000&0.00000&0.00000
\\
\hline\hline
\end{tabular}
}
\end{center}
\label{cac40ARMA11}
\end{table}

\clearpage
\appendix
\begin{center}
{\bf
Complementary results that are not submitted for publication \\
They will be put into an on-line extended version}
\end{center}
\section{Proofs}\label{proof}
 We recall that the  Skorokhod space $\mathbb D^k [0{,}1]$ is the set of $\mathbb R^k-$valued functions defined on $[0{,}1]$ which are right continuous and have left limits. It is endowed with the Skorokhod topology and the weak convergence on $\mathbb D^k [0{,}1]$ is mentioned by $\xrightarrow[]{\mathbb{D}^k}$. We finally denote by $\lfloor a \rfloor$ the integer part of the real $a$.

In order to make our presentation more readable, we  restrict ourselves to the one dimensional case that requires less technical notation (hence $k_0=p+q$ in the following).
The proofs of Theorems~\ref{sn1VARMA} and \ref{sn2VARMA} are the multivariate adaptation of the proofs of Theorems~\ref{sn1} and \ref{sn2} that are detailed below.

First, we shall need some technical results which are essentially contained in \cite{frz,fz98,fz00}. They are necessary to understand the proofs, but are not essential to give the main ideas of the self-normalization approach. This is the reason why these facts are presented here.
\subsection{Reminder on technical issues on quasi likelihood method for ARMA models}\label{reminder}
We recall that, given a realization $X_1,...,X_n$ of length $n$, the noise $e_t(\vartheta)$ (see Equation \eqref{e(vartheta)}) is approximated by $\tilde e(\vartheta)$ which is defined in \eqref{etilde}.

The starting point in the asymptotic analysis is the property that $e_t(\vartheta)-\tilde e_t(\vartheta)$ converges uniformly to $0$ (almost-surely) as $t$ goes to infinity. Similar properties also hold for the derivatives with respect to $\vartheta$ of $e_t(\vartheta)-\tilde e_t(\vartheta)$. We sum up the properties that we shall need in the sequel. We refer to the appendix of \cite{fz00} (see also \cite{fz98}) for a more detailed treatment.

For any $\vartheta\in\Theta\subset \mathbb R^{k_0}$ and any $(l,m)\in\{ 1,...,k_0\}^2$, there exists absolutely summable and deterministic sequences  $(c_i(\vartheta))_{i\ge 0}$,  $(c_{i,l}(\vartheta))_{i\ge 1}$  and $(c_{i,l,m}(\vartheta))_{i\ge 1}$ such that, almost surely,
\begin{align}
\label{c1}
e_t(\vartheta)& =\sum_{i=0}^{\infty}c_i(\vartheta)X_{t-i}\ ,\  \frac{\partial e_t(\vartheta)}{\partial \vartheta_l} =\sum_{i=1}^{\infty}c_{i,l}(\vartheta)X_{t-i} \  \text{and}\   \frac{\partial^2 e_t(\vartheta)}{\partial \vartheta_l\partial\vartheta_m} =\sum_{i=2}^{\infty}c_{i,l,m}(\vartheta)X_{t-i} \\
\label{c2}
\tilde e_t(\vartheta)& =\sum_{i=0}^{t-1}c_i(\vartheta)X_{t-i}\ ,\  \frac{\partial \tilde e_t(\vartheta)}{\partial \vartheta_l} =\sum_{i=1}^{t-1}c_{i,l}(\vartheta)X_{t-i} \ \text{and}\   \frac{\partial^2 \tilde e_t(\vartheta)}{\partial \vartheta_l\partial\vartheta_m} =\sum_{i=2}^{t-1}c_{i,l,m}(\vartheta)X_{t-i}.
\end{align}
We strength the fact that $c_0(\vartheta)=1$ in the above identities. A useful property of the above three sequences that they are asymptotically exponentially small. Indeed there exists $\rho\in ]0{,}1[$ and a positive constant $K$ such that, for all $i\ge 1$,
\begin{align*}%\label{rho}
\sup_{\vartheta\in \Theta} \Big ( |c_i(\vartheta)| +|c_{i,l}(\vartheta)| + |c_{i,l,m}(\vartheta)|\Big ) & \le K\, \rho^i \ .
\end{align*}
From \eqref{FZARMA}, this implies that there exists some other absolutely summable and deterministic sequences  $(d_i(\vartheta))_{i\ge 0}$,  $(d_{i,l}(\vartheta))_{i\ge 1}$  and $(d_{i,l,m}(\vartheta))_{i\ge 1}$ such that, almost surely,
\begin{align}
\label{ce1}
e_t(\vartheta)& =\sum_{i=0}^{\infty}d_i(\vartheta)e_{t-i}\ ,\  \frac{\partial e_t(\vartheta)}{\partial \vartheta_l} =\sum_{i=1}^{\infty}d_{i,l}(\vartheta)e_{t-i} \  \text{and}\   \frac{\partial^2e_t(\vartheta)}{\partial \vartheta_l\partial\vartheta_m} =\sum_{i=2}^{\infty}d_{i,l,m}(\vartheta)e_{t-i} \\
%\label{ce2}
 \tilde e_t(\vartheta)& =\sum_{i=0}^{t-1}d_i(\vartheta)\tilde e_{t-i}\ ,\  \frac{\partial \tilde e_t(\vartheta)}{\partial \vartheta_l} =\sum_{i=1}^{t-1}d_{i,l}(\vartheta)\tilde e_{t-i} \ \text{and}\   \frac{\partial^2 \tilde e_t(\vartheta)}{\partial \vartheta_l\partial\vartheta_m} =\sum_{i=2}^{t-1}d_{i,l,m}(\vartheta)\tilde e_{t-i}.\nonumber
\end{align}
We have $d_0(\vartheta)=1$ and the three above sequences also satisfy
\begin{align}\label{rho2}
\sup_{\vartheta\in \Theta} \Big ( |d_i(\vartheta)| +|d_{i,l}(\vartheta)| + |d_{i,l,m}(\vartheta)|\Big ) & \le K\, \rho^i \ .
\end{align}
Finally, from the above estimates, we are able to deduce that for any $(l,m)\in\{ 1,...,k_0\}^2$
\begin{align}
\label{app}
\sup_{\vartheta\in\Theta} \left | e_t(\vartheta)-\tilde e_t(\vartheta) \right | & \xrightarrow[t\to\infty]{\text{a.s.}} 0 \ , \\
\label{app2}
\rho^t \sup_{\vartheta\in\Theta} \left | e_t(\vartheta) \right |&  \xrightarrow[t\to\infty]{\text{a.s.}} 0 \ .
\end{align}
Analogous estimates to \eqref{app} and \eqref{app2} are satisfied for first and second order derivatives of $e_t$ and $\tilde e_t$.

\subsection{Proof of Theorem \ref{sn1}}
The proof is divided in several steps.   % We follow the ideas presented in subsection \ref{sn}.
\subsubsection{Taylor's expansion of $\hat\Gamma_m$}
The aim of this step is to prove that
\begin{align}\label{tayl}
\sqrt{n}\ \hat\Gamma_m & = \frac{1}{\sqrt{n}}\sum_{t=1}^n\ \Lambda U_t + \mathrm{o}_{\mathbb P}(1).
\end{align}
Let $\sigma_{e0}^2$ the common variance of the univariate  noise process.
In this case, the Gaussian $\log$-quasi-likelihood can be written as  $$\tilde{\mathrm{\ell}}_n(\vartheta,\sigma_e^2)=\frac{-2}{n}\log \tilde{\mathrm{L}}_n(\vartheta,\sigma_e^2)=\log(2\pi)+\log\sigma_{e}^2
+\frac{2}{n}\sum_{t=1}^n\frac{\tilde{e}_t^2(\vartheta)}{\sigma_e^2}.$$
The least squares estimator (LSE in short) of $\vartheta$ is the value of that minimizes the last quantity as a function of $\vartheta$. Thus the LSE $\hat{\vartheta}_n$ satisfies almost surely
\begin{align}
Q_n(\hat{\vartheta}_n) = \min_{\vartheta\in \Theta} Q_n(\vartheta), \text{\ with\ }
Q_n(\vartheta)=\frac{1}{n}\sum_{t=1}^n \tilde{e}_t^2(\vartheta). \label{qn}
\end{align}
We denote $O_n(\vartheta)=\frac{1}{n}\sum_{t=1}^n e_t^2(\vartheta)$. By \eqref{app} and \eqref{app2}, the sequences $\sqrt{n}\frac{\partial}{\partial \vartheta} Q_n(\vartheta_0) $ and $\sqrt{n}\frac{\partial}{\partial \vartheta} O_n(\vartheta_0) $ have the same asymptotic distribution. More precisely we have
\begin{align}\label{conv_partial}
\sqrt{n}  \left (\frac{\partial}{\partial\vartheta}Q_n(\vartheta_0)-\frac{\partial}{\partial\vartheta}O_n(\vartheta_0)\right)  & = \mathrm{o}_\mathbb P (1)\ .
\end{align}
Then we use the Taylor expansion of the derivative of $Q_n$ which is defined in
\eqref{qn}. We shall need that   $\partial Q_n(\hat\vartheta_n)/\partial \vartheta=0$, which is true because $\hat\vartheta_n$ minimizes the function $\vartheta\mapsto Q_n(\vartheta)$.   Following the proof
of Theorem 2 in \cite{fz98}, we obtain that
\begin{align*}
\nonumber 0&=\sqrt{n}\,\frac{\partial
Q_n(\hat\vartheta_n)}{\partial\vartheta} =\sqrt{n}\,\frac{\partial
Q_n(\vartheta_0)}{\partial \vartheta}+\frac{\partial^2
Q_n(\vartheta^\sharp_{n,i,j})}{\partial \vartheta_i\partial
\vartheta_j}\sqrt{n}\left(\hat{\vartheta}_n-\vartheta_0\right),
\end{align*}
where the $\vartheta^\sharp_{n,i,j}$'s are between $\hat\vartheta_n$ and $\vartheta_0$, and consequently
\begin{align}
\label{taylorintheo2}0 &=\sqrt{n}\, \frac{\partial
O_n(\vartheta_0)}{\partial \vartheta}+\frac{\partial^2
O_n(\vartheta_0)}{\partial \vartheta\partial
\vartheta'}\sqrt{n}\left(\hat{\vartheta}_n-\vartheta_0\right)+\mathrm{o}_\mathbb P(1).
\end{align}
With $Y_t =-2e_t(\vartheta_0)\frac{\partial e_t(\vartheta_0) }{\partial \vartheta}$, we deduce from (\ref{taylorintheo2}) that
\begin{eqnarray*}
\hat{\vartheta}_n-\vartheta_0=-J^{-1}\frac{\partial O_n(\vartheta_0)}{\partial
\vartheta}+\mathrm{o}_\mathbb P(1)=J ^{-1}\frac{1}{n}\sum_{t=1}^n Y_t+\mathrm{o}_\mathbb P(1).
\end{eqnarray*}
Thanks to the results recalled in Section \ref{reminder}, we may use the arguments of Lemma A3 in \cite{fz00}. We have for $h\in\{1,\dots,m\}$
\begin{align}
\label{dl2}
\hat\Gamma_e(h)& = \Gamma_e(h) + \phi_h \, (\hat\vartheta_n-\vartheta_0)  +\mathrm{o}_\mathbb{P}(1),
\end{align}
where $ \phi_h:=\mathbb E \left ( \frac{\partial e_t(\vartheta_0)}{\partial \vartheta'}\, e_{t-h}(\vartheta_0)\right )\in\mathbb R^{p+q}$.
Thus we may rewrite \eqref{dl2} as
\begin{align*}%\label{dl3}
\hat\Gamma_e(h)%& = \Gamma_e(h) -\frac{1}n  \phi_h \, J^{-1} \sum_{t=1}^n Y_t  +\mathrm{o}_\mathbb{P}(1) \nonumber \\
& = \frac{1}{n}\sum_{t=h+1}^{n}e_t\, e_{t-h} -(\phi_h \, J^{-1})\frac{1}n   \sum_{t=1}^n Y_t  +\mathrm{o}_\mathbb{P}(1).
\end{align*}
Now, we come back to the vector $\hat\Gamma_m=(\hat\Gamma_e(1),\dots,\hat\Gamma_e(m))'$. We remark that the matrix $\Phi_m$ defined by \eqref{Gamma} is the  matrix in $\mathbb R^{m\times(p+q)}$ whose line $h$ is $\phi_h$. So we have
\begin{align}
\label{dl4}
\sqrt{n}\ \hat\Gamma_m & = \frac{1}{\sqrt{n}}\sum_{t=1}^n\ \Lambda \left (   Y_t'{,} e_te_{t-1} {,}\dots{,} e_te_{t-m}
 \right )'+ \mathrm{o}_{\mathbb P}(1) .
\end{align}
Therefore the Taylor expansion \eqref{tayl} of $\hat\Gamma_m$ is proved. This ends our first step.

Now, it is clear that the asymptotic behaviour of $\hat\Gamma_m$ is related to the limit distribution of $U_t=\left ( Y_t' {,} e_te_{t-1} {,}\dots{,} e_te_{t-m} \right )'$. The next step deals with the asymptotic distribution of $\Lambda U_t$.

\subsubsection{Functional central limit theorem for $(\Lambda U_t)_{t\ge 1}$}\label{hh}
Our purpose is to prove that there exists a lower triangular matrix $\Psi$, with nonnegative diagonal entries, such that
\begin{equation}
\label{fclt}
\frac{1}{\sqrt{n}}\sum_{j=1}^{\lfloor nr\rfloor }\Lambda U_j
\xrightarrow[n\to\infty]{\mathbb D^m}
\Psi B_{m}(r)
\end{equation}
where $(B_m(r))_{r\ge 0}$ is a $m$-dimensional standard Brownian motion.

By \eqref{ce1} one rewrites $U_t$ as
\begin{align}\label{ut}
U_t & =  \left ( -2\sum_{i=1}^{\infty} d_{i,1}(\vartheta_0)e_t e_{t-i}\ , \ \cdots \ ,\ -2\sum_{i=1}^{\infty} d_{i,k_0}(\vartheta_0)e_t e_{t-i}\  {,}\ e_te_{t-1} \ {,}\ \dots{,} \ e_te_{t-m}
 \right )'
\end{align}
and thus it has zero expectation with values in $\mathbb R^{k_0+m}$. In order to apply the function central limit theorem for strongly mixing process,
we need to identify the asymptotic covariance matrix in the classical central limit theorem for the sequence $(U_t)_{t\ge 1}$.
It is proved in \cite{fz98} that
\begin{align}\label{convvv}
\frac{1}{\sqrt{n}}\sum_{t=1}^{n}U_t \xrightarrow[n\to\infty]{\mathrm{d}}\mathcal{N}(0,\Xi)
\end{align}
where
\begin{equation}
\label{spectraldensityatzero}
\Xi:=2\pi f_U(0)=\sum_{h=-\infty}^{+\infty}\cov(U_t,U_{t-h})=\sum_{h=-\infty}^{+\infty}\e(U_tU'_{t-h}),
\end{equation}
where $f_U(0)$ is the spectral density of the
stationary process $(U_t)_{t\in\mathbb Z}$ evaluated at frequency 0 (see
for example \cite{broc-d}). The main issue is the existence of the sum of the right-hand side of
(\ref{spectraldensityatzero}). For that sake, one has to introduce for any integer $k$, the random variables
\begin{align*}%\label{Uk}
U_t^k =\left ( -2\sum_{i=1}^{k} d_{i,1}(\vartheta_0)e_t e_{t-i}\ , \ \cdots \ ,\ -2\sum_{i=1}^{k} d_{i,k_0}(\vartheta_0)e_t e_{t-i}\  {,}\ e_te_{t-1} \ {,}\ \dots{,} \ e_te_{t-m}
 \right )' .
\end{align*}
Since $U^k$ depends on a finite number of values of the noise-process $e$, it also satisfies a mixing property of the form \eqref{mixing}. Based on the Davydov inequality  (see \cite{davy}), the arguments developed in the Lemma A.1 in \cite{frz} (see also \cite{fz98}) imply that
\begin{align}\label{convvvk}
\frac{1}{\sqrt{n}}\sum_{t=1}^{n}U^k_t \xrightarrow[n\to\infty]{\mathrm{d}}\mathcal{N}(0,\Xi_k)
\end{align}
where
\begin{equation*}
%\label{spectraldensityatzero}
\Xi_k:=2\pi f_{U^k}
(0)=\sum_{h=-\infty}^{+\infty}\cov(U^k_t,U^k_{t-h})=
\sum_{h=-\infty}^{+\infty}\e (U_t^k {{U}^{k}_{t-h}}' ) .
\end{equation*}
Using this truncation procedure and the Davydov inequality (see \cite{davy}), the arguments developed in the Lemma A.1 in \cite{frz} (see also \cite{fz98}) imply that \eqref{convvv} holds. Moreover we have that $\lim_{k\to\infty}\Xi_k =\Xi$.
%Moreover, the functional central limit theorem for $U^k$ is now a by-product of the above truncation argument and the theorem given by Herrndorf in  \cite{herr}. We have
%for any $r\in(0, 1)$,
%\begin{equation}
%\label{fclt^k}
%\frac{1}{\sqrt{n}}\sum_{j=1}^{\lfloor nr\rfloor } U_j^k
%\xrightarrow[n\to\infty]{\mathbb D}
%B_{k_0+m}(r)
%\end{equation}
%where $(B_{k_0+m}(r))_{r\ge 0}$ is a $k_0+m$-dimensional Brownian motion.

Since the matrix $\Xi$ is positive definite, it can be factorized as $\Xi=\Upsilon\Upsilon'$ where the $(k_0+m)\times (k_0+m)$ lower triangular matrix $\Upsilon$ has nonnegative diagonal entries. Therefore, we have
$$\frac{1}{\sqrt{n}}\sum_{t=1}^{n}\Lambda U_t \xrightarrow[n\to\infty]{\mathrm{d}}\mathcal{N}(0,\Lambda\Xi\Lambda'),$$
and the new variance matrix can also been factorized as $\Lambda\Xi\Lambda'= (\Lambda\Upsilon)(\Lambda\Upsilon)':=\Psi\Psi'$.  Thus,
${n}^{-1/2}\sum_{t=1}^{n}\Psi^{-1}\Lambda U_t \xrightarrow[n\to\infty]{\mathrm{d}} \mathcal{N}\left(0,I_{m}\right)$ where $I_{m}$ is the identity matrix of order $m$.
The above arguments also apply to matrix $\Xi_k$ with some matrix $\Psi_k$ which is defined analogously as $\Psi$. Consequently,
$$\frac{1}{\sqrt{n}}\sum_{t=1}^{n}\Lambda U_t^k \xrightarrow[n\to\infty]{\mathrm{d}}\mathcal{N}(0,\Lambda\Xi_k\Lambda'), $$
and we also have ${n}^{-1/2}\sum_{t=1}^{n}\Psi_k^{-1}\Lambda U^k_t \xrightarrow[n\to\infty]{\mathrm{d}} \mathcal{N}\left(0,I_{m}\right)$

Now we are able to apply the functional central limit theorem (see \cite{herr}). We have for any $r\in (0{,}1)$,
\begin{align*}
%\label{fclt^kk}
\frac{1}{\sqrt{n}}\sum_{j=1}^{\lfloor nr\rfloor }\Psi_k^{-1}\Lambda U_j^k
\xrightarrow[n\to\infty]{\mathbb D^m}
B_{m}(r) .
\end{align*}
We write $$\Psi^{-1}\Lambda U_j^k = \Big (\Psi^{-1}-\Psi_k^{-1}\Big)\Lambda U_j^k  + \Lambda \Psi_k^{-1}\Lambda U_j^k$$
and we obtain that
\begin{align*}
%\label{fclt^kk}
\frac{1}{\sqrt{n}}\sum_{j=1}^{\lfloor nr\rfloor }\Psi^{-1}\Lambda U_j^k
\xrightarrow[n\to\infty]{\mathbb D^m}
B_{m}(r) .
\end{align*}
In order to conclude that \eqref{fclt} is true, it remains to observe that, uniformly with respect to $n$,
\begin{align}
\label{ll}
Z^k_n(r):=\frac{1}{\sqrt{n}}\sum_{j=1}^{\lfloor nr\rfloor }\Psi^{-1}\Lambda V_j^k
\xrightarrow[k\to\infty]{\mathbb D^m}
0 ,
\end{align}
where
$$ V_t^k = \left ( -2\sum_{i=k+1}^{\infty} d_{i,1}(\vartheta_0)e_t e_{t-i}\ , \ \cdots \ ,\ -2\sum_{i=k+1}^{\infty} d_{i,k_0}(\vartheta_0)e_t e_{t-i}\  {,}\ e_te_{t-1} \ {,}\ \dots{,} \ e_te_{t-m}
 \right )' \ .$$
By Lemma 4 in \cite{fz98},
$$
\sup_{n} \mathrm{Var} \left ( \frac{1}{\sqrt{n}} \sum_{j=1}^n V_j^k \right ) \xrightarrow[k\to\infty]{} 0
$$
and since $\lfloor nr\rfloor\le n$,
$$\sup_{0\le r\le 1}\sup_n \left (  | Z_n^k(r) | \right ) \xrightarrow[k\to\infty]{} 0 . $$
Thus \eqref{ll} is true and the proof of \eqref{fclt} is achieved.
\subsubsection{Limit theorem}\label{justif}
We prove Theorem \ref{sn1}. We follow the arguments developed in Sections 2 and 3 in \cite{lobato}. The main difference is that we shall work with the  sequence $(\Lambda U_t)_{t\ge 1}$ instead of the sequence $\big ( ( e_te_{t-1},\dots,e_te_{t-m})'\big )_{t\ge 1}$.
The previous step ensures us that  Assumption 1 in \cite{lobato} is satisfied for the sequence $(\Lambda U_t)_{t\ge 1}$.
Since $C_{m}=(1/n^2)\sum_{t=1}^{n}S_{t}S'_{t} $, the continuous mapping theorem on the Skorokhod space implies that
\begin{align}
\label{conv}
C_m   \xrightarrow[n\to\infty]{\mathrm{d}} \Psi{V}_{m}\Psi',
\end{align}
where the random variable $V_m$ is defined in \eqref{vm}.
We assume for the moment that the matrix $C_m$ is invertible (this will be stated and proved in Lemma \ref{inversible} in Subsection \ref{cmmm} at the end of this appendix).

Since by \eqref{dl4}, $\sqrt{n} \hat\Gamma_m = n^{-1/2}\sum_{t=1}^n\Lambda U_t+\mathrm{o}_\mathbb P(1)$, we use (\ref{fclt}) and (\ref{conv}) in order to obtain
\begin{align*}
n\hat\Gamma_m'C_m^{-1}\hat\Gamma_m & = \frac{1}{n}\sum_{t=1}^{n}\left((\Lambda U_t)' C_{m}^{-1}(\Lambda U_t)\right) \\
 &\xrightarrow[n\to\infty]{\mathrm{d}} \left(\Psi{B}_{m}(1)\right)'\left(\Psi{V}_{m}\Psi'\right)^{-1}\left(\Psi{B}_{m}(1)\right)
={B}'_{m}(1){V}_{m}^{-1}{B}_{m}(1),
\end{align*}
and we recognize the random variable $\mathcal U_m$ defined in \eqref{um}. Consequently we have proved \eqref{conv1}. The property \eqref{conv2} is straightforward since $\hat\rho(h)=\hat\Gamma_e(h)/\sigma_{e0}^2$ for $h=1,\dots,m$. The proof of Theorem \ref{sn1} is then complete.
\subsection{Proof of Theorem \ref{sn2}}
We write $\hat C_m=C_m+ \Upsilon_n$ where $ \Upsilon_n  ={n}^{-2} \sum_{t=1}^n \big (S_tS_t' - \hat S_t\hat S_t' \big)$.
There are three kinds of entries in the matrix $\Upsilon_n$. The first one is a sum composed of
$$\upsilon_t^{k,k'} = e_t^2(\vartheta_0)e_{t-k}(\vartheta_0)e_{t-k'}(\vartheta_0)  - \tilde{e}_t^2(\hat\vartheta_n)\tilde{e}_{t-k}(\hat\vartheta_n)\tilde{e}_{t-k'}(\hat\vartheta_n)  $$
for $(k,k')\in\{1,\dots,m\}^2$.
By \eqref{rho2} and the consistency of $\hat\vartheta_n$, we have $\upsilon_t^{k,k'} = \mathrm{o}_{\mathbb P}(1)$ almost surely.
The two last  kinds of entries of $\Upsilon_n$ come from the following quantities for $i,j\in\{1,...,k_0\}$ and $k\in\{1,...,m\}$
\begin{align*}
\tilde\upsilon_t^{k,i} & = e_t^2(\vartheta_0)e_{t-k}(\vartheta_0) \frac{\partial e_{t}(\vartheta_0)}{\partial \vartheta_i}-  \tilde{e}_t^2(\hat\vartheta_n)\tilde{e}_{t-k}(\hat\vartheta_n)\frac{\partial \tilde{e}_{t}(\hat\vartheta_n)}{\partial \vartheta_i},
%\Bigg |_{\theta=\hat\theta_n}
\\
\bar\upsilon^{i,j}_t & = e^2_{t}(\vartheta_0)\frac{\partial e_{t}(\vartheta_0)}{\partial \vartheta_i} \frac{\partial e_{t}(\vartheta_0)}{\partial \vartheta_j}-  \tilde{e}^2_{t}(\hat\vartheta_n)\frac{\partial \tilde{e}_{t}(\hat\vartheta_n)}{\partial \vartheta_i}
\frac{\partial \tilde{e}_{t}(\hat\vartheta_n)}{\partial \vartheta_j}
\end{align*}
and they also satisfy $\tilde\upsilon_t^{k,i} + \bar{\upsilon}_t^{i,j}= \mathrm{o}_{\mathbb P}(1)$ almost surely.
Consequently, $\Upsilon_n= \mathrm{o}_{\mathbb P}(1)$ almost surely as $n$ goes to infinity.
Thus one may find a matrix $\Upsilon^\ast_n$, that tends to the null matrix almost surely, such  that
\begin{align*}
n\, \hat\Gamma_m'\hat C_m^{-1}\hat\Gamma_m &  = n\, \hat\Gamma_m' (C_m + \Upsilon_n)^{-1}\hat\Gamma_m   = n\, \hat\Gamma_m' C_m^{-1}\hat\Gamma_m + n\, \hat\Gamma_m'\Upsilon_n^\ast \hat\Gamma_m \ .
\end{align*}
Thanks to the arguments developed in the proof of Theorem \ref{sn1}, $n \hat\Gamma_m'\hat\Gamma_m $ converges in distribution. So $n \hat\Gamma_m'\Upsilon_n^\ast \hat\Gamma_m$ tends to zero in distribution, hence in probability.  Then $n \hat\Gamma_m'\hat C_m^{-1}\hat\Gamma_m $ and  $n \hat\Gamma_m' C_m^{-1}\hat\Gamma_m $ have the same limit in distribution and the result is proved.
\subsection{Invertibility of the normalization matrix}\label{cmmm}
We prove in this short subsection why the matrix $C_m$ is invertible.
\begin{lemme}\label{inversible}
Under the assumption of Theorem \ref{sn1}, the matrix $C_m$ is almost surely non singular.
 \end{lemme}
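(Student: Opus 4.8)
The plan is to argue by contradiction, exploiting that $C_m$ is positive semidefinite, the linear dependence of $\Lambda U_t$ on the noise, and the local absolute continuity of the noise law supplied by Assumption~(A4).

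First I would note that $C_m=n^{-2}\sum_{t=1}^nS_tS_t'$ is positive semidefinite, so it is singular if and only if there is a nonzero (possibly random) $\lambda\in\mathbb R^m$ with $\lambda'C_m\lambda=n^{-2}\sum_{t=1}^n(\lambda'S_t)^2=0$, i.e.\ $\lambda'S_t=0$ for every $t=1,\dots,n$. Since $S_t-S_{t-1}=\Lambda U_t-\Gamma_m$ with $S_0:=0$, and since $\mathrm{span}\{S_1,\dots,S_n\}=\mathrm{span}\{S_1,S_2-S_1,\dots,S_n-S_{n-1}\}=\mathrm{span}\{\Lambda U_t-\Gamma_m:t=1,\dots,n\}$, the matrix $C_m$ is non-singular precisely when the $n$ vectors $\Lambda U_t-\Gamma_m$, $t=1,\dots,n$, span $\mathbb R^m$. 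Thus it suffices to prove that, almost surely (and for $n\ge m$), these vectors span $\mathbb R^m$.

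Next I would use the explicit form of $U_t$. By \eqref{ut} and $\Lambda=(\Phi_mJ^{-1}\mid I_m)$ one has $\Lambda U_t=e_t\,\zeta_t$, where $\zeta_t\in\mathbb R^m$ is a fixed deterministic linear functional of the strict past $(e_{t-i})_{i\ge1}$ whose lag-$h$ coefficient is $\mathbf e_h-2\Phi_mJ^{-1}d_{h,\cdot}(\vartheta_0)$ for $1\le h\le m$ and $-2\Phi_mJ^{-1}d_{h,\cdot}(\vartheta_0)$ for $h>m$, with $\mathbf e_h$ the $h$-th canonical basis vector and the coefficients exponentially decaying by \eqref{rho2}. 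Moreover $\Gamma_m$ depends on $e_n$ only through the boundary term $n^{-1}e_n(e_{n-1},\dots,e_{n-m})'$. Hence, after freezing every noise value except $e_n$, each $S_t$ ($t\le n$) becomes an affine function of $e_n$, so $\det C_m$ is a polynomial $P$ in $e_n$ of degree at most $2m$ whose coefficients are Borel functions of $(e_s)_{s\ne n}$. If $P\not\equiv 0$, then conditionally on $(e_s)_{s\ne n}$ the event $\{\det C_m=0\}$ is contained in $\{e_n\in F\}$ for a finite set $F$; since the conditional law of $e_n$ is atomless (a consequence of (A4) together with the stationarity of $e$), this conditional probability is $0$, and integrating out gives $\mathbb P(\det C_m=0)=0$.

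The delicate point is exactly showing that $P$ is, almost surely, a non-trivial polynomial, i.e.\ that the partial-sum vectors $S_t$ genuinely move with $e_n$ in all $m$ directions. Here one uses that $\Lambda$ has full row rank $m$ (it contains the block $I_m$) and that the pair $\{\zeta_n,(e_{n-1},\dots,e_{n-m})'\}$ is non-degenerate on a set of past configurations of positive probability: by the positive density of the law of $e_t$ on a neighbourhood of $0$ and strict stationarity one can exhibit such configurations and then compare the highest power of $e_n$ arising from $\Lambda U_n=e_n\zeta_n$ in $S_n$ against the (lower-order in $e_n$) contributions of the remaining $S_t$, to see the leading coefficient of $P$ is nonzero. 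I expect this non-degeneracy verification, and the extraction of the conditional atomlessness of $e_n$ from (A4), to be the main obstacles. (If one only needs the weaker statement actually used in the proof of Theorem~\ref{sn1}, namely $\mathbb P(C_m\text{ non-singular})\to1$, it suffices to combine the convergence \eqref{conv}, $C_m\xrightarrow{\mathrm d}\Psi V_m\Psi'$, with the classical almost-sure positive definiteness of the Brownian-bridge matrix $V_m$ and the invertibility of $\Psi$.)
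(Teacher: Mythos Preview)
Your opening reduction---$C_m$ singular iff some nonzero $\lambda$ annihilates every $S_t$, equivalently every increment $\Lambda U_t-\Gamma_m$---matches the paper, and you correctly observe that $\Lambda U_t=e_t\,\zeta_t$ with $\zeta_t$ measurable in the strict past. From there, however, you diverge to a polynomial-in-$e_n$ argument that has two genuine gaps.

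First, the claim that the conditional law of $e_n$ given $(e_s)_{s\ne n}$ is atomless does \emph{not} follow from (A4) and stationarity: (A4) only provides a \emph{marginal} density near the origin, and stationarity carries no information about conditional laws. Without conditional atomlessness, a nontrivial polynomial equation $P(e_n)=0$ need not be a null event. Second, your leading-coefficient sketch for $P\not\equiv 0$ fails when $m\ge 3$: writing $S_t=a_t+b_t e_n$, the coefficient of $e_n^{2m}$ in $\det C_m$ is $\det\bigl(n^{-2}\sum_t b_t b_t'\bigr)$, but for every $t<n$ the vector $b_t=-t\,\partial\Gamma_m/\partial e_n$ is proportional to the single vector $(e_{n-1},\dots,e_{n-m})'$, so $\sum_t b_t b_t'$ has rank at most $2$ and its determinant vanishes.

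The paper's route is purely algebraic and uses only the marginal consequence $\mathbb{P}(e_t=0)=0$ of (A4). From $\lambda'(\Lambda U_t-\Gamma_m)=0$ for all $t$ one gets $e_t\,(\lambda'\zeta_t)=\lambda'\Gamma_m$; the right-hand side is argued to vanish (via the ergodic theorem applied to the sample autocovariances $\Gamma_e(h)$), and dividing by $e_t\ne 0$ yields $\lambda'\zeta_t=\sum_{k\ge 1}\alpha_k e_{t-k}=0$, where the $\alpha_k$ are explicit linear combinations of the $\lambda_i$ with coefficients built from the $d_{k,l}(\vartheta_0)$ and the entries of $\Phi_mJ^{-1}$. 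Since the innovation variance is positive, this forces every $\alpha_k=0$, and reading off $\alpha_1,\dots,\alpha_m$ then gives $\lambda=0$. The trick you missed is to factor out $e_t$ and reduce to a linear relation among past noise values, rather than to condition on them. Your parenthetical remark---that $\mathbb{P}(C_m\text{ nonsingular})\to 1$ follows from \eqref{conv} together with the a.s.\ positive definiteness of $V_m$ and the invertibility of $\Psi$---is correct and would indeed suffice for the application in Theorem~\ref{sn1}.
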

 \begin{proof}
We write the matrix $\Lambda$ as
$$ \Lambda =  \left(
  \begin{array}{ccc}
    (\lambda_{ij})_{1\le i\le m{;}1\le j\le k_0} &|& I_m \\
  \end{array}
\right)  $$
we have for $t=1,...,n$:
$$ S_t=\left(
  \begin{array}{c}
    S_t^1 \\
    \vdots \\
    S_t^m
  \end{array}
\right) =  \left(
  \begin{array}{c}
    \left ( \sum_{j=1}^t  (\mbox{$\sum_{l=1}^{k_0}\lambda_{1l} Y^l_j$})  + e_je_{j-1} \right ) - t\Gamma_e(1)\\
    \vdots \\
     \left (   \sum_{j=1}^t (\mbox{$\sum_{l=1}^{k_0}\lambda_{ml} Y^l_j$})  + e_je_{j-m} \right ) - t\Gamma_e(m)\\
  \end{array}
\right), $$
with $ Y^l_j = e_{j}(\vartheta_0)\frac{\partial e_{j}(\vartheta_0)}{\partial \vartheta_l}$.
We remark that
\begin{align}\label{st}
S_{t+1}^i& =S_t^i + (\mbox{$\sum_{l=1}^{k_0}\lambda_{il} Y^l_{t+1}$}) +  e_{t+1}e_{t+1-i}   -\Gamma_e(i)\ .
\end{align}
If the matrix $C_m$ is not invertible, there exits some real constants $c_1,...,c_m$, not all equal to zero, such that for any $t=1,...,n$ we have
$ \sum_{i=1}^m {c_i}S_t^i=0$. By \eqref{st}, it would imply that
$$ \sum_{i=1}^m c_i \left (  (\mbox{$\sum_{l=1}^{k_0}\lambda_{il} Y^l_{t}$})+e_{t}e_{t-i} \right ) = \sum_{i=1}^m c_i\Gamma_e(i)\ .$$
By the ergodic Theorem, $\sum_{i=1}^m c_i\Gamma_e(i)\to 0$ almost-surely as $n$ goes to infinity. Consequently, for any $t\ge 1$, $ \sum_{i=1}^m c_i  (\mbox{$\sum_{l=1}^{k_0}\lambda_{il} Y^l_{t}$})+c_i e_{t}e_{t-i} =0$. Using \eqref{ut} yields that
$$    e_t \left ( \sum_{k=1}^\infty \big (\mbox{$\sum_{i=1}^m  c_i \sum_{l=1}^{k_0}\lambda_{il}d_{k,l}$}\big )  e_{t-k} + \sum_{k=1}^m  c_k e_{t-k}  \right ) = 0\ ,$$
or equivalently
$$    e_t \left ( \sum_{k=1}^m \big [\mbox{$\sum_{i=1}^m  c_i \sum_{l=1}^{k_0}\lambda_{il}d_{k,l}$} +c_k \big ]e_{t-k} + \sum_{k=m+1}^\infty \big [ \mbox{$\sum_{i=1}^m  c_i \sum_{l=1}^{k_0}\lambda_{il}d_{k,l}$}\big ] e_{t-k}  \right ) = 0\ .$$

Therefore, there exists a sequence $(\alpha_k)_{k\ge 1}$, with $\sum_{k\ge 1} |\alpha_k| <\infty $, such that
$e_t  \sum_{k\ge 1}\alpha_k e_{t-k} = 0$. Thanks to Assumption {\bf (A4)}, $e_t$ has a positive density in some neighborhood of zero and then $e_t\neq 0$ almost-surely. So we would have
$\sum_{k\ge 1}\alpha_k e_{t-k} = 0$. Since the variance of the linear innovation process is not equal to zero, all the coefficients $\alpha_k$ vanish. Then
$$ 0=\alpha_k = \left \{
\begin{array}{ll}
     \mbox{$\sum_{i=1}^m  c_i \sum_{l=1}^{k_0}\lambda_{il}d_{k,l}$} +c_k &\quad \mbox{if $1\le k\le m$}\\
     \mbox{$\sum_{i=1}^m  c_i \sum_{l=1}^{k_0}\lambda_{il}d_{k,l}$}  &\quad \mbox{if $ k> m$}\ .\\
  \end{array}
\right . $$
Then we would have $c_1=...=c_m=0$ which is impossible. Thus we have a contradiction and the matrix $C_m$ is non singular.
\end{proof}
\begin{remi}\label{rem-inv}
The property that $e_t$ has a positive density in some neighborhood of zero is necessary to ensure the invertibility of $C_m$. Indeed, if we choose an independent noise sequence such that the law of $e_t$ is given by
$\frac{1}2\delta_0 +\frac{1}2 \mathcal N(0{,}1)$, then the event $E = \cap_{i=1}^n \{e_i=0\}$ has a positive probability (actually equals to $2^{-n}$). On this event, the matrix $C_m$ is clearly singular because is it equal to the null matrix. It is worth to notice that this counterexample may apply to the work of Lobato (see \cite{lobato}) and it seems that this hypotheses is missing in this work. Of course, the probability of $E$ is very small ($\mathbb P(E) < 10^{-60} $ for $n=200$ which is the lower $n$ chosen in our simulation) and thus it is quite normal that we do obtain any singular matrix $C_m$ in our numerical studies.
\end{remi}

\newpage
\section{Empirical size for small length series}
\begin{table}[h!]
 \caption{\small{Empirical size (in \%) of the modified  and standard  versions
 of the LB and BP tests in the case of ARMA$(1,1)$. %: model
%(\ref{ARMA01MonteCarlo}), with the parameter $\theta_0=(0.95,-0.6)'$ and $\alpha_1=0.4$.
The nominal asymptotic level of the tests is $\alpha=5\%$.
The number of replications is $N=1000$. }}
\begin{center}
\begin{tabular}{ccc ccc ccc}
\hline\hline \\
Model& Length $n$ & Lag $m$ & $\mathrm{{LB}}_{\textsc{sn}}$&$\mathrm{BP}_{\textsc{sn}}$&$\mathrm{{LB}}_{\textsc{frz}}$&$\mathrm{BP}_{\textsc{frz}}$&
$\mathrm{{LB}}_{\textsc{s}}$&$\mathrm{BP}_{\textsc{s}}$
\vspace*{0.2cm}\\\hline
&& $1$&5.9 &5.9 &4.9 &4.5&n.a. &n.a.\\
&& $2$&4.6  & 4.4  & 4.5 & 4.3&n.a. &n.a.\\
&& $3$&4.9 & 4.5 & 4.3 & 4.1& 11.8& 11.3\\
&& $4$&4.0& 3.8& 5.1& 4.6& 9.4 &8.9\\
I &$n=200$& $5$&4.3& 4.2& 4.5& 4.1 &8.5 &8.0\\
 &&$6$&4.2& 3.5 &4.9 &4.6 &7.2 &6.5\\
 &&$8$&5.0& 4.4& 4.4 &3.9& 6.2& 5.2\\
 &&$10$&4.1& 3.7& 4.8& 3.7& 6.1& 5.1\\
 &&$12$&3.5& 2.8& 4.2& 3.1 &5.9 &4.7\\
\hline
&& $1$&5.6& 5.6& 5.8& 5.6&n.a. &n.a.\\
&& $2$&4.6  & 4.2  & 4.4  & 4.2&n.a. &n.a.\\
&& $3$&2.0 & 2.0 & 3.6 & 3.2 &19.4& 19.0\\
II &$n=200$& $4$&2.0 & 2.0 & 3.4 & 3.2& 15.2& 14.6\\
 && $5$&1.8 & 1.8 & 2.4 & 2.0 &16.0& 14.8\\
 &&$6$&1.6 & 1.6 & 2.8 & 2.8& 15.0& 14.2\\
 &&$8$&2.8 & 2.2&  3.0&  2.4& 12.0& 10.4\\
 %&&$12$&3.8 &3.8&2.1& 2.0 &5.6& 5.6\\
\hline
&& $1$&3.4& 3.4& 3.1& 2.9&n.a. &n.a.\\
&& $2$&3.2  & 2.9 &  2.5  & 2.5&n.a. &n.a.\\
&& $3$&1.7 & 1.5 & 2.2 & 2.1& 18.5& 17.6\\
III &$n=200$& $4$&1.5 & 1.2 & 2.4 & 2.2 &13.5 &12.6\\
&& $5$&1.4 & 1.3 & 1.9 & 1.4& 10.5 & 9.4\\
 &&$6$&1.2 &1.0& 1.6& 1.3 &8.7& 8.0\\
 &&$8$&0.9& 0.8& 1.5& 1.0& 7.7& 7.2\\

\hline
&& $1$&3.0& 3.0 &3.3& 3.0&n.a. &n.a.\\
&& $2$& 3.7  & 3.7  & 2.7  &2.7&n.a. &n.a.\\
&& $3$&1.7& 1.3  &1.7  &1.7 &17.7& 17.0\\
IV &$n=200$& $4$&0.7& 0.6& 1.0& 1.0& 12.0& 11.3\\
&& $5$&0.4& 0.3 & 0.9 & 0.8& 11.4& 11.2\\
 &&$6$&0.3& 0.3 & 1.6  &1.3& 9.2 & 8.6\\
 %&&$12$&5.0 &4.8 &1.0& 0.9& 9.9& 9.5\\
\hline
&& $1$&5.3& 5.3& 4.3& 4.2&n.a. &n.a.\\
&& $2$&4.0  & 4.0  & 5.1  & 5.0&n.a. &n.a.\\
&& $3$&3.6 & 3.3 & 3.8 & 3.5& 13.3 &12.8\\
V &$n=200$& $4$&4.9& 4.7& 4.0& 4.0& 9.0& 8.5\\
&& $5$&4.0& 3.8 &4.4 &4.0& 8.1& 7.3\\
 &&$6$&4.6& 4.4& 4.7& 4.3& 7.7& 7.2\\
 &&$8$&3.4& 2.9& 4.7& 4.1& 6.7& 5.6\\
 &&$10$&4.9 &3.9 &4.7 &3.9 &6.5& 5.2\\
 &&$12$&4.1& 3.2& 5.5 &3.8 &7.2 &5.5\\
\hline\hline
\\
\multicolumn{9}{l}{I: Strong ARMA(1,1) model
(\ref{ARMA01MonteCarlo})-(\ref{bruitARCH}) with $(\alpha_1,\beta_1)=(0,0)$}\\
\multicolumn{9}{l}{II: Weak ARMA(1,1) model
(\ref{ARMA01MonteCarlo})-(\ref{bruitARCH}) with $(\alpha_1,\beta_1)=(0.1,0.85)$.}\\
\multicolumn{9}{l}{III: Weak ARMA(1,1) model
(\ref{ARMA01MonteCarlo})-(\ref{PT}).}\\
\multicolumn{9}{l}{IV: Weak ARMA(1,1) model
(\ref{ARMA01MonteCarlo})-(\ref{PTcarre}).}\\
\multicolumn{9}{l}{V: Weak ARMA(1,1) model
(\ref{ARMA01MonteCarlo})-(\ref{RT}).}
\\
%\multicolumn{9}{l}{$\mathrm{\tilde{LB}_{\textsc{frz}}}$ and $\mathrm{BP_{\textsc{frz}}}$ refer to LB and BP tests using  $\tilde{Q}_m^\textsc{lb}$ and $Q_m^\textsc{bp}$ as in \cite{frz}.}\\
%\multicolumn{9}{l}{$\mathrm{\tilde{LB}_{\textsc{s}}}$ and $\mathrm{BP_{\textsc{s}}}$ refer to  LB and BP tests using the statistics $\tilde{Q}_m^\textsc{lb}$ and $Q_m^\textsc{bp}$.}
%\\
\end{tabular}
\end{center}
\label{tabARMAreferee}
\end{table}

\ \newpage
\begin{table}[h!]
 \caption{\small{Empirical size (in \%) of the modified  and standard  versions
 of the LB and BP tests in the case of VARMA$(1,1)$. %: model
%(\ref{ARMA01MonteCarlo}), with the parameter $\theta_0=(0.95,-0.6)'$ and $\alpha_1=0.4$.
The nominal asymptotic level of the tests is $\alpha=5\%$.
The number of replications is $N=1000$. }}
\begin{center}
\begin{tabular}{ccc ccc ccc}
\hline\hline \\
Model& Length $n$ & Lag $m$ & $\mathrm{{{LB}}}_{\textsc{sn}}$&$\mathrm{{{BP}}}_{\textsc{sn}}$&$\mathrm{{{LB}}}_{\textsc{bm}}$&
$\mathrm{{{BP}}}_{\textsc{bm}}$&$\mathrm{{{LB}}}_{\textsc{s}}$&$\mathrm{{{BP}}}_{\textsc{s}}$
\vspace*{0.2cm}\\\hline
&& $1$&3.8& 3.8& 3.6& 3.4&n.a.&n.a.\\
&& $2$&3.7  & 3.9  & 3.6  & 3.5&n.a.&n.a.\\
I &$n=200$& $3$&3.5 & 3.7 & 2.8 & 2.7& 16.7& 15.8\\
 &&$4$&3.4& 3.6& 2.7& 2.4& 9.3& 7.6\\
 &&$5$&3.2& 3.5& 2.4 &2.1 &6.9 &5.9
\\
 %&&$6$&3.3& 2.4& 4.4 &3.9& 6.8& 6.3\\
\hline
&& $1$&2.9& 2.9& 5.9 &5.7&n.a.&n.a.\\
&& $2$&1.9   &1.9  & 4.7  & 4.6&n.a.&n.a.\\
II &$n=200$& $3$&0.6 & 0.7 & 2.9&  2.9& 32.5& 30.8\\
 &&$4$&0.7 & 0.9&  4.1&  4.1& 22.3& 21.2\\
 &&$5$&1.2 & 1.2 & 5.1&  5.0& 17.4& 16.1
\\
% &&$6$&4.0 &4.0 &4.8& 4.7 &6.3& 6.2\\
\hline
&& $1$&1.9 &1.9& 8.3& 8.2&n.a.&n.a.\\
&& $2$&0.4  & 0.5&  12.5 & 12.2&n.a.&n.a.\\
III &$n=200$& $3$&0.0 & 0.0& 16.2 &15.9& 41.6& 40.7\\
 &&$4$&0.0&  0.0& 22.3& 22.2& 31.6& 29.9\\
 &&$5$&0.0 & 0.0 &30.4& 29.9& 24.5 &22.4\\
 %&&$6$&2.4 &2.5& 2.9 &2.9 &4.3& 4.3\\
\hline
&& $1$&0.3 & 0.3& 11.7 &11.5&n.a.&n.a.\\
&& $2$&0.1 & 0.2& 14.8& 14.4&n.a.&n.a.\\
IV &$n=200$& $3$&0.2 & 0.2& 18.6& 18.5& 28.4& 27.9\\
 &&$4$&0.4 & 0.4& 23.5& 23.2& 21.7& 21.3\\
 &&$5$&0.5 & 0.5& 33.7& 33.3& 18.7& 18.0\\
 %&&$6$&0.7 & 0.5 & 3.7 & 3.4 &20.4& 19.7\\
\hline
&& $1$&3.8& 3.8& 5.1 &4.7&n.a.&n.a.\\
&& $2$&3.8  & 4.1  & 4.3 &  3.7&n.a.&n.a.\\
V &$n=200$& $3$&3.4&  3.6 & 3.6 & 3.3 &16.5& 15.3
\\
 &&$4$&3.4& 3.7& 0.9& 0.6& 8.9& 8.0\\
 &&$5$&3.1& 3.2& 0.6& 0.5& 6.0& 5.2\\
 %&&$6$&2.1&  2.1&  3.2&  3.2 &26.1 &25.8\\

\hline\hline \\
\multicolumn{9}{l}{I: Strong VARMA$(1,1)$ model
(\ref{VARMA11MonteCarlo})-(\ref{bruitfort}).}\\
\multicolumn{9}{l}{II: Weak VARMA$(1,1)$ model
(\ref{VARMA11MonteCarlo})-(\ref{ARCH1}).}\\
\multicolumn{9}{l}{III: Weak VARMA$(1,1)$ model
(\ref{VARMA11MonteCarlo})-(\ref{multiPT}).}\\
 \multicolumn{9}{l}{IV: Weak VARMA$(1,1)$ model
(\ref{VARMA11MonteCarlo})-(\ref{multiPTcarre}).}\\
\multicolumn{9}{l}{V: Weak VARMA$(1,1)$ model
(\ref{VARMA11MonteCarlo})-(\ref{multiRT}).}\\
%\multicolumn{9}{l}{$\mathrm{\tilde{\mathbf{LB}}_{\textsc{sn}}}$ and  $\mathrm{{\mathbf{BP}}_{\textsc{sn}}}$ refer to modified test using the statistics $\tilde{\mathbf{Q}}_m^{\textsc{sn}}$ and ${\mathbf{Q}}_m^{\textsc{sn}}$.}
%\\
%\multicolumn{9}{l}{$\mathrm{\tilde{\mathbf{LB}}_{\textsc{bm}}}$ and $\mathrm{{\mathbf{BP}}_{\textsc{bm}}}$ refer to  LB and BP tests with $\tilde{\mathbf{Q}}_m^\textsc{h}$ and ${\mathbf{Q}}_m^\textsc{c}$ as in \cite{yac}.}\\
%\multicolumn{9}{l}{$\mathrm{\tilde{\mathbf{LB}}_{\textsc{s}}}$ and $\mathrm{{\mathbf{BP}}_{\textsc{s}}}$ refr to  LB and BP tests using the statistics $\tilde{\mathbf{Q}}_m^\textsc{h}$ and ${\mathbf{Q}}_m^\textsc{c}$.}
%\\
\end{tabular}
\end{center}
\label{tab1VARMAreferee}
\end{table}
\newpage
\section{The Standard \& Poor's 500 index as an illustrative example}\label{otherill}
As we did for the CAC40, we also consider an application to the daily $\log$ returns (also simply called the returns) of the S\&P500\footnote{
The Standard \& Poor's 500 index. }.
The observations cover the period from March 1, 1979 to December 31, 2001.
The length of the series is $n=5808$. The data can be downloaded
from the website Yahoo Finance: http://fr.finance.yahoo.com/.
Figure~\ref{figsp500}
shows that the S\&P500 index series are generally close to a random
walk without intercept and that the returns are generally compatible with the second-order stationarity
assumption.
\begin{figure}[h!]
\centering
\includegraphics[width=0.9\textwidth]{}
\caption{\label{figsp500} {\footnotesize Closing prices and returns of the S\&P 500 index from January 3, 1979 to
December 31, 2001 (5807 observations). }}
\end{figure}

%For instance, the returns oscillate around zero. The oscillations vary a great deal in magnitude, but are almost constant
%in average over long subperiods.

First, we apply portmanteau tests
for checking the hypothesis that the S\&P500 returns constitute a white noise. Table \ref{sp500BB} displays the statistics of the standard and modified BP tests.
Since the $p$-values of the
standard test are very small, the white noise hypothesis is rejected at the nominal level $\alpha=1\%$.
This is not surprising because the standard tests required the iid assumption and, in particular in view of
the so-called volatility clustering, it is well known that the strong white noise model is not adequate for these series.
By contrast, the  white noise hypothesis is not rejected by the modified tests,
since for the modified tests, the statistic  is not larger than the critical values (see Table 1 in \cite{lobato}). This is also in accordance with other works devoted to the analysis of stock-market returns \cite{lobatoNS2001}.
To summarize, the outputs of Table~\ref{sp500BB} are in accordance with the
common belief that these series are not strong white noises, but could be weak white noises.

\begin{table}[h!]
 \caption{\small{Modified and standard versions
 of portmanteau tests to check the null hypothesis that the S\&P 500 returns is a white noise. }}
\begin{center}
{\small
%\begin{tabular}{l rrr rrrr r}
\begin{tabular}{c ccc cccc c}
\hline\hline
Lag $m$& &2&3&4 &5&10&18& 24\\
\hline$\hat\rho(m)$&& -0.03894& -0.03278& -0.01951& 0.00802&0.00408&-0.01833&0.01885
\\
$\mathrm{LB}_{\textsc{sn}}$&&62.8145&280.930&326.492&529.533&1425.52&1723.81&2323.05
\\
$\mathrm{BP}_{\textsc{sn}}$&&62.7759&280.712&326.257&529.140&1424.20&1722.54&2321.68
\\
\cline{3-9}  $\mathrm{LB}_{\textsc{frz}}$&&16.7123& 22.9587& 25.1716&25.5460&28.3375&38.4913&46.6186
\\
$\mathrm{BP}_{\textsc{frz}}$&&16.7021&22.9432&25.1538&25.5278&28.3145&38.4420&46.5352
\\
\cline{3-9}
$\mbox{p}_{\textsc{sn}}^{\textsc{lb}}$&&0.12053&0.01302&0.02585&0.01319&0.01139&0.26024&0.48544
\\
$\mbox{p}_{\textsc{sn}}^{\textsc{bp}}$&&0.12064&0.01306&0.02593&0.01323&0.01142&0.26077&0.48610
\\
\cline{3-9}  $\mbox{p}_{\textsc{frz}}^{\textsc{lb}}$&&0.03527&0.01917&0.02594&0.16036&0.25624&0.30714&0.34018
\\
$\mbox{p}_{\textsc{frz}}^{\textsc{bp}}$&&0.03533&0.01922&0.02601&0.16054&0.25657&0.30794&0.34149
\\
\cline{3-9}  $\mbox{p}_{\textsc{S}}^{\textsc{lb}}$&&0.00023&0.00004&0.00004&0.00010&0.00159&0.00333&0.00373
\\
$\mbox{p}_{\textsc{S}}^{\textsc{bp}}$&& 0.00024&0.00004&0.00004&0.00011&0.00160&0.00338&0.00381
\\
\hline\hline
\end{tabular}
}
\end{center}
\label{sp500BB}
\end{table}

Next, turning to the dynamics of the squared returns, we fit an ARMA$(1,1)$ model to the squares of the S\&P500 returns.
Denoting by $(X_t)$ the mean corrected
series of the squared returns, we obtain the model
\begin{eqnarray*}
X_{t}=0.83123X_{t-1}+\epsilon_{t}-0.72676\epsilon_{t-1},
\text{ where }\mbox{Var}(\epsilon_{t})=52.62582\times10^{-8}.
\end{eqnarray*}
Table \ref{sp500ARMA11} displays the statistics of the standard and modified BP tests.
From Table \ref{sp500ARMA11}, we draw the same conclusion, on the squares of the previous daily returns,
that the strong ARMA$(1,1)$ model is
rejected, but a weak ARMA$(1,1)$ model is not rejected. Note that the first and second-order structures we found for the S\&P500 returns, namely
a weak white noise for the returns and a weak ARMA$(1,1)$ model for the squares of the returns, are
compatible with a GARCH$(1,1)$ model.

%
% Trucs pour utiliser .eps
%\vspace*{12cm} \protect \special{psfile=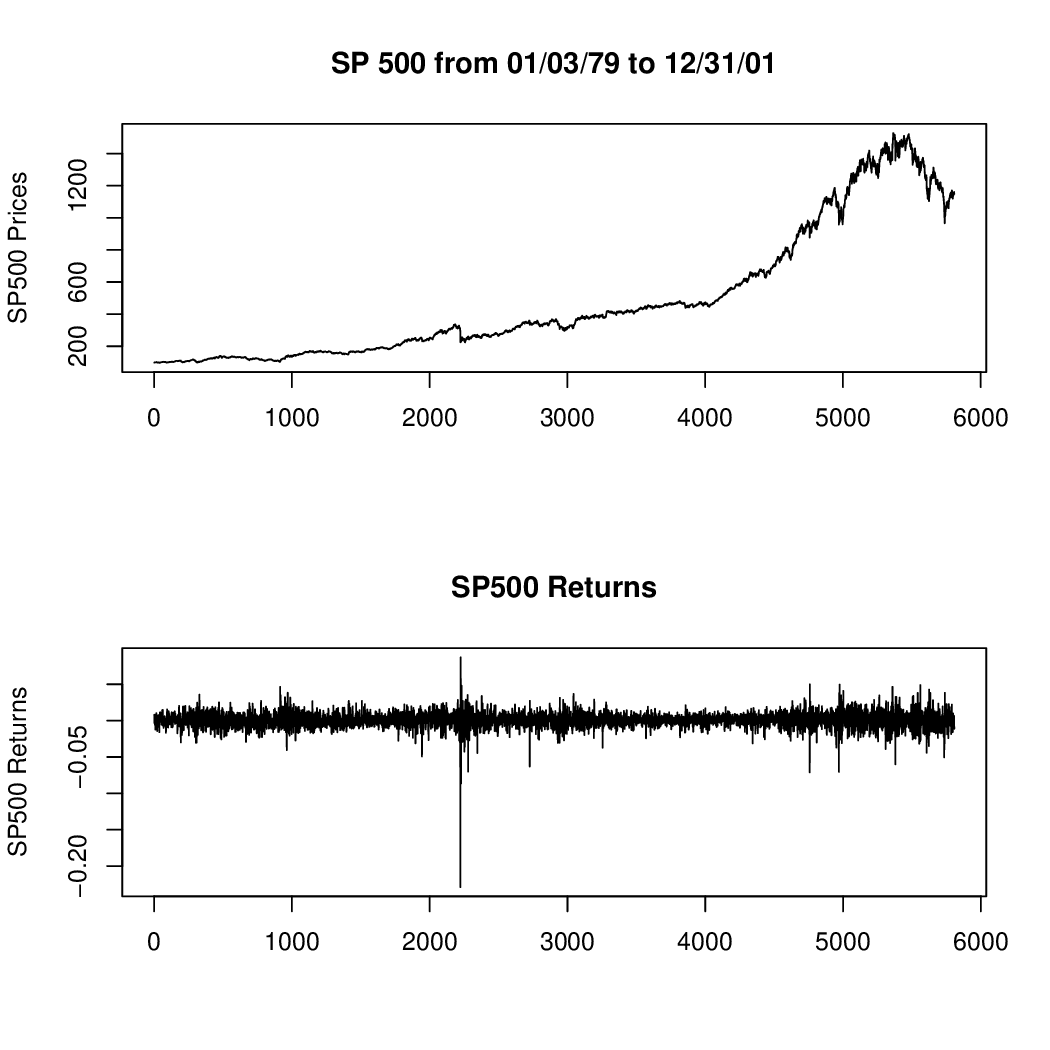
%voffset=-200 hoffset=-45 hscale=80 vscale=80} \vspace*{2.5cm}
%
\begin{table}[h!]
 \caption{\small{Modified and standard versions
 of portmanteau tests to check the null hypothesis that the S\&P 500 squared returns follow an ARMA$(1,1)$ model. }}
\begin{center}
{\small
\begin{tabular}{c ccc cccc c}
\hline\hline
Lag $m$& &1&2&3 &4&5&6& 7\\
\hline$\hat\rho(m)$&& -0.01978&  0.04776&-0.01510& -0.05975&  0.08872&-0.02572& -0.03186
\\
$\mathrm{LB}_{\textsc{sn}}$&&7.27253&11.5733&94.7095&117.522&120.184&175.809&195.675
\\
$\mathrm{BP}_{\textsc{sn}}$&& 7.26878&11.5731&94.6567&117.370&120.028&175.496&195.237
\\
\cline{3-9}
$\mathrm{LB}_{\textsc{frz}}$&&2.27281&15.5306&16.8568&37.6101&83.3760&87.2225&93.1247
\\
$\mathrm{BP}_{\textsc{frz}}$&&2.27164&15.5203&16.8454&37.5772&83.2880&87.1292&93.0222
\\
\cline{3-9}
$\mbox{p}_{\textsc{sn}}^{\textsc{lb}}$&&0.34481&0.57434&0.16482&0.24529&0.40148&0.38794&0.48841
\\
$\mbox{p}_{\textsc{sn}}^{\textsc{bp}}$&&0.34495&0.57434&0.16495&0.24573&0.40205&0.38879&0.48967
\\
\cline{3-9}  $\mbox{p}_{\textsc{frz}}^{\textsc{lb}}$&&0.36613&0.34701&0.32849&0.32611&0.32051&0.31997&0.74110
\\
$\mbox{p}_{\textsc{frz}}^{\textsc{bp}}$&&0.36608&0.34719&0.32865&0.32632&0.32076&0.32023&0.74123
\\
\cline{3-9}  $\mbox{p}_{\textsc{S}}^{\textsc{lb}}$&&n.a.&n.a.&0.00004&0.00000&0.00000&0.00000&0.00000
\\
$\mbox{p}_{\textsc{S}}^{\textsc{bp}}$&&n.a.&n.a.&0.00004&0.00000&0.00000&0.00000&0.00000
\\
\hline
Lag $m$& &8&9&10 &12&18&20& 24\\
\hline$\hat\rho(m)$&&0.01686&  0.00422& -0.01346&-0.00686&0.00946&-0.00175&0.01174
\\
$\mathrm{LB}_{\textsc{sn}}$&&204.217&271.635&298.802&372.828&1072.04&1403.02&1451.54
\\
$\mathrm{BP}_{\textsc{sn}}$&&203.764&270.793&297.731&371.484&1069.14&1397.58&1445.80
\\
\cline{3-9}
$\mathrm{LB}_{\textsc{frz}}$&&94.7779&94.8815&95.9359&96.3334&97.2783&98.2907&99.4676
\\
$\mathrm{BP}_{\textsc{frz}}$&& 94.6726&94.7760&95.8282&96.2249&97.1667&98.1754&99.3472
\\
\cline{3-9}
$\mbox{p}_{\textsc{sn}}^{\textsc{lb}}$&&0.61390&0.59294&0.66719&0.76105&0.64801&0.60942&0.85991
\\
$\mbox{p}_{\textsc{sn}}^{\textsc{bp}}$&&0.61512&0.59480&0.66914&0.76285&0.65004&0.61268&0.86192
\\
\cline{3-9}  $\mbox{p}_{\textsc{frz}}^{\textsc{lb}}$&&0.57299&0.30637&0.51343&0.57000&0.40044&0.44177&0.32510
\\
$\mbox{p}_{\textsc{frz}}^{\textsc{bp}}$&& 0.57321&0.30664&0.51367&0.57022&0.40072&0.44205&0.32541
\\
\cline{3-9}  $\mbox{p}_{\textsc{S}}^{\textsc{lb}}$&&0.00000&0.00000&0.00000&0.00000&0.00000&0.00000&0.00000
\\
$\mbox{p}_{\textsc{s}}^{\textsc{bp}}$&&0.00000&0.00000&0.00000&0.00000&0.00000&0.00000&0.00000
\\
\hline\hline
\end{tabular}
}
\end{center}
\label{sp500ARMA11}
\end{table}
\newpage
\section{Empirical power using asymptotic critical values}
In Section \ref{emp-power}, we investigate the size  adjusted power which is the rate at which the null hypothesis is rejected, when it is false, and when the critical value that is used is the one that will actually ensure that there is no size distortion. This new critical value requires some Monte Carlo simulation. In terms of a day-to-day application, we usually wouldn't do this. In that case, what's of interest is the "raw power" of the test,  namely the rate at which it rejects false null hypotheses when the asymptotic critical value is used. This is what we do below in the context described in Section \ref{emp-power}.

With this notion of power, we remark that the powers our tests become comparable to the standard ones, even when $n=500$.
\newpage
\begin{table}[h!]
 \caption{\small{Empirical power (in \%) of the modified and standard versions
 of the LB and BP tests in the case of ARMA$(2,1)$ model.
%The nominal asymptotic level of the tests is $\alpha=5\%$.
The number of replications is $N=1000$. }}
\begin{center}
%\begin{tabular}{lll rrr rrr}
\begin{tabular}{ccc ccc ccc}
\hline\hline \\
Model& Length $n$ & Lag $m$ & $\mathrm{{LB}}_{\textsc{sn}}$&$\mathrm{BP}_{\textsc{sn}}$&$\mathrm{{LB}}_{\textsc{frz}}$&$\mathrm{BP}_{\textsc{frz}}$&$\mathrm{{LB}}_{\textsc{s}}$&$\mathrm{BP}_{\textsc{s}}$
\vspace*{0.2cm}\\\hline
&& $1$&77.6 &77.6 &91.4& 91.3&n.a. &n.a.\\
&& $2$&64.6 & 64.6 & 86.3 & 86.2&n.a. &n.a.\\
I &$n=500$& $3$&59.8& 59.5& 85.2& 85.0& 93.9& 93.8\\
 &&$6$&50.2& 49.5& 77.5& 76.7& 83.3& 83.0\\
 &&$12$&36.6& 35.9& 65.1& 63.8& 73.0& 72.3\\
 \cline{2-9}
&& $1$&99.7 & 99.7 &100.0 &100.0&n.a. &n.a.\\
&& $2$&97.5 & 97.5&100.0& 100.0&n.a. &n.a.\\
I &$n=2,000$& $3$&96.8 & 96.8& 100.0 &100.0& 100.0& 100.0\\
 &&$6$&96.9 & 96.9 &100.0& 100.0& 100.0& 100.0\\
 &&$12$&95.4&  95.3& 100.0& 100.0& 100.0& 100.0\\
  \cline{2-9}
&& $1$&100.0& 100.0 &99.9 &100.0&n.a. &n.a.\\
&& $2$&100.0& 100.0 &100.0& 100.0 &n.a. &n.a.\\
I &$n=10,000$& $3$&100.0& 100.0 &100.0 &100.0& 100.0& 100.0\\
 &&$6$&100.0& 100.0 &100.0 &100.0& 100.0& 100.0\\
 &&$12$&100.0& 100.0 &100.0 &100.0& 100.0& 100.0\\
 %&&$18$&100.0& 100.0 &100.0 &100.0& 100.0& 100.0\\

\hline
&& $1$&62.7& 62.4& 87.5& 87.3&n.a. &n.a.\\
&& $2$&47.6 & 47.2 & 79.1 & 78.9&n.a. &n.a.\\
II &$n=500$& $3$&40.4& 40.0& 75.5& 75.0& 92.6& 92.6\\
 &&$6$&28.4& 27.8& 60.2& 59.8& 84.4& 83.9\\
 &&$12$&12.8& 12.2& 41.0& 39.5& 77.0& 76.3
\\
 \cline{2-9}
&& $1$&97.9 & 97.9 &100.0& 100.0 &n.a. &n.a.\\
&& $2$&92.0 & 92.0& 100.0& 100.0&n.a. &n.a.\\
II &$n=2,000$& $3$&90.7 & 90.7& 100.0& 100.0 &100.0 &100.0\\
 &&$6$&89.3 & 89.2&  99.8 & 99.8& 100.0 &100.0\\
 &&$12$&83.2& 83.2& 99.2& 99.2& 99.9& 99.9\\
  \cline{2-9}
&& $1$&100.0 &100.0&100.0 &100.0&n.a. &n.a.\\
&& $2$&100.0 &100.0& 100.0& 100.0&n.a. &n.a.\\
II &$n=10,000$& $3$&99.9& 99.9& 100.0& 100.0& 100.0& 100.0\\
 &&$6$&99.9 & 99.9& 100.0 &100.0 &100.0& 100.0\\
 &&$12$&100.0& 100.0& 100.0& 100.0& 100.0& 100.0\\
  \hline
&& $1$&51.0& 50.5& 75.6& 75.3&n.a. &n.a.\\
&& $2$&38.1 & 37.7&  69.5 & 69.3&n.a. &n.a.\\
III &$n=500$& $3$&31.0& 30.8& 69.3& 68.9& 88.5& 88.4\\
 &&$6$&20.0& 19.3& 58.3& 57.8& 78.3& 77.8\\
 &&$12$&8.8&  8.4& 45.3& 44.1& 68.5& 67.7\\
 \cline{2-9}
&& $1$&95.2& 95.2& 99.8& 99.8&n.a. &n.a.\\
&& $2$&90.5 & 90.5 & 99.8 & 99.8&n.a. &n.a.\\
III &$n=2,000$& $3$&85.7 & 85.7 & 99.9 & 99.9& 100.0& 100.0
\\
 &&$6$&80.0& 80.0& 99.7& 99.7& 99.9 &99.9\\
 &&$12$&71.9& 71.9& 99.6& 99.6& 99.8& 99.8\\
  \cline{2-9}
&& $1$&100.0& 100.0 & 100.0 &100.0&n.a. &n.a.\\
&& $2$&100.0& 100.0 &100.0& 100.0&n.a. &n.a.\\
III &$n=10,000$& $3$&99.5 & 99.5& 100.0 &100.0 &100.0 &100.0\\
 &&$6$&99.7 & 99.7& 100.0 &100.0 &100.0& 100.0\\
 &&$12$&100.0& 100.0 & 100.0 &100.0 &100.0 &100.0\\
\hline\hline \\
\multicolumn{9}{l}{I: Strong ARMA$(2,1)$ model
(\ref{lutkepohl})-(\ref{bruitARCH}) with $(\alpha_1,\beta_1)=(0,0)$}\\
\multicolumn{9}{l}{II: Weak ARMA$(2,1)$ model
(\ref{lutkepohl})-(\ref{bruitARCH}) with $(\alpha_1,\beta_1)=(0.1,0.85)$}\\
\multicolumn{9}{l}{III: Weak ARMA$(2,1)$ model
(\ref{lutkepohl})-(\ref{PT}).}\\
%\multicolumn{9}{l}{$\mathrm{\tilde{LB}_{\textsc{sn}}}$ and $\mathrm{BP_{\textsc{sn}}}$ refer to modified test using the statistics $\tilde{Q}_m^{\textsc{sn}}$ and $Q_m^{\textsc{sn}}$.}
%\\
%\multicolumn{9}{l}{$\mathrm{\tilde{LB}_{\textsc{frz}}}$ and $\mathrm{BP_{\textsc{frz}}}$ refer to LB and BP tests using  $\tilde{Q}_m^\textsc{lb}$ and $Q_m^\textsc{bp}$ as in \cite{frz}.}\\
%\multicolumn{9}{l}{$\mathrm{\tilde{LB}_{\textsc{s}}}$ and $\mathrm{BP_{\textsc{s}}}$ refer to  LB and BP tests using the statistics $\tilde{Q}_m^\textsc{lb}$ and $Q_m^\textsc{bp}$.}
%\\
\end{tabular}
\end{center}
\label{tab2ARMAbis}
\end{table}

\begin{table}[hbt]
 \caption{\small{Empirical power (in \%) of the modified and standard  versions
 of the LB and BP tests in the case of ARMA$(2,1)$ model.
%The nominal asymptotic level of the tests is $\alpha=5\%$.
The number of replications is $N=1000$. }}
\begin{center}
\begin{tabular}{ccc ccc ccc}
\hline\hline \\
Model& Length $n$ & Lag $m$ & $\mathrm{{LB}}_{\textsc{sn}}$&$\mathrm{BP}_{\textsc{sn}}$&$\mathrm{{LB}}_{\textsc{frz}}$&$\mathrm{BP}_{\textsc{frz}}$&$\mathrm{{LB}}_{\textsc{s}}$&$\mathrm{BP}_{\textsc{s}}$
\vspace*{0.2cm}\\\hline
&& $1$&39.8& 39.4& 62.0 &61.4&n.a. &n.a.\\
&& $2$&29.7 & 29.6 & 56.5 & 56.5&n.a. &n.a.\\
IV &$n=500$& $3$&23.5& 23.2& 56.5& 56.4& 86.9& 86.8\\
 &&$6$&13.0 &12.4 &45.2& 44.7 &75.8& 75.7\\
 &&$12$&1.9 & 1.4& 27.9& 26.6& 65.8& 64.8\\
 \cline{2-9}
&& $1$&84.9& 84.9& 98.6 &98.6 &n.a. &n.a.\\
&& $2$&76.8 & 76.7 & 98.4 & 98.4&n.a. &n.a.\\
IV &$n=2,000$& $3$&72.4&  72.4 & 98.4 & 98.4& 100.0 &100.0
\\
 &&$6$&1.2& 61.1& 97.8& 97.7& 99.7& 99.7\\
 &&$12$&48.0& 47.6& 96.4& 96.4& 99.5& 99.5\\
  \cline{2-9}
&& $1$&100.0& 100.0 &100.0 &100.0&n.a. &n.a.\\
&& $2$&99.5& 99.5 & 100.0 &100.0 &n.a. &n.a.\\
IV &$n=10,000$& $3$&99.2  &99.2& 100.0& 100.0& 100.0 &100.0\\
 &&$6$&98.6 & 98.6&100.0 &100.0 &100.0 &100.0\\
 &&$12$&99.3 & 99.3 &100.0 &100.0 &100.0 &100.0\\
 %&&$18$&100.0& 100.0 &100.0 &100.0& 100.0& 100.0\\

\hline
&& $1$&84.6& 84.5& 99.0 &99.0&n.a. &n.a.\\
&& $2$&74.8 & 74.5 & 94.7 & 94.6&n.a. &n.a.\\
V &$n=500$& $3$&70.5& 70.3& 94.3& 94.3& 97.0& 96.9\\
 &&$6$&60.4& 59.8& 84.6& 84.2& 85.3& 84.9\\
 &&$12$&46.5& 45.9& 69.3& 68.0& 73.7& 72.7\\
 \cline{2-9}
&& $1$&100.0& 100.0& 100.0& 100.0&n.a. &n.a.\\
&& $2$& 99.4&  99.4& 100.0& 100.0&n.a. &n.a.\\
V &$n=2,000$& $3$&99.2 & 99.2& 100.0& 100.0& 100.0& 100.0\\
 &&$6$&98.7 & 98.7& 100.0& 100.0&100.0 &100.0\\
 &&$12$&99.2 & 99.1 & 99.9 & 99.9&100.0 &100.0\\
  \cline{2-9}
&& $1$&100.0& 100.0& 100.0& 100.0&n.a. &n.a.\\
&& $2$&100.0& 100.0& 100.0& 100.0&n.a. &n.a.\\
V &$n=10,000$& $3$&100.0& 100.0& 100.0& 100.0& 100.0& 100.0\\
 &&$6$&100.0& 100.0& 100.0& 100.0&100.0 &100.0\\
 &&$12$&100.0& 100.0& 100.0& 100.0&100.0 &100.0\\
\hline\hline \\
\multicolumn{9}{l}{IV: Weak ARMA$(2,1)$ model
(\ref{lutkepohl})-(\ref{PTcarre}).}\\
\multicolumn{9}{l}{V: Weak ARMA$(2,1)$ model
(\ref{lutkepohl})-(\ref{RT}).}\\
%\multicolumn{9}{l}{$\mathrm{\tilde{LB}_{\textsc{sn}}}$ and $\mathrm{BP_{\textsc{sn}}}$ refer to modified test using the statistics $\tilde{Q}_m^{\textsc{sn}}$ and $Q_m^{\textsc{sn}}$.}
%\\
%\multicolumn{9}{l}{$\mathrm{\tilde{LB}_{\textsc{frz}}}$ and $\mathrm{BP_{\textsc{frz}}}$ refer to LB and BP tests using  $\tilde{Q}_m^\textsc{lb}$ and $Q_m^\textsc{bp}$ as in \cite{frz}.}\\
%\multicolumn{9}{l}{$\mathrm{\tilde{LB}_{\textsc{s}}}$ and $\mathrm{BP_{\textsc{s}}}$ refer to  LB and BP tests using the statistics $\tilde{Q}_m^\textsc{lb}$ and $Q_m^\textsc{bp}$.}
%\\
\end{tabular}
\end{center}
\label{tab2ARMAsuitebis}
\end{table}

\
\newpage

\section{Empirical power for VARMA models}
Here we uses both empirical critical values (leading to the so called self-adjusted power) and asymptotic critical values (leading to the so-called raw-power) to compare the power of our test when one deals with VARMA models.

For that sake, we simulate $N=1,000$ independent trajectories of size $n=10,000$ of the following bivariate VARMA$(2,1)$ defined by
\begin{align}
\label{lutkepohlVARMA}
\left(\begin{array}{c}X_{1,t}\\X_{2,t}\end{array}\right)
&=\left(\begin{array}{cc}1.2&0.6\\-0.5&0.3\end{array}\right)
\left(\begin{array}{c}X_{1,t-1}\\X_{2,t-1}\end{array}\right)+\left(\begin{array}{cc}0.1&0.0\\0.0&0.1\end{array}\right)
\left(\begin{array}{c}X_{1,t-2}\\X_{2,t-2}\end{array}\right)+\left(\begin{array}{c}\epsilon_{1,t}\\\epsilon_{2,t}\end{array}\right)\nonumber\\
&\qquad -\left(\begin{array}{cc}-0.6&0.3\\0.3&0.6\end{array}\right)
\left(\begin{array}{c}\epsilon_{1,t-1}\\\epsilon_{2,t-1}\end{array}\right),
%-\left(\begin{array}{cc}-0.6&0.3\\0.3&0.6\end{array}\right)
%\left(\begin{array}{c}\epsilon_{1,t-2}\\\epsilon_{2,t-2}\end{array}\right),
\end{align}
where the multivariate, strong and weak versions of the innovation process ${\epsilon}$ are defined in  Section \ref{multiARMA}.

For each of these $N$ replications we fit a VARMA$(1,1)$ model and we perform standard and modified test based on $m=1,\dots,5$  residual autocorrelations.

Tables \ref{size  adjusted-VARMA1} and \ref{size  adjusted-VARMA2} display the relative
rejection frequencies of over the $N$ independent replications for the VARMA models when one uses the empirical critical value (size adjusted power of the test).

Tables \ref{raw-test1} and \ref{raw-test2} display the relative
rejection frequencies of over the $N$ independent replications for the VARMA models when one uses the asymptotic critical value (raw power of the test).

In these examples, the standard and modified versions of the tests have very similar powers when $n\ge 2000$.
When $n=500$, the empirical size of the tests for the models II, III and IV  are very far from the $5\%$ nominal level (see Tables \ref{tab1VARMA} and \ref{tab1VARMAsuite}) so the discussion on the power is not relevant. However, the results for the model V (for which the empirical level is close to the nominal)  are quite satisfactory whereas it is less the case for the strong model I.

\newpage
\begin{table}[h!]
 \caption{\small{Empirical size  adjusted power (in \%) of the  modified and standard versions
 of the LB and BP tests at the 5\% nominal level in the case of VARMA$(2,1)$ model.
The number of replications is $N=1000$. }}
\begin{center}
\begin{tabular}{ccc ccc ccc}
\hline\hline \\
Model& Length $n$ & Lag $m$ & $\mathrm{{{LB}}}_{\textsc{sn}}$&$\mathrm{{{BP}}}_{\textsc{sn}}$&$\mathrm{{{LB}}}_{\textsc{bm}}$&
$\mathrm{{{BP}}}_{\textsc{bm}}$&$\mathrm{{{LB}}}_{\textsc{s}}$&$\mathrm{{{BP}}}_{\textsc{s}}$
\vspace*{0.2cm}\\\hline
&& $1$&29.0 &29.0 &99.9 &99.9&  n.a. &n.a.\\
&& $2$&21.4 & 21.4 & 99.8 & 99.8&   n.a. &n.a.\\
I &$n=500$& $3$&20.7& 20.7& 99.5& 99.5 &37.4& 37.5\\
 &&$4$&15.6& 15.6& 99.2& 99.2& 31.8& 31.9\\
 &&$5$&15.5& 15.5& 99.1& 99.1& 29.4& 29.6\\
 \cline{2-9}
&& $1$&85.0 & 85.0& 100.0 &100.0&  n.a. &n.a.\\
&& $2$&78.5 & 78.5 & 100.0 &100.0&  n.a. &n.a.\\
I &$n=2,000$& $3$&76.5&  76.5& 100.0& 100.0 & 98.7 & 98.7\\
 &&$4$&69.2 & 69.2& 100.0& 100.0 & 97.5 & 97.5\\
 &&$5$&65.4 & 65.1& 100.0& 100.0 & 96.0 & 96.0\\
  \cline{2-9}
&& $1$&99.8 & 99.8 &100.0 &100.0&  n.a. &n.a.\\
&& $2$&99.9 & 99.9&100.0& 100.0 &  n.a. &n.a.\\
I &$n=10,000$& $3$&99.9 & 99.9&100.0 &100.0& 100.0& 100.0\\
 &&$4$&100.0& 100.0 &100.0 &100.0& 100.0& 100.0\\
 &&$5$&100.0& 100.0 &100.0 &100.0& 100.0& 100.0\\
 %&&$18$&100.0& 100.0 &100.0 &100.0& 100.0& 100.0\\

\hline
&& $1$&24.0& 24.0& 99.1& 99.1&  n.a. &n.a.\\
&& $2$&19.8 & 19.7  &98.6 & 98.7&  n.a. &n.a.\\
II &$n=500$& $3$&18.9& 18.7& 98.7& 98.7& 28.8& 28.8\\
 &&$4$&14.0& 13.7& 98.0& 98.0& 26.0& 26.0\\
 &&$5$&11.8& 11.6& 97.3& 97.4& 19.7 &19.7\\
 \cline{2-9}
&& $1$&63.7& 63.7& 99.9& 99.9 &  n.a. &n.a.\\
&& $2$&53.5 & 53.5 & 99.8 & 99.8&  n.a. &n.a.\\
II &$n=2,000$& $3$&53.1& 52.9& 99.8& 99.8& 82.2& 82.2\\
 &&$4$&46.8& 46.8& 99.7& 99.7& 77.1& 77.1\\
 &&$5$&42.2& 42.1& 99.9& 99.9& 76.9& 76.9
\\
  \cline{2-9}
&& $1$&98.8 & 98.8&100.0& 100.0&  n.a. &n.a.\\
&& $2$&98.9 & 98.9 & 100.0& 100.0&  n.a. &n.a.\\
II &$n=10,000$& $3$&99.0 & 99.0&100.0& 100.0&100.0& 100.0\\
 &&$4$&98.8 & 98.8 & 100.0& 100.0&100.0& 100.0\\
 &&$5$&98.4 & 98.4&100.0& 100.0&100.0& 100.0\\
  \hline
&& $1$&21.0& 21.0& 98.1 &98.1&   n.a. &n.a.\\
&& $2$&18.4 & 18.4 & 98.2 & 98.0&  n.a. &n.a.\\
III &$n=500$& $3$&16.3& 16.3& 98.3& 98.0& 22.2& 22.2\\
 &&$4$&12.5& 12.2& 97.0& 96.9& 20.1& 20.1\\
 &&$5$&12.0& 12.0& 96.5& 96.6& 20.0& 20.2\\
 \cline{2-9}
&& $1$&63.2 & 63.2 &100.0 &100.0&   n.a. &n.a.\\
&& $2$&64.2 & 64.2  &100.0 &100.0 &  n.a. &n.a.\\
III &$n=2,000$& $3$&59.2& 59.2& 99.8& 99.8& 73.6& 73.6\\
 &&$4$&53.0 &52.9 &99.9& 99.9& 73.1& 73.1\\
 &&$5$&51.2& 51.0 &99.7 &99.7& 71.1& 71.1\\
  \cline{2-9}
&& $1$&99.0 & 99.0 &100.0& 100.0&  n.a. &n.a.\\
&& $2$&99.6 & 99.6 &100.0 &100.0 &  n.a. &n.a.\\
III &$n=10,000$& $3$&99.4 & 99.4 & 100.0 &100.0 &100.0 &100.0\\
 &&$4$&99.7 & 99.7& 100.0 &100.0 &100.0& 100.0\\
 &&$5$&99.5 & 99.5& 100.0 &100.0 &100.0 &100.0\\
\hline\hline \\
\multicolumn{9}{l}{I: Strong VARMA$(2,1)$ model
(\ref{lutkepohlVARMA})-(\ref{bruitfort}).}\\
\multicolumn{9}{l}{II: Weak VARMA$(2,1)$ model
(\ref{lutkepohlVARMA})-(\ref{ARCH1}).}\\
\multicolumn{9}{l}{III: Weak VARMA$(2,1)$ model
(\ref{lutkepohlVARMA})-(\ref{multiPT}).}\\
%\multicolumn{9}{l}{$\mathrm{\tilde{\mathbf{LB}}_{\textsc{sn}}}$ and  $\mathrm{{\mathbf{BP}}_{\textsc{sn}}}$ refer to modified test using the statistics $\tilde{\mathbf{Q}}_m^{\textsc{sn}}$ and ${\mathbf{Q}}_m^{\textsc{sn}}$.}
%\\
%\multicolumn{9}{l}{$\mathrm{\tilde{\mathbf{LB}}_{\textsc{bm}}}$ and $\mathrm{{\mathbf{BP}}_{\textsc{bm}}}$ refer to  LB and BP tests with $\tilde{\mathbf{Q}}_m^\textsc{h}$ and ${\mathbf{Q}}_m^\textsc{c}$ as in \cite{yac}.}\\
%\multicolumn{9}{l}{$\mathrm{\tilde{\mathbf{LB}}_{\textsc{s}}}$ and $\mathrm{{\mathbf{BP}}_{\textsc{s}}}$ refr to  LB and BP tests using the statistics $\tilde{\mathbf{Q}}_m^\textsc{h}$ and ${\mathbf{Q}}_m^\textsc{c}$.}
%\\
\end{tabular}
\end{center}
\label{size  adjusted-VARMA1}
\end{table}

\begin{table}[h!]
 \caption{\small{Empirical size  adjusted power (in \%) of the modified and  standard versions
 of the LB and BP tests at the 5\% nominal level in the case of VARMA$(2,1)$ model.
The number of replications is $N=1000$. }}
\begin{center}
\begin{tabular}{ccc ccc ccc}
\hline\hline \\
Model& Length $n$ & Lag $m$ & $\mathrm{{{LB}}}_{\textsc{sn}}$&$\mathrm{{{BP}}}_{\textsc{sn}}$&$\mathrm{{{LB}}}_{\textsc{bm}}$&
$\mathrm{{{BP}}}_{\textsc{bm}}$&$\mathrm{{{LB}}}_{\textsc{s}}$&$\mathrm{{{BP}}}_{\textsc{s}}$
\vspace*{0.2cm}\\\hline
&& $1$&28.6& 28.6& 98.2& 98.3 & n.a.& n.a.\\
&& $2$&21.3 & 21.3 & 96.9 & 96.9& n.a.& n.a.\\
IV &$n=500$& $3$&13.6& 13.8& 97.5& 97.5& 15.4& 15.5
\\
 &&$4$&11.9& 12.1& 97.3& 97.5 &13.6& 13.6\\
 &&$5$&13.6& 13.6& 97.2& 97.1& 13.2& 13.2\\
 \cline{2-9}
&& $1$&65.6& 65.6& 99.7& 99.7 & n.a.& n.a.\\
&& $2$&63.1 & 63.1 & 99.5 & 99.5&  n.a.& n.a.\\
IV &$n=2,000$& $3$&56.7& 56.7& 98.5& 98.6& 48.4& 48.5\\
 &&$4$&50.9& 50.9 &99.0 &99.0 &49.8& 49.8\\
 &&$5$&47.7& 47.7 &98.4& 98.4& 47.5& 47.4\\
  \cline{2-9}
&& $1$&97.3&  97.3& 100.0 &100.0&  n.a.& n.a.\\
&& $2$&98.6 & 98.6& 100.0& 100.0&  n.a.& n.a.\\
IV &$n=10,000$& $3$&98.4 & 98.4& 100.0& 100.0& 100.0& 100.0\\
 &&$4$&98.9 & 98.9& 100.0& 100.0& 100.0& 100.0\\
 &&$5$&98.8 & 98.9&100.0 &100.0 &100.0 &100.0\\
 %&&$18$&100.0& 100.0 &100.0 &100.0& 100.0& 100.0\\

\hline
&& $1$&38.8& 38.8& 99.7& 99.7 & n.a.& n.a.\\
&& $2$&28.7 & 28.8 & 99.6 & 99.6  &  n.a.& n.a.\\
V &$n=500$& $3$&25.9& 25.9& 99.7& 99.7& 41.5& 41.6\\
 &&$4$&22.9& 23.2 &99.2& 99.2& 35.2& 35.2\\
 &&$5$&20.3& 20.4& 99.3& 99.3& 30.0& 30.3\\
 \cline{2-9}
&& $1$&88.8 & 88.8 &100.0 &100.0 & n.a.& n.a.\\
&& $2$&87.1 & 87.1 &100.0 &100.0 & n.a.& n.a.\\
V &$n=2,000$& $3$&85.4 & 85.4& 100.0& 100.0 & 99.2 & 99.2\\
 &&$4$& 80.1 & 80.1& 100.0& 100.0 & 98.3 & 98.3\\
 &&$5$&79.1&  79.1& 100.0& 100.0 & 97.0 & 97.0\\
  \cline{2-9}
&& $1$&100.0& 100.0& 100.0& 100.0& n.a.& n.a.\\
&& $2$&100.0& 100.0& 100.0& 100.0& n.a.& n.a.\\
V &$n=10,000$& $3$&99.9 & 99.9& 100.0& 100.0& 100.0& 100.0\\
 &&$4$&100.0& 100.0& 100.0& 100.0&100.0 &100.0\\
 &&$5$&100.0& 100.0& 100.0& 100.0&100.0 &100.0\\
\hline\hline \\
\multicolumn{9}{l}{IV: Weak VARMA$(2,1)$ model
(\ref{lutkepohlVARMA})-(\ref{multiPTcarre}).}\\
\multicolumn{9}{l}{V: Weak VARMA$(2,1)$ model
(\ref{lutkepohlVARMA})-(\ref{multiRT}).}\\
%\multicolumn{9}{l}{$\mathrm{\tilde{\mathbf{LB}}_{\textsc{sn}}}$ and  $\mathrm{{\mathbf{BP}}_{\textsc{sn}}}$ refer to modified test using the statistics $\tilde{\mathbf{Q}}_m^{\textsc{sn}}$ and ${\mathbf{Q}}_m^{\textsc{sn}}$.}
%\\
%\multicolumn{9}{l}{$\mathrm{\tilde{\mathbf{LB}}_{\textsc{bm}}}$ and $\mathrm{{\mathbf{BP}}_{\textsc{bm}}}$ refer to  LB and BP tests with $\tilde{\mathbf{Q}}_m^\textsc{h}$ and ${\mathbf{Q}}_m^\textsc{c}$ as in \cite{yac}.}\\
%\multicolumn{9}{l}{$\mathrm{\tilde{\mathbf{LB}}_{\textsc{s}}}$ and $\mathrm{{\mathbf{BP}}_{\textsc{s}}}$ refr to  LB and BP tests using the statistics $\tilde{\mathbf{Q}}_m^\textsc{h}$ and ${\mathbf{Q}}_m^\textsc{c}$.}
%\\
\end{tabular}
\end{center}
\label{size  adjusted-VARMA2}
\end{table}

\begin{table}[h!]
 \caption{\small{Empirical power (in \%) of the modified  and standard  versions
 of the LB and BP tests in the case of VARMA$(2,1)$ model.
The number of replications is $N=1000$. }}
\begin{center}
\begin{tabular}{ccc ccc ccc}
\hline\hline \\
Model& Length $n$ & Lag $m$ & $\mathrm{{{LB}}}_{\textsc{sn}}$&$\mathrm{{{BP}}}_{\textsc{sn}}$&$\mathrm{{{LB}}}_{\textsc{bm}}$&
$\mathrm{{{BP}}}_{\textsc{bm}}$&$\mathrm{{{LB}}}_{\textsc{s}}$&$\mathrm{{{BP}}}_{\textsc{s}}$
\vspace*{0.2cm}\\\hline
&& $1$&28.4 &28.6 &41.6& 41.4 &n.a. &n.a.\\
&& $2$&21.7&  21.8 & 31.3 & 30.3 &n.a. &n.a.\\
I &$n=500$& $3$&16.6& 16.8& 27.4& 26.9 &66.3& 66.0\\
 &&$4$&11.8& 11.9& 23.8& 23.4& 48.9 &48.3\\
 &&$5$&10.9 &11.1 &20.7 &20.0& 40.0 &38.8\\
 \cline{2-9}
&& $1$&83.6& 83.7& 94.2 &94.2&n.a. &n.a.\\
&& $2$&77.4 & 77.4 & 87.4 & 87.2&n.a. &n.a.\\
I &$n=2,000$& $3$&73.5& 73.6& 88.0& 88.0& 99.8& 99.8\\
 &&$4$&69.2& 69.3& 86.4& 86.4& 98.7& 98.7\\
 &&$5$& 64.5& 64.6& 85.6& 85.6& 97.7& 97.7\\
  \cline{2-9}
&& $1$&99.8 & 99.8&100.0 &100.0&n.a. &n.a.\\
&& $2$&99.9 & 99.9 &100.0& 100.0 &n.a. &n.a.\\
I &$n=10,000$& $3$&99.9 & 99.9 &100.0 &100.0& 100.0& 100.0\\
 &&$4$&100.0& 100.0 &99.9 & 99.9 & 100.0& 100.0\\
 &&$5$&100.0& 100.0 &100.0 &100.0& 100.0& 100.0\\
 %&&$18$&100.0& 100.0 &100.0 &100.0& 100.0& 100.0\\

\hline
&& $1$&20.5 &20.6& 37.4& 36.9 &n.a. &n.a.\\
&& $2$&14.4 & 14.4 & 27.4 & 26.7&n.a. &n.a.\\
II &$n=500$& $3$&9.2&  9.2& 21.0& 20.6& 74.6& 74.6\\
 &&$4$&6.0&  6.4& 15.7 &15.3& 62.6 &61.7\\
 &&$5$&3.8 & 4.0& 11.0& 10.8& 51.5& 50.3\\
 \cline{2-9}
&& $1$&60.8& 60.9& 89.7& 89.7&n.a. &n.a.\\
&& $2$&53.1 & 53.3 & 81.9 & 81.9 &n.a. &n.a.\\
II &$n=2,000$& $3$&45.3& 45.3& 79.4 &79.3& 99.0& 99.0\\
 &&$4$&41.3& 41.3& 77.2& 77.0& 97.3& 97.3\\
 &&$5$&35.4& 35.4 &73.4& 73.0 &96.2 &96.2
\\
  \cline{2-9}
&& $1$&98.8& 98.8 &99.5& 99.5 &n.a. &n.a.\\
&& $2$&98.9 & 98.9 & 99.4 & 99.4&n.a. &n.a.\\
II &$n=10,000$& $3$&98.8 & 98.8&  99.4 & 99.4& 100.0& 100.0\\
 &&$4$&98.6 & 98.6 & 99.3&  99.3& 100.0& 100.0\\
 &&$5$&98.1 & 98.1 & 99.4&  99.4& 100.0& 100.0\\
  \hline
&& $1$&17.6& 17.6& 22.9 &22.9 &n.a. &n.a.\\
&& $2$&8.3  & 8.3 & 15.2 & 14.9&n.a. &n.a.\\
III &$n=500$& $3$&2.4 & 2.4 & 7.4&  7.3& 83.2& 82.8\\
 &&$4$&0.9 & 0.9 & 3.7&  3.5& 71.9& 71.2\\
 &&$5$&0.3 & 0.3 & 3.3 & 3.2 &60.4 &59.4\\
 \cline{2-9}
&& $1$&62.8 &62.8& 78.8& 78.8&n.a. &n.a.\\
&& $2$&57.9  &58.0  &63.1 & 63.1&n.a. &n.a.\\
III &$n=2,000$& $3$&49.1& 49.1 &59.0& 58.9& 99.7& 99.7\\
 &&$4$&40.6& 40.7& 49.7& 49.6& 98.5 &98.5\\
 &&$5$&35.3& 35.2& 41.4& 41.3 &96.5 &96.5\\
  \cline{2-9}
&& $1$&99.0 & 99.0 &100.0 &100.0&n.a. &n.a.\\
&& $2$&99.7 & 99.7 & 99.9 & 99.9  &n.a. &n.a.\\
III &$n=10,000$& $3$&99.3 & 99.3 & 99.9 & 99.9 &100.0& 100.0\\
 &&$4$&99.6 & 99.6 & 99.9&  99.9& 100.0& 100.0\\
 &&$5$&99.1 & 99.1 & 99.9 & 99.9& 100.0 &100.0\\
\hline\hline \\
\multicolumn{9}{l}{I: Strong VARMA$(2,1)$ model
(\ref{lutkepohlVARMA})-(\ref{bruitfort}).}\\
\multicolumn{9}{l}{II: Weak VARMA$(2,1)$ model
(\ref{lutkepohlVARMA})-(\ref{ARCH1}).}\\
\multicolumn{9}{l}{III: Weak VARMA$(2,1)$ model
(\ref{lutkepohlVARMA})-(\ref{multiPT}).}\\
%\multicolumn{9}{l}{$\mathrm{\tilde{\mathbf{LB}}_{\textsc{sn}}}$ and  $\mathrm{{\mathbf{BP}}_{\textsc{sn}}}$ refer to modified test using the statistics $\tilde{\mathbf{Q}}_m^{\textsc{sn}}$ and ${\mathbf{Q}}_m^{\textsc{sn}}$.}
%\\
%\multicolumn{9}{l}{$\mathrm{\tilde{\mathbf{LB}}_{\textsc{bm}}}$ and $\mathrm{{\mathbf{BP}}_{\textsc{bm}}}$ refer to  LB and BP tests with $\tilde{\mathbf{Q}}_m^\textsc{h}$ and ${\mathbf{Q}}_m^\textsc{c}$ as in \cite{yac}.}\\
%\multicolumn{9}{l}{$\mathrm{\tilde{\mathbf{LB}}_{\textsc{s}}}$ and $\mathrm{{\mathbf{BP}}_{\textsc{s}}}$ refr to  LB and BP tests using the statistics $\tilde{\mathbf{Q}}_m^\textsc{h}$ and ${\mathbf{Q}}_m^\textsc{c}$.}
%\\
\end{tabular}
\end{center}
\label{raw-test1}
\end{table}

\begin{table}[h!]
 \caption{\small{Empirical power (in \%) of the modified  and standard  versions
 of the LB and BP tests in the case of VARMA$(2,1)$ model.
The number of replications is $N=1000$. }}
\begin{center}
\begin{tabular}{ccc ccc ccc}
\hline\hline \\
Model& Length $n$ & Lag $m$ & $\mathrm{{{LB}}}_{\textsc{sn}}$&$\mathrm{{{BP}}}_{\textsc{sn}}$&$\mathrm{{{LB}}}_{\textsc{bm}}$&
$\mathrm{{{BP}}}_{\textsc{bm}}$&$\mathrm{{{LB}}}_{\textsc{s}}$&$\mathrm{{{BP}}}_{\textsc{s}}$
\vspace*{0.2cm}\\\hline
&& $1$&15.6& 15.7& 22.6& 22.4& n.a.& n.a.\\
&& $2$&5.7  & 6.0 & 14.5 & 14.4& n.a.& n.a.\\
IV &$n=500$& $3$&1.7 & 1.8 & 9.6 & 9.6 &82.7& 82.2
\\
 &&$4$&0.4 & 0.4 &10.3& 10.1& 70.5& 70.0\\
 &&$5$&0.8 & 0.8 & 8.5 & 8.5& 61.8& 61.4\\
 \cline{2-9}
&& $1$&59.0& 59.2& 62.7 &62.7& n.a.& n.a.\\
&& $2$&54.9 & 54.9 & 44.5 & 44.5&  n.a.& n.a.\\
IV &$n=2,000$& $3$&46.1& 46.2& 36.7& 36.6 &99.3& 99.3\\
 &&$4$&35.5& 35.5& 29.3& 29.3& 98.5& 98.5\\
 &&$5$&25.5& 25.8& 23.3 &23.2 &96.5 &96.5\\
  \cline{2-9}
&& $1$&97.3 &97.3 &98.6 &98.6& n.a.& n.a.\\
&& $2$&97.8 & 97.8 & 96.9 & 96.9 & n.a.& n.a.\\
IV &$n=10,000$& $3$&97.9 & 97.9 & 96.5 & 96.5 &100.0 &100.0\\
 &&$4$&98.4 & 98.4 & 96.3 & 96.3 &100.0 &100.0\\
 &&$5$&97.4 & 97.4 & 96.1 & 96.1& 100.0& 100.0\\
 %&&$18$&100.0& 100.0 &100.0 &100.0& 100.0& 100.0\\

\hline
&& $1$&34.1& 34.1 &48.7 &48.6 & n.a.& n.a.\\
&& $2$&27.0 & 27.1 & 33.2 & 32.9& n.a.& n.a.\\
V &$n=500$& $3$&23.2& 23.3 &27.3& 26.5& 67.7 &67.3\\
 &&$4$&18.6& 19.0& 23.3& 23.0& 46.7& 45.9\\
 &&$5$&15.0& 15.3& 20.8& 20.2& 38.5& 37.1\\
 \cline{2-9}
&& $1$&91.2 &91.2& 98.9& 98.9& n.a.& n.a.\\
&& $2$&87.4 & 87.4  &94.7 & 94.7& n.a.& n.a.\\
V &$n=2,000$& $3$&84.1& 84.2& 93.8& 93.8& 99.9& 99.9\\
 &&$4$& 79.8& 79.8& 92.1& 92.1& 99.2& 99.2\\
 &&$5$&76.7& 76.8 &90.8 &90.8& 98.3& 98.3\\
  \cline{2-9}
&& $1$&100.0& 100.0& 100.0& 100.0& n.a.& n.a.\\
&& $2$&100.0& 100.0& 100.0& 100.0& n.a.& n.a.\\
V &$n=10,000$& $3$&99.9 & 99.9 & 100.0& 100.0& 100.0& 100.0\\
 &&$4$&100.0& 100.0& 100.0& 100.0&100.0 &100.0\\
 &&$5$&100.0& 100.0& 100.0& 100.0&100.0 &100.0\\
\hline\hline \\
\multicolumn{9}{l}{IV: Weak VARMA$(2,1)$ model
(\ref{lutkepohlVARMA})-(\ref{multiPTcarre}).}\\
\multicolumn{9}{l}{V: Weak VARMA$(2,1)$ model
(\ref{lutkepohlVARMA})-(\ref{multiRT}).}\\
%\multicolumn{9}{l}{$\mathrm{\tilde{\mathbf{LB}}_{\textsc{sn}}}$ and  $\mathrm{{\mathbf{BP}}_{\textsc{sn}}}$ refer to modified test using the statistics $\tilde{\mathbf{Q}}_m^{\textsc{sn}}$ and ${\mathbf{Q}}_m^{\textsc{sn}}$.}
%\\
%\multicolumn{9}{l}{$\mathrm{\tilde{\mathbf{LB}}_{\textsc{bm}}}$ and $\mathrm{{\mathbf{BP}}_{\textsc{bm}}}$ refer to  LB and BP tests with $\tilde{\mathbf{Q}}_m^\textsc{h}$ and ${\mathbf{Q}}_m^\textsc{c}$ as in \cite{yac}.}\\
%\multicolumn{9}{l}{$\mathrm{\tilde{\mathbf{LB}}_{\textsc{s}}}$ and $\mathrm{{\mathbf{BP}}_{\textsc{s}}}$ refr to  LB and BP tests using the statistics $\tilde{\mathbf{Q}}_m^\textsc{h}$ and ${\mathbf{Q}}_m^\textsc{c}$.}
%\\
\end{tabular}
\end{center}
\label{raw-test2}
\end{table}


\begin{thebibliography}{}

\bibitem[Andrews, 1991]{a_econ}
Andrews, D. W.~K. (1991).
\newblock Heteroskedasticity and autocorrelation consistent covariance matrix
  estimation.
\newblock {\em Econometrica}, 59(3):817--858.

\bibitem[Bauwens et~al., 2006]{blr2006}
Bauwens, L., Laurent, S., and Rombouts, J. V.~K. (2006).
\newblock Multivariate {GARCH} models: a survey.
\newblock {\em J. Appl. Econometrics}, 21(1):79--109.

\bibitem[Berk, 1974]{berk}
Berk, K.~N. (1974).
\newblock Consistent autoregressive spectral estimates.
\newblock {\em Ann. Statist.}, 2:489--502.
\newblock Collection of articles dedicated to Jerzy Neyman on his 80th
  birthday.

\bibitem[Boubacar~Mainassara, 2011]{yac}
Boubacar~Mainassara, Y. (2011).
\newblock Multivariate portmanteau test for structural {VARMA} models with
  uncorrelated but non-independent error terms.
\newblock {\em J. Statist. Plann. Inference}, 141(8):2961--2975.

\bibitem[Boubacar~Mainassara and Francq, 2011]{yac2}
Boubacar~Mainassara, Y. and Francq, C. (2011).
\newblock Estimating structural {VARMA} models with uncorrelated but
  non-independent error terms.
\newblock {\em J. Multivariate Anal.}, 102(3):496--505.

\bibitem[Box and Pierce, 1970]{bp70}
Box, G. E.~P. and Pierce, D.~A. (1970).
\newblock Distribution of residual autocorrelations in
  autoregressive-integrated moving average time series models.
\newblock {\em J. Amer. Statist. Assoc.}, 65:1509--1526.

\bibitem[Brockwell and Davis, 1991]{broc-d}
Brockwell, P.~J. and Davis, R.~A. (1991).
\newblock {\em Time series: theory and methods}.
\newblock Springer Series in Statistics. Springer-Verlag, New York, second
  edition.

\bibitem[Chitturi, 1974]{C1974}
Chitturi, R.~V. (1974).
\newblock Distribution of residual autocorrelations in multiple autoregressive
  schemes.
\newblock {\em J. Amer. Statist. Assoc.}, 69:928--934.

\bibitem[Davydov, 1968]{davy}
Davydov, J.~A. (1968).
\newblock The convergence of distributions which are generated by stationary
  random processes.
\newblock {\em Teor. Verojatnost. i Primenen.}, 13:730--737.

\bibitem[den Haan and Levin, 1997]{haan}
den Haan, W.~J. and Levin, A.~T. (1997).
\newblock A practitioner's guide to robust covariance matrix estimation.
\newblock In {\em Robust inference}, volume~15 of {\em Handbook of Statist.},
  pages 299--342. North-Holland, Amsterdam.

\bibitem[Dufour and Jouini, 2014]{DufourJouini2014}
Dufour, J.-M. and Jouini, T. (2014).
\newblock Asymptotic distributions for quasi-efficient estimators in echelon
  {VARMA} models.
\newblock {\em Comput. Statist. Data Anal.}, 73:69--86.

\bibitem[Francq et~al., 2005]{frz}
Francq, C., Roy, R., and Zako{\"{\i}}an, J.-M. (2005).
\newblock Diagnostic checking in {ARMA} models with uncorrelated errors.
\newblock {\em J. Amer. Statist. Assoc.}, 100(470):532--544.

\bibitem[Francq and Zako{\"{\i}}an, 1998]{fz98}
Francq, C. and Zako{\"{\i}}an, J.-M. (1998).
\newblock Estimating linear representations of nonlinear processes.
\newblock {\em J. Statist. Plann. Inference}, 68(1):145--165.

\bibitem[Francq and Zako{\"{\i}}an, 2000]{fz00}
Francq, C. and Zako{\"{\i}}an, J.-M. (2000).
\newblock Covariance matrix estimation for estimators of mixing weak {ARMA}
  models.
\newblock {\em J. Statist. Plann. Inference}, 83(2):369--394.

\bibitem[Francq and Zako{\"{\i}}an, 2005]{fz05}
Francq, C. and Zako{\"{\i}}an, J.-M. (2005).
\newblock Recent results for linear time series models with non independent
  innovations.
\newblock In {\em Statistical modeling and analysis for complex data problems},
  volume~1 of {\em GERAD 25th Anniv. Ser.}, pages 241--265. Springer, New York.

\bibitem[Francq and Zako\"{\i}an, 2010]{FZ2010}
Francq, C. and Zako\"{\i}an, J.-M. (2010).
\newblock {\em GARCH Models: Structure, Statistical Inference and Financial
  Applications}.
\newblock Wiley.

\bibitem[Hannan, 1976]{H1976}
Hannan, E.~J. (1976).
\newblock The identification and parametrization of {ARMAX} and state space
  forms.
\newblock {\em Econometrica}, 44(4):713--723.

\bibitem[Herrndorf, 1984]{herr}
Herrndorf, N. (1984).
\newblock A functional central limit theorem for weakly dependent sequences of
  random variables.
\newblock {\em Ann. Probab.}, 12(1):141--153.

\bibitem[Hong, 1996]{H1996}
Hong, Y. (1996).
\newblock Consistent testing for serial correlation of unknown form.
\newblock {\em Econometrica}, 64(4):837--864.

\bibitem[Hosking, 1980]{H1980}
Hosking, J. R.~M. (1980).
\newblock The multivariate portmanteau statistic.
\newblock {\em J. Amer. Statist. Assoc.}, 75(371):602--608.

\bibitem[Imhof, 1961]{i1961}
Imhof, J.~P. (1961).
\newblock Computing the distribution of quadratic forms in normal variables.
\newblock {\em Biometrika}, 48:419--426.

\bibitem[Jeantheau, 1998]{j1998}
Jeantheau, T. (1998).
\newblock Strong consistency of estimators for multivariate {ARCH} models.
\newblock {\em Econometric Theory}, 14(1):70--86.

\bibitem[Kuan and Lee, 2006]{kl2006}
Kuan, C.-M. and Lee, W.-M. (2006).
\newblock Robust {$M$} tests without consistent estimation of the asymptotic
  covariance matrix.
\newblock {\em J. Amer. Statist. Assoc.}, 101(475):1264--1275.

\bibitem[Ljung and Box, 1978]{lb}
Ljung, G.~M. and Box, G. E.~P. (1978).
\newblock On a measure of lack of fit in time series models.
\newblock {\em Biometrika}, 65(2):pp. 297--303.

\bibitem[Lobato, 2001]{lobato}
Lobato, I.~N. (2001).
\newblock Testing that a dependent process is uncorrelated.
\newblock {\em J. Amer. Statist. Assoc.}, 96(455):1066--1076.

\bibitem[Lobato et~al., 2001]{lobatoNS2001}
Lobato, I.~N., Nankervis, J.~C., and Savin, N.~E. (2001).
\newblock Testing for autocorrelation using a modified {B}ox-{P}ierce {Q} test.
\newblock {\em Inter. Econ. Review}, 42(1):187--205.

\bibitem[Lobato et~al., 2002]{lobatoNS2002}
Lobato, I.~N., Nankervis, J.~C., and Savin, N.~E. (2002).
\newblock Testing for zero autocorrelation in the presence of statistical
  dependence.
\newblock {\em Econ. Theory}, 18(3):730--743.

\bibitem[L{\"u}tkepohl, 2005]{L2005}
L{\"u}tkepohl, H. (2005).
\newblock {\em New introduction to multiple time series analysis}.
\newblock Springer-Verlag, Berlin.

\bibitem[M\'elard et~al., 2006]{MelardRoySaidi2006}
M\'elard, G., Roy, R., and Saidi, A. (2006).
\newblock Exact maximum likelihood estimation of structured or unit root
  multivariate time series models.
\newblock {\em Comput. Statist. Data Anal.}, 50(11):2958--2986.

\bibitem[Newey and West, 1987]{newey}
Newey, W.~K. and West, K.~D. (1987).
\newblock A simple, positive semidefinite, heteroskedasticity and
  autocorrelation consistent covariance matrix.
\newblock {\em Econometrica}, 55(3):703--708.

\bibitem[Reinsel, 1997]{reinsel97}
Reinsel, G.~C. (1997).
\newblock {\em Elements of multivariate time series analysis}.
\newblock Springer Series in Statistics. Springer-Verlag, New York, second
  edition.

\bibitem[Romano and Thombs, 1996]{rt1996}
Romano, J.~P. and Thombs, L.~A. (1996).
\newblock Inference for autocorrelations under weak assumptions.
\newblock {\em J. Amer. Statist. Assoc.}, 91(434):590--600.

\bibitem[Shao, 010a]{s2010JRSSBa}
Shao, X. (2010a).
\newblock A self-normalized approach to confidence interval construction in
  time series.
\newblock {\em J. R. Stat. Soc. Ser. B Stat. Methodol.}, 72(3):343--366.

\bibitem[Shao, 010b]{s2010JRSSBb}
Shao, X. (2010b).
\newblock Corrigendum: {A} self-normalized approach to confidence interval
  construction in time series.
\newblock {\em J. R. Stat. Soc. Ser. B Stat. Methodol.}, 72(5):695--696.

\bibitem[Shao, 2011]{s2011ET}
Shao, X. (2011).
\newblock Testing for white noise under unknown dependence and its applications
  to diagnostic checking for time series models.
\newblock {\em Econometric Theory}, 27(2):312--343.

\bibitem[Shao, 2012]{shaox}
Shao, X. (2012).
\newblock Parametric inference in stationary time series models with dependent
  errors.
\newblock {\em Scand. J. Stat.}, 39(4):772--783.

\bibitem[Shao, 2015]{s2016}
Shao, X. (2015).
\newblock Self-normalization for time series: a review of recent developments.
\newblock {\em J. Amer. Statist. Assoc.}, 110(512):1797--1817.

\bibitem[Zhu and Li, 2015]{zl2015JE}
Zhu, K. and Li, W.~K. (2015).
\newblock A bootstrapped spectral test for adequacy in weak {ARMA} models.
\newblock {\em J. Econometrics}, 187(1):113--130.

\end{thebibliography}
\end{document}